\newtheorem{theorem}{Theorem}[section]
\newtheorem{corollary}[theorem]{Corollary}
\newtheorem{lemma}[theorem]{Lemma}
\newtheorem{proposition}[theorem]{Proposition}
\theoremstyle{definition}
\newtheorem{definition}[theorem]{Definition}
\newtheorem{example}[theorem]{Example}
\newtheorem{remark}[theorem]{Remark}
\newcommand{\Z}{\mathbb{Z}}
\newcommand{\M}{\mathbb{M}}
\begin{document}

\title{Products and Intersections of Prime-Power Ideals in Leavitt Path Algebras}

\author{Zachary Mesyan and Kulumani M.\ Rangaswamy}

\date{}

\maketitle

\begin{abstract}
We continue a very fruitful line of inquiry into the multiplicative ideal theory of an arbitrary Leavitt path algebra $L$. Specifically, we show that factorizations of an ideal in $L$ into irredundant products or intersections of finitely many prime-power ideals are unique, provided that the ideals involved are powers of distinct prime ideals. We also characterize the completely irreducible ideals in $L$, which turn out to be prime-power ideals of a special type, as well as ideals that can be factored into products or intersections of finitely many completely irreducible ideals.

\medskip

\noindent
\emph{Keywords:} Leavitt path algebra, product of ideals, intersection of ideals, prime ideal, prime-power ideal, completely irreducible ideal

\noindent
\emph{2010 MSC numbers:} 16W10, 16D25, 16D70
\end{abstract}


\section{Introduction}

The ideal theory of the Leavitt path algebra $L=L_K(E)$ of a directed graph $E$ over a field $K$ has been an active area of research in recent years. In particular, a number of papers have been devoted to characterizing special types of ideals of $L$ in terms of graphical properties of $E$, and to describing the ideals of $L$ that can be factored into products of ideals of these types. More specifically prime, primitive, semiprime, and irreducible ideals have received such treatment in the literature--see \cite{ABR, AMR, ARRS, EEKRR, R-1, R-2}. An interesting feature of Leavitt path algebras is that, while they are highly noncommutative, multiplication of their ideals is commutative, and further, their ideals share a number of properties with ideals in various commutative rings, such as Dedekind domains (where ideals are projective), B\'{e}zout rings (where finitely-generated ideals are principal), arithmetical rings (where the ideal lattices are distributive), and Pr\"{u}fer domains (where the ideal lattices have special properties). Taken together, the aforementioned literature has produced a rich multiplicative ideal theory for Leavitt path algebras. Our goal in this paper is to contribute to this line of inquiry by examining ideals in Leavitt path algebras that can be factored into finite products and intersections of prime-power ideals and variants thereof.

After recalling the relevant background material (Section~\ref{prelim-section}),  in Section~\ref{prime-prod-section} we show that in any Leavitt path algebra $L$, factorizations of an ideal into irredundant products of prime-power ideals are unique, up to reordering the factors, provided that they are powers of different prime ideals (Theorem~\ref{Uniqueness}). A consequence of this is that factorizations of an ideal of $L$ into irredundant products of prime ideals are unique, up to reordering the factors (Corollary~\ref{PrimeUniqueness}). Then, in Section~\ref{prime-intersect-section}, we deduce that decompositions of an ideal of $L$ into irredundant intersections of finitely many prime-power ideals are likewise unique (Theorem~\ref{uniqueness-intersect}), using the fact that the product of any finite collection of powers of distinct prime ideals in $L$ is equal to its intersection (Proposition~\ref{prime-power-intersect}).

In Section~\ref{comp-irred-section} we classify the completely irreducible ideals in an arbitrary Leavitt path algebra, showing that they are prime-power ideals of a special sort. Recall that a proper ideal $I$ of a ring $R$ is \textit{irreducible} if $I$ is not the intersection of any two ideals of $R$ properly containing $I$, and $I$ is \textit{completely irreducible} if $I$ is not the intersection of any set of ideals properly containing $I$. For example, every prime ideal in any ring is clearly irreducible, but the zero ideal in $\Z$ is not completely irreducible despite being prime. Moreover, every maximal ideal in any ring is trivially completely irreducible. In the case of commutative rings, these ideals and variants thereof were investigated in a series of papers by Fuchs, Heinzer, Mosteig, and Olberding \cite{FHO-0, FHO, FM,HO}, leading to interesting factorization theorems. 

Irreducible ideals in Leavitt path algebras were described in \cite{R-2, AMR}, but completely irreducible ideals have not received much attention before. In Theorem~\ref{comp.Irreducible}, we classify the completely irreducible ideals of an arbitrary Leavitt path algebra $L$. Specifically, such ideals must either be powers of non-graded prime ideals, or be graded prime ideals of a special type. We then characterize the Leavitt path algebras $L$ where every proper ideal is completely irreducible (Theorem~\ref{everyidealcompirred}), where every completely irreducible ideal is graded (Proposition~\ref{Every comp.irred.graded}), and where every irreducible ideal is completely irreducible (Proposition~\ref{Irredcible = Comp Irred}). In each case we give equivalent conditions on $L$, as well as equivalent conditions on the graph $E$. For example, every proper ideal in $L$ completely irreducible exactly when every ideal of $L$ is graded, and the ideals of $L$ are well-ordered under set inclusion.

In the final Section~\ref{prod-comp-irred-section} we characterize the ideals in an arbitrary Leavitt path algebra that can be factored into products or intersections of finitely many completely irreducible ideals (Theorem~\ref{Intersection of completely irreducibles}), as well as the Leavitt path algebras where every proper ideal is the product or intersection of finitely many completely irreducible ideals (Theorem~\ref{all-prod-irred}).
Note that every proper ideal in any ring $R$ is the intersection of all the completely irreducible ideals containing it (see Proposition~\ref{Everyone is int comp.irr.}), and so only intersections of finitely many completely irreducible ideals are of interest. 

Graphical examples are constructed illustrating many of our results.

\section{Preliminaries}\label{prelim-section}

We begin with a review of the basic concepts and facts required. General Leavitt path algebra terminology and results can be found in~\cite{AAS}. For the reader familiar with these algebras, we suggest skipping the rest of this section and referring back as needed.

\subsection{Graphs} \label{graphs-section}

A \textit{directed graph} $E=(E^{0},E^{1},r,s)$ consists of two sets $E^{0}$ and $E^{1}$, where $E^0 \neq \emptyset$, together with functions $r,s:E^{1}\rightarrow E^{0}$, called \emph{range} and \emph{source}, respectively. The elements of $E^{0}$ are called \textit{vertices}, and the elements of $E^{1}$ are called \textit{edges}. We shall refer to directed graphs as simply ``graphs" from now on.

Let $E=(E^{0},E^{1},r,s)$ be a graph. A \emph{path} $\mu$ in $E$ is a finite sequence $e_1\cdots e_n$ of edges $e_1,\dots, e_n \in E^1$ such that $r(e_i)=s(e_{i+1})$ for $i\in \{1,\dots,n-1\}$. Here we define $s(\mu):=s(e_1)$ to be the \emph{source} of $\mu$, $r(\mu):=r(e_n)$ to be the \emph{range} of $\mu$, and $|\mu|:=n$ to be the \emph{length} of $\mu$. We view the elements of $E^0$ as paths of length $0$, and extend $s$ and $r$ to $E^0$ via $s(v) = v = r(v)$ for all $v\in E^0$. The set of all vertices on a path $\mu$ is denoted by $\{\mu^{0}\}$. A path $\mu = e_1\cdots e_{n}$ is \textit{closed} if $r(e_{n}) = s(e_{1})$. A closed path $\mu = e_1\cdots e_{n}$ is a \textit{cycle} if $s(e_{i})\neq s(e_{j})$ for all $i\neq j$. An \textit{exit} for a path $\mu = e_1\cdots e_{n}$ is an edge $f \in E^{1} \setminus \{e_1, \dots, e_n\}$ that satisfies $s(f) = s(e_{i})$ for some $i$. The graph $E$ is said to satisfy \textit{condition (L)} if every cycle in $E$ has an exit, and to satisfy \textit{condition (K)} if any vertex on a closed path $\mu$ is also the source of a closed path different from $\mu$ (i.e., one possessing a different set of edges). A cycle $\mu$ in $E$ is said to be \textit{without (K)} if no vertex along $\mu$ is the source of a different cycle in $E$.

Given a vertex $v \in E^{0}$, we say that $v$ is a \textit{sink} if $s^{-1}(v) = \emptyset$, that $v$ is \textit{regular} if $s^{-1}(v)$ is finite but nonempty, and that $v$ is an \textit{infinite emitter} if $s^{-1}(v)$ is infinite. A graph without infinite emitters is said to be
\textit{row-finite}. If $u,v \in E^0$, and there is a path $\mu$ in $E$ satisfying $s(\mu) = u$ and $r(\mu)=v$, then we write $u \geq v$. Given a vertex $v \in E^0$, we set $M(v) = \{w \in E^0 \mid w \geq v\}$. A nonempty subset $H$ of $E^{0}$ is said to be \textit{downward directed} if for any $u,v\in H$, there exists $w\in H$ such that $u\geq w$ and $v\geq w$. A subset $H$ of $E^0$ is \emph{hereditary} if whenever $u \in H$ and $u \geq v$ for some $v \in E^0$, then $v \in H$. Also $H \subseteq E^0$ is \emph{saturated} if $r(s^{-1}(v)) \subseteq H$ implies that $v \in H$ for any regular $v \in E^0$.  A nonempty subset $M$ of $E^0$ is a \emph{maximal tail} if it satisfies the following three conditions.

(MT1) If $v\in M$ and $u\in E^0$ are such that $u \geq v$, then $u\in M$.

(MT2) For every regular $v \in M$ there exists $e \in E^1$ such that $s(e)=v$ and $r(e) \in M$. 

(MT3) The set $M$ is downward directed.\\
For any $H \subseteq E^0$ it is easy to see that $H$ is hereditary if and only if $M=E^0\setminus H$ satisfies (MT1), and $H$ is saturated if and only if $M=E^0\setminus H$ satisfies (MT2).

\subsection{Leavitt Path Algebras}

Given a graph $E$ and a field $K$, the \textit{Leavitt path $K$-algebra} $L_K(E)$ \textit{of $E$} is the $K$-algebra generated by the set $\{v \mid v\in E^{0}\} \cup \{e,e^* \mid e\in E^{1}\}$, subject to the following relations:

\smallskip

{(V)} \ \ \ \  $vw = \delta_{v,w}v$ for all $v,w\in E^{0}$,

{(E1)} \ \ \ $s(e)e=er(e)=e$ for all $e\in E^{1}$,

{(E2)} \ \ \ $r(e)e^*=e^*s(e)=e^*$ for all $e\in E^{1}$,

{(CK1)} \ $e^*f=\delta _{e,f}r(e)$ for all $e,f\in E^{1}$, and

{(CK2)} \  $v=\sum_{e\in s^{-1}(v)} ee^*$ for all regular $v\in E^{0}$.   

\smallskip

\noindent Throughout this article, $K$ will denote an arbitrary field, $E$ will denote an arbitrary graph, and $L_K(E)$ will often be denoted simply by $L$. 

For all $v \in E^0$ we define $v^*:=v$, and for all paths $\mu  = e_1 \cdots e_n$ ($e_1, \dots, e_n \in E^1$) we set $\mu^*:=e_n^* \cdots e_1^*$, $r(\mu^*):=s(\mu)$, and $s(\mu^*):=r(\mu)$. It is easy to see that every element of $L_K(E)$ can be expressed in the form $\sum_{i=1}^n a_i\mu_i\nu_i^*$ for some $a_i \in K$ and paths $\mu_i,\nu_i$. We also note that while $L_K(E)$ is generally not unital, it has \textit{local units}. That is, for any finite subset $\{r_1, \dots, r_n\}$ of $L_K(E)$ there is an idempotent $u\in L_K(E)$ (which can be taken to be a sum of vertices in $E^{0}$) such that $ur_i=r_i=r_iu$ for all $1\leq i \leq n$.  

We also recall that every Leavitt path algebra $L_{K}(E)$ is $\mathbb{Z}$-graded (where $\mathbb{Z}$ denotes the group of  integers). Specifically, $L_{K} (E)=\bigoplus_{n\in\mathbb{Z}} L_{n}$, where 
\[
L_{n}=\bigg\{\sum_i a_{i}\mu_{i}\nu_{i}^*\in L_{K}(E) \Bigm| n = |\mu_{i}|-|\nu_{i}|\bigg\}.
\] 
Here the \emph{homogeneous components} $L_{n}$ are abelian subgroups satisfying $L_{m}L_{n}\subseteq L_{m+n}$ for all $m, n\in \mathbb{Z}$.  An ideal $I$ of $L_{K}(E)$ is said to be \emph{graded} if $I = \bigoplus_{n\in\mathbb{Z}} (I\cap L_{n})$. Equivalently, $I$ is \emph{graded} if $r_{i_{j}}\in I$ for all $j \in \{1,\dots, m\}$, whenever $r_{i_{1}}+\cdots+r_{i_{m}} \in I$, with $i_1, \dots, i_m$ distinct and $r_{i_{j}}\in L_{i_{j}}$ for each $j$. 

\subsection{Ideals in Leavitt Path Algebras} \label{LPAidealSect}

Next we record various results from the literature and basic observations about ideals in Leavitt path algebras that will be used frequently in the paper.

Given a graph $E$, a \emph{breaking vertex} of a hereditary saturated subset $H$ of  $E^0$ is an infinite emitter $v\in E^{0}\backslash H$ with the property that $0<|s^{-1}(v)\cap r^{-1}(E^{0}\backslash H)|<\aleph_0$. The set of all breaking vertices of $H$ is denoted by $B_{H}$. For each $v\in B_{H}$, we set $v^{H} := v-\sum_{s(e)=v, \, r(e) \notin H}ee^*$. Given a hereditary saturated $H \subseteq E^0$ and $S\subseteq B_{H}$, we say that $(H,S)$ is an \emph{admissible pair}, and the ideal of $L_K(E)$ generated by $H\cup \{v^{H} \mid v\in S\}$ is denoted by $I(H,S)$. The graded ideals of $L_{K}(E)$ are precisely the ideals of the form $I(H,S)$ \cite[Theorem 2.5.8]{AAS}. Moreover, setting $(H_1,S_1) \leq (H_2,S_2)$ whenever $H_1 \subseteq H_2$ and $S_1 \subseteq H_2 \cup S_2$, defines a partial order on the set of all admissible pairs of $L_K(E)$. The map $(H,S) \mapsto I(H,S)$ gives a one-to-one order-preserving correspondence between the partially ordered set of admissible pairs and the set of all graded ideals of $L_K(E)$, ordered by inclusion. Finally, for any admissible pair $(H,S)$ we have $L_{K}(E)/I(H,S)\cong L_{K} (E\backslash(H,S))$, via a graded isomorphism (i.e., one that takes each homogeneous component in one ring to the corresponding homogeneous component in the other ring) \cite[Theorem 2.4.15]{AAS}. Here $E\backslash(H,S)$ is a \emph{quotient graph of} $E$, where \[(E\backslash(H,S))^{0}=(E^{0}\backslash H)\cup\{v' \mid v\in B_{H} \backslash S\}\]
and
\[(E\backslash(H,S))^{1}=\{e\in E^{1} \mid r(e)\notin H\}\cup\{e' \mid e\in E^{1} \text{ with } r(e)\in B_{H}\backslash S\},\]
and $r,s$ are extended to $E\backslash(H,S)$ by setting $s(e^{\prime}) = s(e)$ and $r(e')=r(e)'$. (We note that $(E\backslash(H,B_{H}))^{0} = E^{0} \backslash H$ for any hereditary saturated $H$.) More specifically, the aforementioned isomorphism $L_{K}(E)/I(H,S)\cong L_{K} (E\backslash(H,S))$ preserves the vertices in $E^{0}\backslash H$ and edges $e \in E^1$ whose ranges are not in $H$, along with the corresponding ghost edges $e^*$.

\begin{theorem}\label{arbitrary ideals}
Let $L=L_K(E)$ be a Leavitt path algebra, and let $I$ be an ideal of $L$, with $H=I\cap E^{0}$ and $S=\{v\in B_{H} \mid v^{H}\in I\}$. 
\begin{enumerate}
\item[$(1)$] \cite[Theorem 4]{R_0} \ $I=I(H,S)+\sum_{i\in Y} \langle f_{i}(c_{i})\rangle$ where $Y$ is a possibly empty index set; each $c_{i}$ is a cycle without exits in $E\backslash(H,S)$; and each $f_{i}(x)\in K[x]$ is a polynomial with a nonzero constant term, which is of smallest degree such that $f_{i}(c_{i})\in I$.

\item[$(2)$] \cite[Proposition 2.4.7]{AAS} \ Using the notation of $\, (1)$, we have $I/I(H,S)=\bigoplus_{i\in Y} \langle f_{i}(c_{i})\rangle\subseteq \bigoplus_{i\in Y} \langle \{c_{i}^{0}\} \rangle$, where $ \langle \{c_{i}^{0}\} \rangle$ is the ideal of $L/I(H,S)$ generated by $\, \{c_{i}^{0}\}$.

\item[$(3)$] \cite[Corollary 2.4.16]{AAS} \ $I(H,S) \cap E^{0} = H = \langle H \rangle \cap E^{0}$, where $\langle H \rangle$ is the ideal of $L$ generated by $H$.

\item[$(4)$] \cite[Corollary 2.8.17]{AAS} \ For any ideal $J$ of $L$, we have $IJ=JI$.
\end{enumerate}
\end{theorem}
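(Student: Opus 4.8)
Each of the four parts is quoted verbatim from the literature, so strictly speaking the ``proof'' is a pointer to the cited sources; nevertheless, here is how I would organise and, where useful, reconstruct the arguments. I would begin with part (3), which is the most elementary. The inclusions $H\subseteq I(H,S)\cap E^0$ and $H\subseteq\langle H\rangle\cap E^0$ are immediate. For the reverse inclusions, invoke the graded isomorphism $L/I(H,S)\cong L_K(E\setminus(H,S))$, which sends each vertex $w\in E^0\setminus H$ to the corresponding (nonzero) vertex of the quotient graph. Since every vertex of a Leavitt path algebra is a nonzero idempotent, $w+I(H,S)\neq 0$, whence $w\notin I(H,S)$; thus $I(H,S)\cap E^0\subseteq H$. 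As $\langle H\rangle=I(H,\emptyset)\subseteq I(H,S)$, the corresponding claim for $\langle H\rangle$ follows at once.

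For parts (1) and (2) I would pass to $\bar L:=L/I(H,S)\cong L_K(E\setminus(H,S))$ and study the image $\bar I$ of $I$. The choice $H=I\cap E^0$ and $S=\{v\in B_H\mid v^H\in I\}$ is made precisely so that $\bar I$ meets the vertex set of the quotient graph trivially and contains no nonzero graded ideal; the structural content is then that any such ideal is the direct sum $\bigoplus_{i\in Y}\langle f_i(c_i)\rangle$ over the cycles without exits $c_i$ of $E\setminus(H,S)$. The mechanism behind this is that a nonzero element of an ideal meeting the vertex set trivially is forced to have support concentrated on cycles without exits, and the corner algebra at $\{c_i^0\}$ is isomorphic to a matrix ring over $K[x,x^{-1}]$, inside which ideals are governed by a single polynomial; the requirement that $f_i$ have nonzero constant term and minimal degree records the data of the corresponding local ideal. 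Pulling this back along the quotient map yields (1), and reading off the direct-sum decomposition, together with the containment $\langle f_i(c_i)\rangle\subseteq\langle\{c_i^0\}\rangle$ in $\bar L$, yields (2).

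For part (4) I would reduce the verification of $IJ=JI$ to checking that $ab\in JI$ whenever $a,b$ range over the algebra generators $E^0\cup E^1\cup(E^1)^*$, and then use (1): for a graded ideal $I(H,S)$ one gets $I(H,S)J=JI(H,S)$ directly from the generating set $H\cup\{v^H\mid v\in S\}$, while the extra generators $f_i(c_i)$ are supported on cycles without exits and so commute with the generators up to the congruences already available, so that $IJ$ and $JI$ have the same graded part and the same polynomial data over each cycle without exits.

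The genuinely substantial step is the structural claim underlying (1): after factoring out the largest graded subideal, the remainder is exactly a sum of ideals generated by polynomials with nonzero constant term, of least degree, evaluated at cycles without exits. This demands the full quotient-graph apparatus together with a careful analysis of the corner algebras at such cycles; the remaining parts are essentially bookkeeping built on top of it.
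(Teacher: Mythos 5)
This theorem is stated in the paper purely as a compilation of results from the literature, with citations to \cite{R_0} and \cite{AAS} in place of a proof, so there is no in-paper argument to compare against; your proposal is essentially a reconstruction of the cited proofs. Your treatments of parts (1)--(3) are faithful sketches of the standard arguments: part (3) via the graded isomorphism $L/I(H,S)\cong L_K(E\setminus(H,S))$ and the nonvanishing of vertices, and parts (1)--(2) via passage to the quotient, the fact that an ideal meeting $(E\setminus(H,S))^0$ trivially must be supported on cycles without exits, and the identification of the corner at such a cycle with $K[x,x^{-1}]$ (equivalently, of $\langle\{c^0\}\rangle$ with a matrix ring over $K[x,x^{-1}]$, as in Lemma~\ref{cycle-ideal-lemma}(1)). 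These match the structure of the proofs in the sources.

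Your sketch of part (4), however, has a genuine gap. The proposed reduction ``check that $ab\in JI$ whenever $a,b$ range over the algebra generators $E^0\cup E^1\cup(E^1)^*$'' does not make sense: those elements generally lie in neither $I$ nor $J$, so products of them say nothing about $IJ$ versus $JI$; at best one could reduce to elements of the form $xay$ with $a$ an ideal generator of $I$ and $x,y\in L$, which is a different and harder statement. Moreover, the two facts you then invoke are each nontrivial and are not established by your sketch: that $I(H,S)J=JI(H,S)$ for a graded ideal is usually proved by showing $I(H,S)J=I(H,S)\cap J$ (using that a graded ideal is itself a ring with local units, so $I\cap J\subseteq I(I\cap J)\subseteq IJ$), and the symmetric statement for the non-graded parts rests on the specific identities $\langle f(c)\rangle\langle g(c)\rangle=\langle f(c)g(c)\rangle$ and $\langle f(c)\rangle\langle g(d)\rangle=\{0\}$ for distinct exit-free cycles $c\neq d$ (Lemma~\ref{Product} in the paper), not on a generic claim that the generators ``commute up to the congruences already available.'' As written, part (4) of your argument would not go through without these ingredients being supplied explicitly.
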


For an ideal $I$ of $L$, expressed as in Theorem \ref{arbitrary ideals}(1), we refer to $I(H,S)$, also denoted $\mathrm{gr}(I)$, as the \textit{graded part} of $I$.

\begin{lemma} \label{Product=Intersection} \cite[Lemma 3.1]{R-2}
Let $I$ be a graded ideal of a Leavitt path algebra $L$.
\begin{enumerate}
\item[$(1)$] $IJ=I\cap J$ for any ideal $J$ of $L$. In particular, for any ideal $J \subseteq I$, we have $IJ=J$.

\item[$(2)$] Let $I_{1},\dots, I_{n}$ be ideals of $L$, for some positive integer $n$. Then $I=I_{1}\cdots I_{n}$ if and only if $I=I_{1}\cap\cdots\cap I_{n}$.
\end{enumerate}
\end{lemma}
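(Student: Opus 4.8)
The plan is to prove~(1) from scratch and then deduce~(2) from it. For~(1), the inclusion $IJ\subseteq I\cap J$ holds for any two-sided ideals $I,J$ (a product of ideals lies in each factor). For the reverse inclusion I would invoke the standard fact that a graded ideal of $L$ has local units: by Theorem~\ref{arbitrary ideals}(1) such an ideal is of the form $I(H,S)$, generated by the idempotents in $H\cup\{v^{H}\mid v\in S\}$, and is itself a ring with local units (cf.~\cite{AAS}). Hence, given $x\in I\cap J$, we may choose an idempotent $u\in I$ with $ux=x$, and then $x=ux\in IJ$ since $u\in I$ and $x\in J$. This yields $I\cap J=IJ$, and the ``in particular'' clause is just the case $J\subseteq I$, in which $I\cap J=J$.

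For~(2), I would first note that taking $J=I$ in part~(1) gives $I^{2}=I$, hence $I^{m}=I$ for every $m\geq 1$ (and by Theorem~\ref{arbitrary ideals}(4) the product $I_{1}\cdots I_{n}$ is independent of the order of the factors, so the statement is meaningful). Suppose first that $I=I_{1}\cap\cdots\cap I_{n}$. Then $I\subseteq I_{j}$ for each $j$, so by monotonicity of ideal multiplication $I=I^{n}\subseteq I_{1}I_{2}\cdots I_{n}$; since always $I_{1}\cdots I_{n}\subseteq I_{1}\cap\cdots\cap I_{n}=I$, we obtain $I=I_{1}\cdots I_{n}$. Conversely, suppose $I=I_{1}\cdots I_{n}$. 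Then $I=I_{1}\cdots I_{n}\subseteq I_{j}$ for each $j$, so $I\subseteq J:=I_{1}\cap\cdots\cap I_{n}$; on the other hand $J\subseteq I_{j}$ for every $j$, whence $J^{n}\subseteq I_{1}\cdots I_{n}=I$. Since $I$ is graded, $L/I\cong L_{K}(E\backslash(H,S))$ is again a Leavitt path algebra, hence semiprime, so it has no nonzero nilpotent ideal; as $(J/I)^{n}\subseteq I/I=0$, it follows that $J/I=0$, i.e.\ $J=I$. Therefore $I_{1}\cap\cdots\cap I_{n}=I$.

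The step I expect to be the real obstacle — and the only place where the gradedness of $I$ is used essentially — is the last one: one needs that $L/I$ (equivalently, that $I$ itself) is semiprime, so that the nilpotent ideal $(I_{1}\cap\cdots\cap I_{n})/I$ of $L/I$ is forced to vanish. (Without gradedness this fails: in $L=K[x,x^{-1}]$ one has $(x-1)^{2}=(x-1)^{2}\cap(x-1)$ but $(x-1)^{2}\neq(x-1)^{2}(x-1)$.) Everything else is routine bookkeeping with inclusions, local units, and the commutativity of ideal multiplication.
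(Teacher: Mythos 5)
The paper gives no proof of this lemma; it is imported verbatim from \cite[Lemma 3.1]{R-2}. Your argument is correct and, for part (1), is exactly the standard one used there: a graded ideal $I(H,S)$ is itself a Leavitt path algebra (of a ``hedgehog'' graph) and hence a ring with local units, so any $x\in I\cap J$ satisfies $x=ux\in IJ$ for a suitable idempotent $u\in I$. Your part (2) is also sound: the forward implication from the intersection uses $I^{n}=I$ (which follows from part (1)), and the reverse implication correctly reduces to the fact that $L/I\cong L_K(E\setminus(H,S))$ is semiprime (indeed semiprimitive, by \cite[Proposition 2.3.2]{AAS}, which the paper itself invokes elsewhere), so the nilpotent ideal $(I_1\cap\cdots\cap I_n)/I$ must vanish. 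Your closing remark correctly identifies gradedness of $I$ as the essential hypothesis, and the $K[x,x^{-1}]$ example is apt. No gaps.
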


\begin{lemma}\label{Product} 
Let $L=L_K(E)$ be a Leavitt path algebra, let $c$ and $d$ be distinct cycles in $E$ (that is, $c$ and $d$ have distinct sets of edges) without exits, and let $f(x), g(x)\in K[x]$ be polynomials with nonzero constant terms.
\begin{enumerate}
\item[$(1)$] \cite[Lemma 3.3]{R-2} \ $\langle f(c) \rangle \langle g(c) \rangle = \langle f(c)g(c) \rangle$.  

\item[$(2)$] \cite[Lemma 2.2(2)]{AMR} \ $\langle f(c)\rangle \langle g(d)\rangle = \{0\}$.
\end{enumerate}
\end{lemma}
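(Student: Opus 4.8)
The plan is to reduce both identities to computations inside the ``corner'' $vL_K(E)v$, where $v$ is the base vertex of the relevant cycle. The one structural fact I would use at the outset is this: if $\gamma$ is a cycle without exits in $E$ and $v=s(\gamma)$, then every vertex of $\gamma$ is regular and emits a single edge, which lies on $\gamma$; consequently $\{\gamma^{0}\}$ is hereditary, no path with source on $\gamma$ ever leaves $\{\gamma^{0}\}$, and --- using (E1), (E2), (CK1), (CK2) --- one gets $\gamma\gamma^{*}=v=\gamma^{*}\gamma$ and $vL_K(E)v=\bigoplus_{k\in\Z}K\gamma^{(k)}$, where $\gamma^{(k)}:=\gamma^{k}$ for $k\ge 0$ and $\gamma^{(k)}:=(\gamma^{*})^{-k}$ for $k<0$; in other words $vL_K(E)v\cong K[x,x^{-1}]$ with $v\leftrightarrow 1$, $\gamma\leftrightarrow x$. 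In particular $f(\gamma)=vf(\gamma)v$ lies in this commutative subring.

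For part~(1): one inclusion is free, since $f(c)g(c)=f(c)\cdot g(c)\in\langle f(c)\rangle\langle g(c)\rangle$ forces $\langle f(c)g(c)\rangle\subseteq\langle f(c)\rangle\langle g(c)\rangle$. For the reverse inclusion, I would take a generating product $a\,f(c)\,b\,a'\,g(c)\,b'$ (with $a,b,a',b'\in L$), use $f(c)=f(c)v$ and $g(c)=vg(c)$ (where $v=s(c)$) to rewrite it as $a\,f(c)\,(v\,b\,a'\,v)\,g(c)\,b'$, expand $v\,b\,a'\,v=\sum_{k}\lambda_{k}c^{(k)}$ inside $vLv$, and then invoke commutativity of $vLv$ to get $f(c)\,c^{(k)}\,g(c)=\bigl(f(c)g(c)\bigr)c^{(k)}\in\langle f(c)g(c)\rangle$. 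Summing over $k$, and then over the generating products, shows $\langle f(c)\rangle\langle g(c)\rangle\subseteq\langle f(c)g(c)\rangle$. (Alternatively, one could identify the graded ideal $\langle\{c^{0}\}\rangle$ with a matrix ring over $K[x,x^{-1}]$ and read the statement off from the ideal theory of $K[x,x^{-1}]$, but the direct computation is shorter.)

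For part~(2): the heart of the matter is that distinct cycles without exits have disjoint vertex sets, i.e.\ $\{c^{0}\}\cap\{d^{0}\}=\emptyset$. I would prove this by contradiction: if $v$ lay on both, then (after cyclically rotating $c$ and $d$ to start at $v$, which does not alter their edge sets) the unique edge emitted by $v$ is at once the first edge of $c$ and the first edge of $d$, so these coincide; marching along the common edges and repeating the argument shows the two cyclic edge sequences agree, and comparing their periods --- using that the sources along a cycle are pairwise distinct --- forces $c=d$. Granting disjointness, put $v=s(c)$, $w=s(d)$. For a spanning monomial $\mu\nu^{*}$ of $L$ with $v\,\mu\nu^{*}\,w\ne 0$ one must have $s(\mu)=v$ and $s(\nu)=w$, hence $r(\mu)\in\{c^{0}\}$ and $r(\nu)\in\{d^{0}\}$ (paths do not leave their cycles); then $\mu\nu^{*}=\mu\,r(\mu)\,r(\nu)\,\nu^{*}=0$ because $r(\mu)\ne r(\nu)$. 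Thus $vLw=0$, so for any $x\in L$ we get $f(c)\,x\,g(d)=f(c)\,(vxw)\,g(d)=0$; since every generating product of $\langle f(c)\rangle\langle g(d)\rangle$ has the form $a\,f(c)\,(ba')\,g(d)\,b'=a\cdot 0\cdot b'$, we conclude $\langle f(c)\rangle\langle g(d)\rangle=\{0\}$.

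The part requiring the most care is the opening structural claim --- establishing $vLv\cong K[x,x^{-1}]$, equivalently that the only paths among the vertices of an exit-free cycle are powers of the cycle and that $\gamma\gamma^{*}$ collapses to $v$ --- together with, for part~(2), the vertex-disjointness of distinct exit-free cycles; once these are in hand, everything else is bookkeeping with idempotents and the commutativity of the corner. A minor point to watch is that $L$ is not unital, so the ``insertions'' of $v$ above must be read as the genuine equalities $f(c)=f(c)v=vf(c)$ etc., not as unitality, and the generating products of an ideal should be handled via local units (which is legitimate here precisely because $f(c)=vf(c)v$).
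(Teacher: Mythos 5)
The paper does not actually prove this lemma: both parts are quoted from the literature (\cite[Lemma 3.3]{R-2} and \cite[Lemma 2.2(2)]{AMR}), so there is no in-paper argument to compare against. Your proof is correct and self-contained, and it is consistent with the toolkit the paper relies on: the structural fact you flag as ``requiring the most care'' --- that for a cycle $c$ without exits with $v=s(c)$ one has $vLv\cong K[x,x^{-1}]$ with $v\mapsto 1$, $c\mapsto x$, $c^{*}\mapsto x^{-1}$ --- is exactly \cite[Lemma 2.2.7]{AAS}, which this paper itself invokes in the proof of Lemma~\ref{PrimeUniqueLemma}, so you could simply cite it rather than re-derive it. The cited sources (and the surrounding machinery in this paper, e.g.\ Lemma~\ref{cycle-ideal-lemma}(1) and Proposition~\ref{Morita equivalence}) tend to argue via the identification $\langle\{c^{0}\}\rangle\cong\M_{Y}(K[x,x^{-1}])$ and the ideal-lattice correspondence --- the alternative you mention parenthetically --- whereas your direct computation in the corner $vLv$ is more elementary and avoids the matrix-ring detour. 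All the delicate points check out: in a cycle without exits every vertex emits exactly one edge, which gives both the hereditarity of $\{c^{0}\}$ (so paths starting on $c$ stay on $c$), the collapse $cc^{*}=v$ via (CK2), the vertex-disjointness of distinct exit-free cycles, and hence $vLw=0$ in part (2); and your handling of local units (reading $f(c)=vf(c)v$ as a genuine identity rather than appealing to a unit) is the right way to reduce generating products of the ideal product to the form $a\,f(c)\,(vba'v)\,g(c)\,b'$.
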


\begin{lemma} \label{cycle-ideal-lemma} 
Let $L=L_K(E)$ be a Leavitt path algebra and $c$ a cycle without exits in $E$.
\begin{enumerate}
\item[$(1)$] \cite[Lemma 2.7.1]{AAS} \ $\langle \{c^{0}\}\rangle \cong \M_{Y}(K[x,x^{-1}])$ for some index set $Y$.

\item[$(2)$] \cite[Lemma 3.5]{R-1} \ If $E^0$ is downward directed, and $I$ is a nonzero ideal of $L$ containing no vertices, then $I=\langle f(c) \rangle \subseteq \langle \{c^{0}\}\rangle$, for some $f(x)\in K[x]$ with a nonzero constant term.

\item[$(3)$] If $E^0$ is downward directed, and $I$ is an ideal of $L$ containing a vertex, then $\langle \{c^{0}\}\rangle \subseteq I$.
\end{enumerate}
\end{lemma}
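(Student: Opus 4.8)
The plan is to reduce the statement to showing that $I$ contains at least one vertex lying on the cycle $c$; once we know some $w \in \{c^{0}\}$ belongs to $I$, walking around $c$ will force all of $\{c^{0}\}$ into $I$. Concretely, write $c = e_{1}\cdots e_{n}$, so that its vertices are the (pairwise distinct) $s(e_{i})$. If $s(e_{i})\in I$, then from (E1) and (CK1) we have $r(e_{i}) = e_{i}^{*}e_{i} = e_{i}^{*}\bigl(s(e_{i})e_{i}\bigr)\in I$, and since $r(e_{i}) = s(e_{i+1})$ (indices mod $n$, as $c$ is closed), iterating shows $\{c^{0}\}\subseteq I$, whence $\langle\{c^{0}\}\rangle\subseteq I$ because $I$ is an ideal.

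To produce such a vertex, I would take any $u\in I\cap E^{0}$ (which exists by hypothesis) and set $v_{0} := s(c)$. Since $E^{0}$ is downward directed, there is $w\in E^{0}$ with $u\geq w$ and $v_{0}\geq w$. From $u\geq w$ and $u\in I$ we get $w\in I$: if $\mu$ is a path with $s(\mu)=u$ and $r(\mu)=w$, then $w = r(\mu) = \mu^{*}\mu = \mu^{*}u\mu \in I$. On the other hand, because $c$ has no exit and its vertices $s(e_{i})$ are pairwise distinct, the only edge emitted by each vertex of $c$ is the next edge of $c$; hence every path starting at $v_{0}=s(c)$ is forced to run along $c$, and so its range lies on $c$. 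Applying this to a path realizing $v_{0}\geq w$ gives $w\in\{c^{0}\}$. Thus $w\in I\cap\{c^{0}\}$, and the reduction in the first paragraph completes the argument.

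The proof is short, and the single point that genuinely uses the hypotheses is the ``forced path'' observation: it needs both that $c$ has no exits (so no vertex of $c$ emits a stray edge) and that $c$ is a cycle (so the sources $s(e_{i})$ are distinct, making the unique emitted edge unambiguous). Downward directedness enters only to link the given vertex of $I$ to a vertex on $c$ — without it the conclusion fails, e.g.\ when $c$ sits in a separate component from a vertex generating a proper ideal. All remaining steps are routine manipulations with the defining relations of $L$.
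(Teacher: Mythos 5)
Your proof is correct and follows essentially the same route as the paper's: both use downward directedness together with the no-exit hypothesis to force a vertex of $I$ down onto the cycle, and then propagate membership around $c$. The only cosmetic difference is that you verify the hereditariness of $I\cap E^{0}$ by hand via the relations (E1), (CK1), where the paper simply cites that fact.
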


\begin{proof}[Proof of (3).]
First note that $E^0$ being downward directed, and $c$ having no exits, means that $u \geq s(c)$ for all $u \in E^0$. So if an ideal $I$ of $L$ contains a vertex $u$, then $I\cap E^{0}$ being hereditary (see Theorem~\ref{arbitrary ideals}) implies that $s(c) \in I$. It follows that $\langle \{c^{0}\}\rangle \subseteq I$.
\end{proof}

\begin{remark} \label{cycle-ideal-remark} 
We note, for future reference, that in the isomorphism $\langle \{c^{0}\}\rangle \cong \M_{Y}(K[x,x^{-1}])$ constructed in the proof of \cite[Lemma 2.7.1]{AAS}, $c \in \langle \{c^{0}\}\rangle$ is sent to a matrix in $\M_{Y}(K[x,x^{-1}])$ which has $x$ as one of the entries and zeros elsewhere.
\end{remark}

\begin{theorem} \label{specificprime}  \cite[Theorem 3.12]{R-1}
Let $L=L_K(E)$ be a Leavitt path algebra, let $I$ be a proper ideal if $L$, and let $H = I \cap E^0$. Then $I$ is a prime ideal if and only if $I$ satisfies one of the following conditions.
\begin{enumerate}
\item[$(1)$] $I = I(H, B_{H})$, and $E^0\setminus H$ is downward directed. 
\item[$(2)$] $I = I(H, B_{H}\setminus \{u\})$ for some $u \in B_H$, and $E^0\setminus H = M(u)$.
\item[$(3)$] $I = I(H, B_{H}) + \langle f(c) \rangle$ where $c$ is a cycle without (K), $E^0\setminus H = M(s(c))$, and $f(x) \in K[x, x^{-1}]$ is an irreducible polynomial.
\end{enumerate}
\end{theorem}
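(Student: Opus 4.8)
The plan is to combine the structure theorem for ideals (Theorem~\ref{arbitrary ideals}(1)) with the quotient identification $L/I(H,S)\cong L_K(E\setminus(H,S))$, reducing the primeness of $I$ to the primeness of a vertex-free ideal in a quotient Leavitt path algebra, and then to invoke the standard fact that $L_K(F)$ is prime if and only if $F^0$ is downward directed. (The latter follows readily from the material above: distinct vertices $u,v$ of $F$ satisfy $\langle u\rangle\langle v\rangle\ne\{0\}$ exactly when they share a common descendant, and when $F^0$ is downward directed every nonzero ideal meets $\langle\{c^0\}\rangle\cong\M_Y(K[x,x^{-1}])$ in a nonzero ideal by Lemma~\ref{cycle-ideal-lemma}(2),(3), while a matrix ring over a domain is prime.)

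Write $I=I(H,S)+\sum_{i\in Y}\langle f_i(c_i)\rangle$ as in Theorem~\ref{arbitrary ideals}(1), put $\bar E=E\setminus(H,S)$, $\bar L=L/I(H,S)\cong L_K(\bar E)$, and $\bar I=I/I(H,S)=\bigoplus_{i\in Y}\langle f_i(c_i)\rangle$; by construction $\bar I$ contains no vertex of $\bar E$, and $I$ is prime in $L$ iff $\bar I$ is prime in $\bar L$, since $L/I\cong\bar L/\bar I$. If $Y=\emptyset$ then $I=I(H,S)$ is graded and $I$ is prime iff $\bar L$ is prime, i.e. $\bar E^0=(E^0\setminus H)\cup\{v'\mid v\in B_H\setminus S\}$ is downward directed. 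Each $v'$ is a sink of $\bar E$, and distinct such sinks share no common descendant, so $|B_H\setminus S|\le 1$. If $B_H\setminus S=\emptyset$ this is case~(1). If $B_H\setminus S=\{u\}$, then, as the surviving edges into $u$ induce edges into the sink $u'$, downward directedness of $(E^0\setminus H)\cup\{u'\}$ is equivalent to every vertex of $E^0\setminus H$ reaching $u$; since $M(u)\subseteq E^0\setminus H$ ($H$ being hereditary), this says $E^0\setminus H=M(u)$, which is case~(2).

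Now suppose $Y\ne\emptyset$; the crux is to show $\bar E^0$ is downward directed. Fix $i_0\in Y$ and $w_0=s(c_{i_0})$; if some $a\in\bar E^0$ had $a\not\ge w_0$, let $H_a$ be the saturated hereditary closure of the descendant set of $a$. Since $c_{i_0}$ has no exit in $\bar E$, no vertex of $\{c_{i_0}^0\}$ lies in $H_a$ (neither its hereditary closure nor the subsequent saturation can pull a cycle vertex in), whereas every vertex of the saturated hereditary closure $H^*$ of $\{c_{i_0}^0\}$ reaches $w_0$; hence $H_a\cap H^*=\emptyset$ (otherwise heredity would force $w_0\in H_a$). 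Then $I(H_a,B_{H_a})$ and $\langle\{c_{i_0}^0\}\rangle$ are graded ideals of $\bar L$, neither contained in $\bar I$ (each contains vertices, $\bar I$ contains none), whose product equals their intersection (Lemma~\ref{Product=Intersection}(1)), a graded ideal with empty vertex set, namely $\{0\}\subseteq\bar I$ --- contradicting primeness of $\bar I$. So $\bar E^0$ is downward directed, and then Lemma~\ref{cycle-ideal-lemma}(2) forces $\bar I=\langle f(c)\rangle$ for a single cycle $c$ without exits in $\bar E$ and some $f(x)\in K[x]$ with nonzero constant term, so $Y$ is a singleton; also a sink $v'$ ($v\in B_H\setminus S$) would have no common descendant with $s(c)$, so $B_H\setminus S=\emptyset$, and downward directedness gives $E^0\setminus H=M(s(c))$ as before. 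Finally, identifying $\langle\{c^0\}\rangle\cong\M_Y(K[x,x^{-1}])$ with $c$ mapped to a matrix unit times $x$ (Lemma~\ref{cycle-ideal-lemma}(1), Remark~\ref{cycle-ideal-remark}), and using that every ideal of $\bar L$ either contains $\langle\{c^0\}\rangle$ or is of the form $\langle g(c)\rangle\subseteq\langle\{c^0\}\rangle$ (Lemma~\ref{cycle-ideal-lemma}(2),(3)), one checks that $\langle f(c)\rangle$ is prime in $\bar L$ iff $\langle f(x)\rangle$ is a nonzero prime of the principal ideal domain $K[x,x^{-1}]$, i.e. iff $f$ is irreducible; moreover $c$ must be without (K), since an exit at a vertex of $c$ together with $E^0\setminus H=M(s(c))$ would produce a second cycle at that vertex, contradicting that $c$ has no exit in $\bar E$. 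This is case~(3).

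For the converse one runs the reductions backwards. In cases~(1) and~(2), $\bar E^0$ is downward directed (immediate for~(1); for~(2), $u'$ is a common descendant of every vertex of $M(u)$), so $L/I\cong L_K(\bar E)$ is prime. In case~(3), one checks that $c$ has no exit in $\bar E$ (an exit would, via $E^0\setminus H=M(s(c))$, yield a second cycle at a vertex of $c$, contradicting ``$c$ without (K)''), that $\bar E^0=M(s(c))$ is downward directed, and then --- using the description of the ideals contained in $\langle\{c^0\}\rangle$, Lemma~\ref{Product}(1), and the fact that $f$ is prime in $K[x,x^{-1}]$ --- that $\langle f(c)\rangle$ is prime in $\bar L$, whence $I$ is prime in $L$. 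I expect the main obstacle to be the non-graded forward direction: establishing that a nonzero vertex-free prime ideal forces the vertex set of the relevant quotient graph to be downward directed (the zero-product argument with $I(H_a,B_{H_a})$ and $\langle\{c_{i_0}^0\}\rangle$), and then reducing primeness of $\langle f(c)\rangle$ in $L_K(\bar E)$ to irreducibility of $f$ and to $c$ being without (K).
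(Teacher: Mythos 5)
The paper does not prove Theorem~\ref{specificprime}; it is quoted verbatim from \cite[Theorem 3.12]{R-1}, so there is no in-paper argument to compare against. Your reconstruction follows the route one would expect (and, in outline, the route of the original source): reduce via Theorem~\ref{arbitrary ideals}(1) and the graded quotient $L/I(H,S)\cong L_K(E\setminus(H,S))$ to a vertex-free ideal, characterize primeness of a Leavitt path algebra by downward directedness of the vertex set, and in the non-graded case pin down the ideal as $\langle f(c)\rangle$ inside $\langle\{c^0\}\rangle\cong\M_Y(K[x,x^{-1}])$ so that primeness becomes irreducibility of $f$. The central new step you supply --- producing the disjoint hereditary saturated sets $H_a$ and $H^*$ and using Lemma~\ref{Product=Intersection}(1) to get a zero product of two ideals not contained in $\bar I$ --- is correct: the descendant set of $a$ misses the cycle because any descendant on $c_{i_0}$ would reach $w_0$, saturation cannot adjoin a cycle vertex since each cycle vertex is regular with its unique edge ranging on the cycle, and a graded ideal with empty vertex set is zero because $B_\emptyset=\emptyset$.

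Three points are asserted rather than argued and deserve a line each in a full write-up. First, in case (2) downward directedness requires $u$ itself to reach the sink $u'$; this holds because $u\in B_H$ emits an edge into $E^0\setminus H=M(u)$, whose range then flows back to $u$, but it is not literally ``every vertex of $E^0\setminus H$ reaching $u$'' without that observation. Second, the equivalence between ``$c$ is without (K) in $E$'' and ``$c$ has no exits in $E\setminus(H,B_H)$'' (used in both directions) needs the cycle-extraction argument: an exit $e$ at $v\in\{c^0\}$ with $r(e)\in M(s(c))$ yields, via a shortest return path to $\{c^0\}$ and an arc of $c$, a genuine second cycle based at $v$; conversely a second cycle at $v$ lies entirely in $E^0\setminus H$ by heredity and so survives as an exit. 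Third, the ``if'' half of ``$L_K(F)$ prime iff $F^0$ downward directed'' must also handle the case where one of the two ideals contains no vertices, which is where Lemma~\ref{cycle-ideal-lemma}(2),(3) and Lemma~\ref{Product}(1) enter exactly as you indicate. None of these is a gap in substance; with them filled in, the proof is complete.
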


\begin{lemma} \label{Properties of primes} \cite[Corollary 4.5]{R-2}
Let $L$ be a Leavitt path algebra, let $P$ be a prime ideal of $L$, and let $I$ be an ideal of $L$. If $P \subsetneq I$, then $P=IP$.
\end{lemma}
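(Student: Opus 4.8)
The plan is to split into two cases according to whether the prime ideal $P$ is graded. Since $IP=PI$ by Theorem~\ref{arbitrary ideals}(4) and $IP\subseteq P$ always holds, it suffices to prove $P\subseteq IP$. If $P$ is graded, this is immediate: applying Lemma~\ref{Product=Intersection}(1) with the graded ideal $P$ in the role of ``$I$'' and the ideal $I$ in the role of ``$J$'' gives $PI=P\cap I=P$, since $P\subseteq I$. So from now on I would assume $P$ is not graded, which by Theorem~\ref{specificprime} forces $P$ to be of the form in case~(3) there (cases~(1) and~(2) yield graded ideals): writing $H=P\cap E^0$, we have $P=I(H,B_H)+\langle f(c)\rangle$ for a suitable cycle $c$ and polynomial $f$, and $G:=I(H,B_H)=\mathrm{gr}(P)$ is a graded ideal with $G\subseteq P\subsetneq I$.

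Next I would pass to the quotient $\bar L:=L/G\cong L_K(\bar E)$, $\bar E:=E\setminus(H,B_H)$, via the canonical graded isomorphism, writing $\bar J$ for the image of an ideal $J\supseteq G$ of $L$. Here $\bar E^{0}=E^{0}\setminus H=M(s(c))$ is downward directed (every vertex has $s(c)$ as a successor), $c$ is a cycle without exits in $\bar E$, and by Theorem~\ref{arbitrary ideals}(1),(2) we have $\bar P=\langle f(c)\rangle\subseteq\langle\{c^{0}\}\rangle$, a nonzero ideal of $\bar L$ containing no vertices of $\bar E$ (nonzero precisely because $P$ is not graded). Moreover $\bar P$ is prime in $\bar L$, since $G\subseteq P$ with $P$ prime; and, $\langle\{c^{0}\}\rangle$ being an ideal of $\bar L$ with local units, $\bar P$ is then also a prime ideal of the subring $\langle\{c^{0}\}\rangle$. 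Since $\langle\{c^{0}\}\rangle\cong\M_Y(K[x,x^{-1}])$ by Lemma~\ref{cycle-ideal-lemma}(1), and the ideal lattice of $\M_Y(K[x,x^{-1}])$ is isomorphic to that of $K[x,x^{-1}]$ --- a one-dimensional domain (a localization of $K[x]$), in which every nonzero prime is maximal --- the nonzero prime $\bar P$ is in fact a \emph{maximal} ideal of $\langle\{c^{0}\}\rangle$.

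The key step is then to show $\langle\{c^{0}\}\rangle\subseteq\bar I$, where $\bar P\subsetneq\bar I$ (because $G\subseteq P\subsetneq I$). I would argue by cases on whether $\bar I$ contains a vertex of $\bar E$. If it does not, then, since $\bar I\supseteq\bar P\neq 0$, Lemma~\ref{cycle-ideal-lemma}(2) gives $\bar I\subseteq\langle\{c^{0}\}\rangle$, so $\bar P\subsetneq\bar I\subseteq\langle\{c^{0}\}\rangle$ and maximality of $\bar P$ force $\bar I=\langle\{c^{0}\}\rangle$ --- impossible, as $\langle\{c^{0}\}\rangle$ does contain vertices. Hence $\bar I$ contains a vertex, and Lemma~\ref{cycle-ideal-lemma}(3) yields $\langle\{c^{0}\}\rangle\subseteq\bar I$. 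Finally, let $C$ be the ideal of $L$ with $G\subseteq C$ and $C/G=\langle\{c^{0}\}\rangle$; it is graded because the isomorphism $L/G\cong L_K(\bar E)$ is graded and $G$, $\langle\{c^{0}\}\rangle$ are graded. From $\bar P\subseteq\langle\{c^{0}\}\rangle\subseteq\bar I$ we get $P\subseteq C\subseteq I$, so Lemma~\ref{Product=Intersection}(1) gives $CP=C\cap P=P$, and $C\subseteq I$ gives $P=CP\subseteq IP\subseteq P$, i.e.\ $IP=P$.

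I expect the main obstacle to be the non-graded case, specifically identifying $\bar P$ with a nonzero (hence maximal) ideal inside $\langle\{c^{0}\}\rangle\cong\M_Y(K[x,x^{-1}])$ and then squeezing the graded ideal $C=C(\langle\{c^{0}\}\rangle)$ between $P$ and $I$; the graded case and the bookkeeping through the quotient are routine. This is also exactly where primeness of $P$ is essential: without it one could have $\bar P=\langle p_1(c)p_2(c)\rangle$ sitting inside $\langle p_1(c)\rangle\subsetneq\langle\{c^{0}\}\rangle$, and then $\bar I\bar P\ne\bar P$, so the conclusion would fail.
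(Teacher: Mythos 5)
The paper does not prove this lemma at all: it is imported verbatim as \cite[Corollary 4.5]{R-2}, so there is no in-paper argument to compare against. Your proof is a correct, self-contained derivation from the other results quoted in Section~\ref{prelim-section}, and it is worth noting that it closely parallels the technique the authors themselves use later (e.g.\ in Lemma~\ref{PrimeUniqueLemma} and Theorem~\ref{comp.Irreducible}): reduce to the non-graded case via Lemma~\ref{Product=Intersection}(1), pass to $L/\mathrm{gr}(P)\cong L_K(E\setminus(H,B_H))$, and work inside $\langle\{c^0\}\rangle\cong\M_Y(K[x,x^{-1}])$ via Lemma~\ref{cycle-ideal-lemma}, Remark~\ref{cycle-ideal-remark}, and Proposition~\ref{Morita equivalence}. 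All the individual steps check out: the graded case is immediate from $PI=P\cap I$; in the non-graded case $\mathrm{gr}(P)=I(H,B_H)$ because $I(H,B_H)\subseteq P$ forces $S=B_H$ in Theorem~\ref{arbitrary ideals}(1); $\bar P=\langle f(c)\rangle$ is maximal among the ideals of $\langle\{c^0\}\rangle$ because $f$ is irreducible and $K[x,x^{-1}]$ is a PID; the dichotomy from Lemma~\ref{cycle-ideal-lemma}(2,3) then forces $\langle\{c^0\}\rangle\subseteq\bar I$; and sandwiching the graded ideal $C$ with $C/\mathrm{gr}(P)=\langle\{c^0\}\rangle$ between $P$ and $I$ finishes via $P=CP\subseteq IP\subseteq P$. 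Your closing remark about where primeness enters (ruling out $\bar P=\langle p_1(c)p_2(c)\rangle$) is also accurate. The only stylistic shortcut worth flagging is the assertion that a prime ideal of $\bar L$ contained in $\langle\{c^0\}\rangle$ is prime as an ideal of the ring $\langle\{c^0\}\rangle$; this is true for ideals with local units, but you do not actually need it, since maximality of $\bar P$ in $\langle\{c^0\}\rangle$ already follows directly from the irreducibility of $f$ together with the lattice isomorphism of Proposition~\ref{Morita equivalence}.
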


\begin{remark}
An interesting consequence of Lemma~\ref{Properties of primes} is that if $P \subsetneq I$, for some prime ideal $P$ and ideal $I$ of $L$, then $P \subsetneq I^n$ for all positive integers $n$. (For, $P=IP$ implies that $P=I^nP \subseteq I^n$, and the inclusion must be proper, since otherwise we would have $P=I$.)
\end{remark}

\begin{proposition}\label{quasi-primaryequivalent} \cite[Proposition 3.2]{AMR}
Let $L=L_K(E)$ be a Leavitt path algebra, let $I$ be a proper ideal of $L$, and write $\,\mathrm{gr}(I) = I(H,S)$. Then the following are equivalent.
\begin{enumerate}
\item[$(1)$] $I$ is irreducible.

\item[$(2)$] $I$ is a prime-power ideal.  

\item[$(3)$] Either    
\begin{enumerate}
\item[$(3.1)$] $I$ is a graded prime ideal, in which case $(E\setminus (H,S))^{0}$ is downward directed, or   
\item[$(3.2)$] $I$ is a power of a non-graded prime ideal, in which case $(E\setminus (H,S))^{0}$ is downward directed; and $I = I(H,B_{H}) + \langle p^n(c) \rangle$ for a $($unique$)$ cycle $c$ without exits in $E\setminus (H,B_{H})$, an irreducible polynomial $p(x) \in K[x,x^{-1}]$, and a positive integer $n$.
\end{enumerate}
\end{enumerate}
\end{proposition}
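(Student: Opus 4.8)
\medskip
\noindent\emph{Sketch of a proof.}

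The plan is to establish the cyclic chain of implications $(1)\Rightarrow(3)\Rightarrow(2)\Rightarrow(1)$, using throughout the standard reduction to a quotient. Writing $\mathrm{gr}(I)=I(H,S)$, the graded isomorphism $L/\mathrm{gr}(I)\cong L_K(E\setminus(H,S))$ of Section~\ref{LPAidealSect} carries $I$ to $\bar I:=I/\mathrm{gr}(I)=\bigoplus_{i\in Y}\langle f_i(c_i)\rangle$, where by Theorem~\ref{arbitrary ideals}(1)--(2) the $c_i$ are cycles without exits. Since $\mathrm{gr}(I)\subseteq I$, the inclusion-preserving bijection between ideals of $L/\mathrm{gr}(I)$ and ideals of $L$ containing $\mathrm{gr}(I)$ sends $I$ to $\bar I$, so $I$ is irreducible (respectively prime, respectively a prime-power ideal) in $L$ if and only if $\bar I$ is so in $L/\mathrm{gr}(I)$. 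Moreover $\bar I$ contains no vertex of $E\setminus(H,S)$ (by Theorem~\ref{arbitrary ideals}(3) and the definition of $S$), so $\mathrm{gr}(\bar I)=\{0\}$; and each $f_i$ has positive degree with nonzero constant term, hence is a nonunit of $K[x,x^{-1}]$, so $\langle f_i(c_i)\rangle\subsetneq\langle\{c_i^0\}\rangle$. We may therefore assume from the start that $\mathrm{gr}(I)=\{0\}$, so that $I=\bigoplus_{i\in Y}\langle f_i(c_i)\rangle$ with each $c_i$ a cycle without exits in $E$.

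For $(1)\Rightarrow(3)$, suppose $I$ is irreducible. Since the ideals $\langle\{c_i^0\}\rangle$ are pairwise orthogonal (Lemma~\ref{Product}(2)), the direct-sum decomposition of $I$ shows that if two distinct $c_i,c_j$ occur with $f_i(c_i)\neq0\neq f_j(c_j)$, then $I$ equals the intersection of the two ideals $I+\langle\{c_i^0\}\rangle$ and $I+\langle\{c_j^0\}\rangle$, each properly containing $I$ (as $\langle f_k(c_k)\rangle\subsetneq\langle\{c_k^0\}\rangle$)---a contradiction. Hence either $I=\{0\}$, or $I=\langle f(c)\rangle$ for a single cycle $c$ without exits and some $f\in K[x]$ with $f(0)\neq0$ and $\deg f\geq1$. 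If $I=\{0\}$: were $L$ not prime, there would be nonzero ideals $A,B$ with $AB=\{0\}$, whence $A\cap B=\{0\}$ (as $L$ is semiprime) and $\{0\}$ is not irreducible; so $L$ is prime and, by Theorem~\ref{specificprime}(1), $E^0$ is downward directed, which is $(3.1)$. If $I=\langle f(c)\rangle$: factor $f=a\,q_1^{b_1}\cdots q_s^{b_s}$ into pairwise non-associate irreducibles of $K[x]$, each with nonzero constant term. Under the identification $\langle\{c^0\}\rangle\cong\M_{Y}(K[x,x^{-1}])$ of Lemma~\ref{cycle-ideal-lemma}(1), in which $\langle g(c)\rangle$ corresponds to the ideal generated by $g$, the fact that $K[x,x^{-1}]$ is a principal ideal domain gives $\langle f(c)\rangle=\bigcap_{j=1}^{s}\langle q_j^{b_j}(c)\rangle$; irreducibility forces $s=1$, whence $I=\langle p(c)\rangle^n=\langle p^n(c)\rangle$ by iterated Lemma~\ref{Product}(1), for an irreducible $p\in K[x]$ with $p(0)\neq0$ and an integer $n\geq1$. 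Finally, let $T$ be the set of vertices of $E$ not lying above $s(c)$; because $c$ has no exits, $T$ is hereditary and saturated and disjoint from the saturation of $\{c^0\}$, so the graded ideals $\langle T\rangle$ and $\langle\{c^0\}\rangle$ meet in $\{0\}$; if $T\neq\emptyset$ then $I$ is the intersection of $I+\langle T\rangle$ and $\langle\{c^0\}\rangle$, each properly larger than $I$, contradicting irreducibility. Hence $T=\emptyset$, which after undoing the reduction forces $S=B_H$ and $E^0\setminus H=M(s(c))$ downward directed, and Theorem~\ref{specificprime}(3) then exhibits $P:=\langle p(c)\rangle$ as a non-graded prime (it is nonzero and contains no vertex) with $P^n=\langle p^n(c)\rangle=I$ by Lemma~\ref{Product}(1); this is $(3.2)$.

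For $(3)\Rightarrow(2)$: under $(3.1)$, $I$ is itself a graded prime, so $I=I^1$; under $(3.2)$, pass to $L/I(H,B_H)$, where $\bar I=\langle p^n(c)\rangle$, the cycle $c$ has no exits (hence no other closed path through its vertices, so is without $(K)$), and every vertex lies above $s(c)$ (as in the proof of Lemma~\ref{cycle-ideal-lemma}(3)); Theorem~\ref{specificprime}(3) then shows $\langle p(c)\rangle$ is prime, and $\langle p(c)\rangle^n=\langle p^n(c)\rangle$ by Lemma~\ref{Product}(1), so $I$ is a power of a non-graded prime. For $(2)\Rightarrow(1)$: a prime ideal $P$ is always irreducible, since $P=A\cap B$ with $A,B\supsetneq P$ would give $AB\subseteq P$ and then $A\subseteq P$ or $B\subseteq P$; so by $(3)$ it remains to treat $I=P^n$ with $P$ non-graded. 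Reducing, $\bar I=\langle p^n(c)\rangle$, and by Lemma~\ref{cycle-ideal-lemma}(2)--(3) (and $K[x,x^{-1}]$ being a principal ideal domain) every ideal of the quotient properly containing $\bar I$ either contains a vertex---hence contains $\langle\{c^0\}\rangle\supsetneq\bar I$---or equals $\langle p^k(c)\rangle$ for some $1\leq k<n$; a short case analysis then shows that the intersection of any two such ideals is never $\bar I$, so $\bar I$, and hence $I$, is irreducible.

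The main obstacle is the implication $(1)\Rightarrow(3)$, and inside it the last step, where irreducibility must be leveraged to force the ambient quotient graph to be downward directed. This is where one introduces the hereditary saturated set $T$ of vertices missing the terminal cycle $c$ (verifying saturation uses precisely that $c$ has no exits) and relies on the orthogonality $\langle T\rangle\cap\langle\{c^0\}\rangle=\{0\}$ of graded ideals with disjoint vertex sets. One also has to be consistently careful about the difference between a cycle ``without exits'' (needed to invoke Lemmas~\ref{Product} and~\ref{cycle-ideal-lemma}) and a cycle ``without $(K)$'' (as in Theorem~\ref{specificprime}), and about confirming that passage to $L/\mathrm{gr}(I)$ truly reduces to an ideal with trivial graded part and no vertices.
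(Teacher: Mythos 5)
The paper does not prove this proposition: it is imported verbatim from \cite[Proposition 3.2]{AMR}, so there is no in-paper argument to compare against. Judged on its own, your sketch is correct and, notably, it is built entirely from the toolkit this paper assembles for its other proofs (Theorem~\ref{arbitrary ideals}, Lemmas~\ref{Product=Intersection}, \ref{Product}, \ref{cycle-ideal-lemma}, Theorem~\ref{specificprime}, Proposition~\ref{Morita equivalence}), and it closely parallels how the paper itself handles the analogous statements in Theorem~\ref{comp.Irreducible} and Lemma~\ref{prime-gr-part}: reduce modulo $\mathrm{gr}(I)$, use the direct-sum decomposition $I/\mathrm{gr}(I)=\bigoplus_{i}\langle f_i(c_i)\rangle$ to force a single cycle, transfer to $K[x,x^{-1}]$ via $\langle\{c^0\}\rangle\cong\M_Y(K[x,x^{-1}])$, and use the hereditary saturated complement of $M(s(c))$ to extract downward directedness. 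Two spots are asserted rather than argued and deserve a line each if you write this up: (i) that the lattice bijection between ideals of $L$ containing $\mathrm{gr}(I)$ and ideals of $L/\mathrm{gr}(I)$ respects the property ``is an $n$-th power of a prime'' in both directions --- the nontrivial direction needs $\mathrm{gr}(I)\subseteq Q^n$ for the lifted prime $Q$, which follows from Lemma~\ref{Product=Intersection}(1) applied repeatedly to the graded ideal $\mathrm{gr}(I)\subseteq Q$; and (ii) that $\mathrm{gr}(P^n)=\mathrm{gr}(P)=I(H,B_H)$ for a non-graded prime $P$, which you use implicitly when you write $\bar I=\langle p^n(c)\rangle$ in the $(2)\Rightarrow(1)$ step. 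Both are short computations, so these are presentation gaps rather than mathematical ones.
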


We conclude this section with a couple of facts about ideals in general rings.

\begin{proposition}\label{Morita equivalence} \cite[Proposition 1]{EEKRR}
Let $R$ be a ring with local units and $Y$ a nonempty set.
\begin{enumerate}
\item[$(1)$] Every ideal of  $\, \M_{Y}(R)$ is of the form $\, \M_{Y}(I)$ for some ideal $I$ of $R$. The map $I \mapsto \M_{Y}(I)$ defines a lattice isomorphism between the lattice of ideals of $R$ and the lattice of ideals of $\, \M_{Y}(R)$.

\item[$(2)$] For any two ideals $I$ and $J$ of $R$, we have $\, \M_{Y}(IJ)=\M_{Y} (I)\M_{Y}(J)$.
\end{enumerate}
\end{proposition}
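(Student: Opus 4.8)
The plan is to run the standard ``Morita'' argument with matrix units, using the local units of $R$ to compensate for the absence of an identity. Write $E_{ij}(r)$ for the matrix in $\M_Y(R)$ with $r$ in position $(i,j)$ and $0$ elsewhere; since the matrices in $\M_Y(R)$ have only finitely many nonzero entries, every element of $\M_Y(R)$ is a finite sum of such $E_{ij}(r)$, and $E_{ij}(a)E_{kl}(b)=\delta_{jk}E_{il}(ab)$. Fix an element $y_0\in Y$ (possible since $Y\neq\emptyset$).

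For part $(1)$, given an ideal $\mathcal{I}$ of $\M_Y(R)$, I would put $I:=\{r\in R \mid E_{y_0y_0}(r)\in\mathcal{I}\}$. This is an ideal of $R$: it is clearly an additive subgroup, and for $s\in R$ and $r\in I$ one has $E_{y_0y_0}(sr)=E_{y_0y_0}(s)E_{y_0y_0}(r)\in\mathcal{I}$ and likewise $E_{y_0y_0}(rs)\in\mathcal{I}$, so $sr,rs\in I$. The crux is then to show $\mathcal{I}=\M_Y(I)$. If $A\in\mathcal{I}$ and $u\in R$ is a local unit with $uA_{ij}u=A_{ij}$, then $E_{y_0i}(u)\,A\,E_{jy_0}(u)=E_{y_0y_0}(A_{ij})\in\mathcal{I}$, so every entry of $A$ lies in $I$; conversely, if $A$ has all entries in $I$, write $A=\sum E_{ij}(A_{ij})$ as a finite sum and, choosing for each nonzero term a local unit $u$ with $uA_{ij}u=A_{ij}$, observe that $E_{iy_0}(u)\,E_{y_0y_0}(A_{ij})\,E_{y_0j}(u)=E_{ij}(A_{ij})\in\mathcal{I}$, whence $A\in\mathcal{I}$. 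The map $I\mapsto\M_Y(I)$ is thus surjective onto the ideals of $\M_Y(R)$; it is injective and order-preserving with order-preserving inverse directly from the entrywise description, so it is an order isomorphism between the ideal lattices of $R$ and $\M_Y(R)$, and hence a lattice isomorphism.

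For part $(2)$, the inclusion $\M_Y(I)\M_Y(J)\subseteq\M_Y(IJ)$ is immediate, since the $(i,j)$-entry $\sum_k A_{ik}B_{kj}$ of a product $AB$ with $A\in\M_Y(I)$, $B\in\M_Y(J)$ lies in $IJ$, and this persists under finite sums. For the reverse inclusion, take $C\in\M_Y(IJ)$ and write each nonzero entry as $C_{ij}=\sum_k a_k b_k$ with $a_k\in I$, $b_k\in J$; then $E_{ij}(C_{ij})=\sum_k E_{iy_0}(a_k)E_{y_0j}(b_k)\in\M_Y(I)\M_Y(J)$, and summing over the finitely many nonzero entries shows $C\in\M_Y(I)\M_Y(J)$.

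I do not anticipate a genuine obstacle: the entire content is bookkeeping with matrix units. The one point to keep in view is that $R$ need not be unital, which is exactly why the local units are needed --- they let one move an element between the $(i,j)$- and $(y_0,y_0)$-positions of a matrix unchanged --- and that $\M_Y(R)$ consists of \emph{finitely supported} matrices, which is what makes ``every matrix is a finite sum of $E_{ij}(r)$'s'' valid and keeps all the sums above finite.
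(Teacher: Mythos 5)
Your proof is correct. Note that the paper itself gives no argument for this proposition --- it is quoted verbatim from \cite[Proposition 1]{EEKRR} --- so there is no internal proof to compare against; your matrix-units computation (conjugating an entry into and out of the $(y_0,y_0)$ position via local units, and splitting entries of $\M_Y(IJ)$ through the $y_0$ column) is the standard and complete way to establish both parts, and you correctly flag the two points that matter in this generality: the finite support of the matrices and the role of local units in the non-unital setting.
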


\section{Products of Prime-Power Ideals} \label{prime-prod-section}

In this section we establish the uniqueness of factoring an arbitrary ideal in a Leavitt path algebra into an irredundant product of finitely many prime-power ideals. We begin with a couple of technical lemmas.

\begin{lemma} \label{PrimeUniqueLemma}
Let $L=L_K(E)$ be a Leavitt path algebra, and let $I = P_{1}^{r_{1}}\cdots P_{m}^{r_{m}}$, where each $P_i$ is a prime ideal, $m, r_1, \dots, r_m$ are positive integers, $P_i \not\subseteq P_j$ whenever $i \neq j$, and $r_i = 1$ whenever $P_i$ is graded. Then the $P_i$, the $r_i$, and $m$ are uniquely determined by $I$, up to the order in which the ideals appear in the product.
\end{lemma}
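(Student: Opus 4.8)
The plan is to prove the statement in two stages: first that the set $\{P_1,\dots,P_m\}$ (hence also $m$) is determined by $I$, and then that each exponent $r_i$ is determined. For the first stage I would argue that $\{P_1,\dots,P_m\}$ is precisely the set of primes of $L$ minimal over $I$. Indeed, $I=P_1^{r_1}\cdots P_m^{r_m}\subseteq P_i^{r_i}\subseteq P_i$ shows every $P_i$ contains $I$; and if $P$ is any prime containing $I$, then primeness of $P$ forces $P_i^{r_i}\subseteq P$ for some $i$, and peeling off factors one at a time gives $P_i\subseteq P$. Combined with the hypothesis that the $P_i$ are pairwise incomparable, this yields that each $P_i$ is minimal over $I$ (a prime lying strictly inside $P_i$ and containing $I$ would contain some $P_j$, forcing $P_j\subseteq P_i$) and that every prime minimal over $I$ equals some $P_i$. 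Since this description of $\{P_1,\dots,P_m\}$ uses only $I$, a second factorization $I=Q_1^{s_1}\cdots Q_n^{s_n}$ of the prescribed type must have $n=m$ and, after reindexing, $P_i=Q_i$ for all $i$.

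For the exponents, there is nothing to prove when $P_i$ is graded, since then $r_i=s_i=1$ by hypothesis, so suppose $P_i$ is non-graded. By Theorem~\ref{specificprime}(3) I may write $P_i=\mathrm{gr}(P_i)+\langle p(c)\rangle$ with $p$ irreducible, and pass to $\bar L:=L/\mathrm{gr}(P_i)\cong L_K(\bar E)$; by Proposition~\ref{quasi-primaryequivalent} the vertex set $\bar E^0$ is downward directed, the image $\bar c$ of $c$ is a cycle without exits in $\bar E$, and $\overline{P_i}=\langle\bar p(\bar c)\rangle\subseteq\langle\{\bar c^{\,0}\}\rangle$. Put $A:=\prod_{j\neq i}P_j^{r_j}$, so that $I=P_i^{r_i}A$; since $A\not\subseteq P_i$ (otherwise primeness would force some $P_j\subseteq P_i$), the image $\bar A:=(A+\mathrm{gr}(P_i))/\mathrm{gr}(P_i)$ is nonzero and not contained in $\overline{P_i}$. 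Because the quotient map is multiplicative on products of ideals, and by Lemma~\ref{Product}(1) applied in $\bar L$ (where $\bar c$ has no exits),
\[
\bar I:=(I+\mathrm{gr}(P_i))/\mathrm{gr}(P_i)=\overline{P_i}^{\,r_i}\,\bar A=\langle\bar p(\bar c)^{r_i}\rangle\,\bar A=\langle\bar p^{\,r_i}(\bar c)\rangle\,\bar A .
\]

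The next step is to read off $r_i$ from $\bar I$ via the polynomial ring. Since $\langle\bar p^{\,r_i}(\bar c)\rangle\subseteq\langle\{\bar c^{\,0}\}\rangle$ and $\langle\{\bar c^{\,0}\}\rangle$ has local units, one checks that $\langle\bar p^{\,r_i}(\bar c)\rangle\bar A=\langle\bar p^{\,r_i}(\bar c)\rangle\big(\langle\{\bar c^{\,0}\}\rangle\cap\bar A\big)$, so $\bar I$ lies inside $\langle\{\bar c^{\,0}\}\rangle\cong\M_Y(K[x,x^{-1}])$; under this isomorphism and the lattice isomorphism of Proposition~\ref{Morita equivalence}, ideals of $\bar L$ contained in $\langle\{\bar c^{\,0}\}\rangle$ correspond to ideals of the PID $K[x,x^{-1}]$, with $\langle\bar p^{\,r_i}(\bar c)\rangle\leftrightarrow(p^{r_i})$ by Remark~\ref{cycle-ideal-remark}. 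Now if $\bar A$ contains a vertex of $\bar E$, then $\langle\{\bar c^{\,0}\}\rangle\subseteq\bar A$ by Lemma~\ref{cycle-ideal-lemma}(3), and one gets $\bar I=\langle\bar p^{\,r_i}(\bar c)\rangle$; otherwise $\bar A=\langle\bar g(\bar c)\rangle$ for some $g$ by Lemma~\ref{cycle-ideal-lemma}(2), and $\bar A\not\subseteq\overline{P_i}$ forces $p\nmid g$, so by Lemma~\ref{Product}(1) and Proposition~\ref{Morita equivalence}(2) one gets $\bar I=\langle\bar p^{\,r_i}(\bar c)\bar g(\bar c)\rangle\leftrightarrow(p^{r_i}g)$. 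In either case the ideal of $K[x,x^{-1}]$ corresponding to $\bar I$ has $p$-adic valuation exactly $r_i$; as this integer depends only on $I$ and $P_i$ (changing $p$ to an associate does not change the valuation), the second factorization must yield the same integer, so $r_i=s_i$.

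The main obstacle I anticipate is the third paragraph: verifying that the product $\langle\bar p^{\,r_i}(\bar c)\rangle\bar A$ really sits inside $\langle\{\bar c^{\,0}\}\rangle$ so that the correspondence with $K[x,x^{-1}]$ via Proposition~\ref{Morita equivalence} can be invoked, and treating carefully the two possibilities for $\bar A$ (that its intersection with $\langle\{\bar c^{\,0}\}\rangle$ is all of $\langle\{\bar c^{\,0}\}\rangle$, versus that $\bar A$ equals some $\langle\bar g(\bar c)\rangle$ with $p\nmid g$). The first stage, establishing uniqueness of the primes $P_i$, is by comparison routine.
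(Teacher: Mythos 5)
Your proof is correct, and its overall strategy coincides with the paper's: first pin down the set $\{P_1,\dots,P_m\}$ using primeness together with pairwise incomparability (your minimal-primes formulation and the paper's direct mutual-containment argument are the same computation), then pass to $\bar L = L/\mathrm{gr}(P_i)$ and reduce the determination of the exponents to unique factorization in $K[x,x^{-1}]$. The one genuine difference is in how the exponents are extracted. The paper keeps all the factors separate: after quotienting by $\mathrm{gr}(P_1)$ it must show that every non-graded $P_j$ whose image lands inside $M=\langle\{c^0\}\rangle$ has the \emph{same} hereditary saturated set $H$ and the \emph{same} no-exit cycle $c$ as $P_1$, so that the whole product collapses to $\langle f_1^{r_1}(c)\cdots f_k^{r_k}(c)\rangle$, and then it compares the two polynomial products simultaneously. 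You instead isolate one prime at a time, bundle everything else into $A=\prod_{j\neq i}P_j^{r_j}$, and read off $r_i$ as the $p$-adic valuation of the ideal of $K[x,x^{-1}]$ corresponding to $\bar I$; the only structural input you need about $\bar A$ is the vertex/no-vertex dichotomy of Lemma~\ref{cycle-ideal-lemma}(2,3), which spares you the paper's verification that the co-resident non-graded primes share $H$ and $c$. That is a modest but real streamlining. Two small remarks: the ``obstacle'' you flag is not one, since $\bar I=\langle\bar p^{\,r_i}(\bar c)\rangle\bar A\subseteq\langle\bar p^{\,r_i}(\bar c)\rangle\subseteq\langle\{\bar c^{\,0}\}\rangle$ is immediate (a product of ideals lies in each factor), so the detour through $\langle\{\bar c^{\,0}\}\rangle\cap\bar A$ is unnecessary; and in the case where $\bar A$ contains a vertex you should justify $\langle\bar p^{\,r_i}(\bar c)\rangle\bar A=\langle\bar p^{\,r_i}(\bar c)\rangle$ explicitly --- it follows in one line from Lemma~\ref{Product=Intersection}(1) applied to the graded ideal $\langle\{\bar c^{\,0}\}\rangle\subseteq\bar A$, exactly as the paper does with its ideal $M$.
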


\begin{proof}
Suppose that $I = Q_{1}^{s_{1}} \cdots Q_{n}^{s_{n}}$, where each $Q_i$ is a prime ideal, $n,s_1, \dots, s_n$ are positive integers, $Q_i \not\subseteq Q_j$ whenever $i \neq j$, and $s_i = 1$ whenever $Q_i$ is graded. We shall prove that $m=n$, and that there is a permutation $\sigma$ of $\{1, \dots, m\}$, such that $P_i = Q_{\sigma(i)}$ and $r_i = s_{\sigma(i)}$ for each $i \in \{1, \dots, m\}$.

Let $i \in \{1, \dots, m\}$. Since $P_i$ is prime, $I \subseteq P_i$ implies that $Q_{\sigma(i)} \subseteq P_{i}$ for some $\sigma(i) \in \{1, \dots, n\}$. Likewise, since $I \subseteq Q_{\sigma(i)}$, we have $P_{i'} \subseteq Q_{\sigma(i)}$ for some $i' \in \{1, \dots, m\}$, and hence $P_{i'} \subseteq P_i$. By hypothesis, this means that $i' = i$, and hence $P_i = Q_{\sigma(i)}$. Since $i$ was arbitrary, and since $P_1, \dots, P_m$ are distinct, this implies that $m \leq n$. Then, by symmetry, we conclude that $m=n$, and that $i \mapsto \sigma(i)$ defines a permutation $\sigma$ of $\{1, \dots, m\}$. Setting $t_i = s_{\sigma(i)}$ for each $i$, and using Theorem~\ref{arbitrary ideals}(4), we then have
\[
I=P_{1}^{r_{1}}P_{2}^{r_{2}}\cdots P_{m}^{r_{m}}=P_{1}^{t_{1}}P_{2}^{t_{2}}\cdots P_{m}^{t_{m}}.
\]
So it remains to show that $r_i = t_i$ for each $i$.

By hypothesis, if $P_i$ is graded, for some $i$, then $r_i=1=t_i$. We may therefore assume that at least one of the ideals $P_1, \dots, P_m$ is non-graded, which may be taken to be $P_1$, by Theorem~\ref{arbitrary ideals}(4). (Note that this implies that none of $P_1, \dots, P_m$ is zero.) We shall conclude the proof by showing that $r_1 = t_1$.

By Theorem~\ref{specificprime}, $P_1 = I(H,B_{H})+\langle f_1(c)\rangle$, where $H=P_{1}\cap E^{0}$, $c$ is a cycle without exits in $E\backslash(H,B_{H})$, $E^0\setminus H = M(s(c))$, and $f_1(x) \in K[x, x^{-1}]$ is irreducible (and may be taken to be in $K[x]$, with a nonzero constant term). Note that $(E\backslash(H,B_{H}))^0 = E^0\setminus H$ is downward directed. We shall now pass to the ring $\bar{L}=L/I(H,B_{H})$, which may be identified with $L_{K}(E\backslash(H,B_{H}))$, as mentioned in Section~\ref{LPAidealSect}. For each $i \in \{1, \dots, m\}$ let $\bar{P}_{i}=(P_{i}+I(H,B_{H}))/I(H,B_{H}) \subseteq \bar{L}$, and let $M$ be the ideal of $\bar{L}$ generated by $\{c^{0}\}$. Then, by Theorem~\ref{arbitrary ideals}, $\bar{P}_{1}= \langle f_1(c) \rangle \subsetneq M$. Moreover, by Lemma~\ref{cycle-ideal-lemma}(2,3), for each $i \geq 2$, either $M\subseteq\bar{P}_i$ or $\bar{P}_i\subseteq M$, depending on whether or not $\bar{P}_i$ contains a vertex.

Without loss of generality, we may assume that $\bar{P}_{i}\subsetneq M$ if $1 < i \leq k$, and $M \subseteq \bar{P}_{i}$ if $k < i \leq m$, for some $k$. Then, by Lemma~\ref{Product=Intersection}(1), $M$ being graded implies that $\bar{P}_{i}M=\bar{P}_{i}\cap M=\bar{P}_{i}$ for each $1 < i \leq k$, and $M\bar{P}_i = M\cap\bar{P}_i=M$ for each $k < i \leq m$. Using these equations repeatedly, we then have
\[
\bar{P}_{1}^{r_{1}}\bar{P}_{2}^{r_{2}}\cdots\bar{P}_{m}^{r_{m}} = \bar{P}_{1}^{r_{1}}\bar{P}_{2}^{r_{2}}\cdots \bar{P}_{k}^{r_{k}} M \bar{P}_{k+1}^{r_{k+1}} \cdots \bar{P}_{m}^{r_{m}} = \bar{P}_{1}^{r_{1}}\bar{P}_{2}^{r_{2}}\cdots \bar{P}_{k}^{r_{k}} M=\bar{P}_{1}^{r_{1}}\bar{P}_{2}^{r_{2}}\cdots \bar{P}_{k}^{r_{k}},
\]
and likewise 
\[
\bar{P}_{1}^{t_{1}}\bar{P}_{2}^{t_{2}}\cdots\bar{P}_{m}^{t_{m}} = \bar{P}_{1}^{t_{1}}\bar{P}_{2}^{t_{2}}\cdots\bar{P}_{k}^{t_{k}}.
\] 
Now, for each $1 < i \leq k$, since $\bar{P}_{i}$ contains no vertices, we have $\bar{P}_{i}=\langle g_{i}(c)\rangle$ for some $g_{i}(x)\in K[x]$ with a nonzero constant term, by Lemma~\ref{cycle-ideal-lemma}(2). This means that, for each $1 < i \leq k$, we have $\mathrm{gr}(P_{i}) \subseteq I(H,B_{H})$, which in turn implies that $P_{i}$ is not graded. (Otherwise, $P_{i}=\mathrm{gr}(P_{i})\subseteq I(H,B_{H})\subseteq P_{1}$, contrary to hypothesis.) Thus, by Theorem~\ref{specificprime}, for each $1 < i \leq k$, we have $P_{i}=I(H_{i},B_{H_{i}})+ \langle f_{i}(c_{i}) \rangle$ where $H_i=P_{i}\cap E^{0}$, $c_i$ is a cycle without exits in $E\backslash(H_i,B_{H_i})$, $E^0\setminus H_i = M(s(c_i))$, and $f_i(x) \in K[x, x^{-1}]$ is irreducible (and may be taken to be in $K[x]$, with a nonzero constant term). In particular, $H_i \subseteq H$.

We claim that $H_{i}=H$ for each $1 < i \leq k$. Seeking a contradiction, suppose that there is a vertex $u\in H\backslash H_{i}$ for some $i$. Since $E^0\setminus H_i = M(s(c_i))$, we have $u\geq s(c_i)$. Since $H$ is hereditary, this implies that $s(c_i) \in H$, and hence $\{c_{i}^{0}\} \subseteq H$. It follows that $P_i \subseteq P_1$, contrary to hypothesis. Thus $H_{i}=H$, and hence also $E\backslash(H,B_{H}) = E\backslash(H_{i},B_{H_{i}})$, for each $1 < i \leq k$.

Since $E^0\setminus H = M(s(c))$, and the cycles $c_i$ have no exits in $E\backslash(H,B_{H})$, necessarily $c_i = c$ for each $1 < i \leq k$. Thus $P_{i}=I(H,B_{H})+\langle f_{i}(c) \rangle$ for each $1 < i \leq k$, and the polynomials $f_i(x)$ are necessarily non-conjugate (i.e., are not scalar multiples of one another). By Lemma~\ref{Product}(1), we therefore have
\[
\langle f_1^{r_{1}}(c)\rangle \cdots \langle f_k^{r_{k}}(c)\rangle = \bar{P}_{1}^{r_{1}}\bar{P}_{2}^{r_{2}}\cdots \bar{P}_{k}^{r_{k}} = \bar{I} = \bar{P}_{1}^{t_{1}}\bar{P}_{2}^{t_{2}}\cdots\bar{P}_{k}^{t_{k}} = \langle f_1^{t_{1}}(c)\rangle \cdots \langle f_k^{t_{k}}(c)\rangle.
\]
Setting $v=s(c)$, and noting that $vf_i(c)v=f_i(c)$ for each $i$, we see that $\bar{I} \subseteq v\bar{L}v$. Now, according to~\cite[Lemma 2.2.7]{AAS}, given that $c$ is a cycle without exits in $E\backslash(H,B_{H})$, we have $vL_{K}(E\backslash(H,B_{H}))v \cong K[x,x^{-1}]$, via the $K$-algebra isomorphism that maps $v \mapsto 1$, $c\mapsto x$, and $c^*\mapsto x^{-1}$. In particular, $f_i(c)$ is mapped to $f_i(x)$ by this isomorphism, and so
\[
\langle f_1^{r_{1}}(x) \cdots f_k^{r_{k}}(x)\rangle = \langle f_1^{r_{1}}(x)\rangle \cdots \langle f_k^{r_{k}}(x)\rangle = \langle f_1^{t_{1}}(x)\rangle \cdots \langle f_k^{t_{k}}(x)\rangle = \langle f_1^{t_{1}}(x) \cdots f_k^{t_{k}}(x)\rangle
\]
in $K[x,x^{-1}]$. This implies that $f_1^{r_{1}}(x)$ divides $f_1^{t_{1}}(x) \cdots f_k^{t_{k}}(x)$, and that $f_1^{t_{1}}(x)$ divides $f_1^{r_{1}}(x) \cdots f_k^{r_{k}}(x)$. Since $K[x,x^{-1}]$ is a unique factorization domain, and the $f_i(x)$ are non-conjugate and irreducible, we then conclude that $r_{1}=t_{1}$. By symmetry, it follows that $r_{i}=t_{i}$ for all $i$.
\end{proof}

Recall that a (finite) product $I=J_{1}\cdots J_{m}$ of ideals $J_{i}$ in a ring is \textit{irredundant} if either $m=1$ or $I\neq J_{1}\cdots J_{i-1}J_{i+1} \cdots J_{m}$ for all $i \in \{1, \dots, m\}$. 

We are now ready for our first main result. 

\begin{theorem} \label{Uniqueness} 
Let $L$ be a Leavitt path algebra, and let $I_1, \dots, I_m$ be powers of distinct prime ideals of $L$. If $I_1 \cdots I_m$ is irredundant, then the prime-power ideals $I_1, \dots, I_m$ are uniquely determined, up to the order in which they appear.
\end{theorem}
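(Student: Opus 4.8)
The plan is to reduce everything to Lemma~\ref{PrimeUniqueLemma}, whose hypotheses (primes pairwise non-comparable, and exponent $1$ on graded primes) we will show are automatically forced by irredundancy. Write $I_i = P_i^{r_i}$ for distinct prime ideals $P_1,\dots,P_m$ and positive integers $r_1,\dots,r_m$, and suppose $I = I_1\cdots I_m = J_1\cdots J_n$ is a second irredundant factorization with $J_\ell = Q_\ell^{s_\ell}$ for distinct primes $Q_\ell$ and positive integers $s_\ell$. The first, routine, step is to normalize the exponents sitting on graded primes: if $P_i$ is graded, then Lemma~\ref{Product=Intersection}(1) gives $P_i^2 = P_i\cap P_i = P_i$, hence $P_i^{r_i}=P_i$ by induction, so $I_i = P_i$ and we may replace $r_i$ by $1$ without changing $I_i$, the product $I$, or its irredundancy; do the same for the $Q_\ell$. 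After this, $r_i=1$ whenever $P_i$ is graded and $s_\ell=1$ whenever $Q_\ell$ is graded.

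The crux is the claim that the primes occurring in an irredundant product of powers of distinct primes are pairwise incomparable. Suppose $P_i\subseteq P_j$ for some $i\neq j$; since the $P_k$ are distinct, this inclusion is proper, so Lemma~\ref{Properties of primes} yields $P_i = P_j P_i$. Multiplying this identity by $P_j$ repeatedly gives $P_j^{r_j}P_i = P_i$, and therefore $P_j^{r_j}P_i^{r_i} = (P_j^{r_j}P_i)P_i^{r_i-1} = P_i^{r_i}$. Using commutativity of ideal multiplication (Theorem~\ref{arbitrary ideals}(4)) to place the factors $P_i^{r_i}$ and $P_j^{r_j}$ next to each other, we obtain
\[
I = \left(\prod_{k\neq i,j} P_k^{r_k}\right) P_j^{r_j} P_i^{r_i} = \left(\prod_{k\neq i,j} P_k^{r_k}\right) P_i^{r_i},
\]
which is exactly the product $I_1\cdots I_m$ with the factor $I_j$ deleted (here $m\geq 2$ since $i\neq j$ exist), contradicting irredundancy. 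Hence $P_i\not\subseteq P_j$ for all $i\neq j$, and by the same argument $Q_\ell\not\subseteq Q_{\ell'}$ for all $\ell\neq\ell'$.

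Now both factorizations satisfy every hypothesis of Lemma~\ref{PrimeUniqueLemma}, so that lemma gives $m=n$ together with a permutation $\sigma$ of $\{1,\dots,m\}$ for which $P_i = Q_{\sigma(i)}$ and $r_i = s_{\sigma(i)}$ for all $i$; consequently $I_i = P_i^{r_i} = Q_{\sigma(i)}^{s_{\sigma(i)}} = J_{\sigma(i)}$, which is the assertion. I expect the only genuine content to be the incomparability step of the second paragraph — combining Lemma~\ref{Properties of primes} with irredundancy and the commutativity of ideal products — while the normalization of graded exponents and the final invocation of Lemma~\ref{PrimeUniqueLemma} are bookkeeping. (Note also that this treatment subsumes the case $m=1$, where the incomparability argument is vacuous and Lemma~\ref{PrimeUniqueLemma} still applies, ruling out a competing factorization with $n\geq 2$.)
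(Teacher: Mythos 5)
Your proposal is correct and follows essentially the same route as the paper's proof: normalize exponents on graded primes via Lemma~\ref{Product=Intersection}(1), use Lemma~\ref{Properties of primes} together with irredundancy and Theorem~\ref{arbitrary ideals}(4) to force pairwise incomparability of the primes, and then invoke Lemma~\ref{PrimeUniqueLemma}. The only (harmless) difference is that you explicitly verify the hypotheses of Lemma~\ref{PrimeUniqueLemma} for both competing factorizations, which the paper leaves to symmetry.
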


\begin{proof}
For each $i$, write $I_i = P_{i}^{r_{i}}$ for some prime ideal $P_i$ and positive integer $r_i$, where we can let $r_i = 1$ if $P_i$ is graded, by Lemma~\ref{Product=Intersection}(1). By Lemma~\ref{PrimeUniqueLemma}, to conclude that the $I_i$ are uniquely determined, up to the order in which they appear, it suffices to show that $P_i \not\subseteq P_j$ whenever $i \neq j$ (assuming that $m>1$).

Seeking a contradiction, suppose that $P_{i} \subseteq P_{j}$ for some $i \neq j$. Since these ideals are distinct, by hypothesis, we have $P_i \subsetneq P_j$. Hence, by Lemma~\ref{Properties of primes}, $P_{j}P_{i} = P_{i}$, and so $P_{j}^{r_{j}}P_{i}^{r_{i}} = P_{i}^{r_{i}}$. By Theorem~\ref{arbitrary ideals}(4), we then have
\[
I_{1} \cdots I_{m} = I_{1} \cdots I_{j-1}I_{j+1} \cdots I_{m},
\]
contrary to hypothesis. Therefore $P_i \not\subseteq P_j$ whenever $i \neq j$, as desired.
\end{proof}

\begin{remark}
The assumption that the $I_i$ are powers of distinct prime ideals is necessary for the conclusion of Theorem~\ref{Uniqueness} to hold. For example, if $P$ is a non-graded prime ideal in a Leavitt path algebra $L$, then $P^2 \neq P$, by Lemma~\ref{PrimeUniqueLemma}. Thus $P^2$ and $P\cdot P$ are two distinct irredundant representations of the same ideal as a product of prime-power ideals.
\end{remark}

The following consequence of Theorem~\ref{Uniqueness} significantly strengthens~\cite[Theorem 3.16]{ARRS} and~\cite[Theorem 3.3]{EMR}.

\begin{corollary} \label{PrimeUniqueness} 
Let $L$ be a Leavitt path algebra, and let $I$ be an irredundant product of finitely many prime ideals of $L$. Then the prime ideals in the product $I$ are uniquely determined, up to the order in which they appear.
\end{corollary}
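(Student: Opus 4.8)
The plan is to derive this from Theorem~\ref{Uniqueness} by collecting together repeated prime factors. Suppose $I = Q_1 \cdots Q_n = Q_1' \cdots Q_{n'}'$, where both products are irredundant and each $Q_i$ and each $Q_j'$ is a prime ideal. Since ideal multiplication in $L$ is commutative (Theorem~\ref{arbitrary ideals}(4)), I would group equal factors and write $I = P_1^{r_1} \cdots P_m^{r_m}$, where $P_1, \dots, P_m$ are the distinct primes occurring among $Q_1, \dots, Q_n$ and $P_i$ occurs exactly $r_i$ times; likewise $I = {P_1'}^{s_1} \cdots {P_{m'}'}^{s_{m'}}$. The goal is then to show that $m = m'$ and that, after reordering, $P_i = P_i'$ and $r_i = s_i$ for every $i$, which is exactly the assertion that the multiset $\{Q_1, \dots, Q_n\}$ equals the multiset $\{Q_1', \dots, Q_{n'}'\}$.

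The key step, which I expect to be the main obstacle, is to check that each collected product remains irredundant when regarded as a product of the prime-power ideals $P_i^{r_i}$, and that $r_i = 1$ whenever $P_i$ is graded (and similarly for the primed data). For the first point, suppose that for some $i$ we had $P_1^{r_1}\cdots P_m^{r_m} = \prod_{l\neq i} P_l^{r_l}$, and put $X = \prod_{l \neq i} P_l^{r_l}$, so that $I = P_i^{r_i} X$ and the supposition reads $P_i^{r_i} X = X$. In the descending chain $P_i^{r_i} X \subseteq P_i^{r_i - 1} X \subseteq \cdots \subseteq P_i^0 X = X$ (with the convention $P_i^0 X = X$) the two endpoints now coincide, so $P_i^{r_i - 1} X = X = I$; but $P_i^{r_i - 1} X$ is precisely the product obtained by deleting one copy of $P_i$ from $Q_1 \cdots Q_n$, contradicting the irredundancy of $Q_1 \cdots Q_n$. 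For the second point, if $P_i$ is graded and $r_i \geq 2$, then $P_i^{r_i} = P_i = P_i^{r_i - 1}$ by Lemma~\ref{Product=Intersection}(1), so $P_i^{r_i}X = P_i^{r_i-1}X$, meaning that deleting one copy of $P_i$ from $Q_1 \cdots Q_n$ would not change the product; hence $r_i = 1$.

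With these facts established, Theorem~\ref{Uniqueness} applies to both collected products and yields $m = m'$ together with a permutation $\sigma$ of $\{1, \dots, m\}$ such that $P_i^{r_i} = {P_{\sigma(i)}'}^{s_{\sigma(i)}}$ as ideals of $L$, for each $i$. It then remains to recover the primes and multiplicities from these prime-power ideals. Since ${P_{\sigma(i)}'}^{s_{\sigma(i)}} = P_i^{r_i} \subseteq P_i$ and $P_i$ is prime, we get $P_{\sigma(i)}' \subseteq P_i$, and by symmetry $P_i \subseteq P_{\sigma(i)}'$, so $P_i = P_{\sigma(i)}'$; then if $P_i$ is graded we have $r_i = 1 = s_{\sigma(i)}$ by the previous paragraph, and if $P_i$ is non-graded the equality $P_i^{r_i} = P_i^{s_{\sigma(i)}}$ forces $r_i = s_{\sigma(i)}$ by Lemma~\ref{PrimeUniqueLemma} applied to the single-prime factorization of $P_i^{r_i}$. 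This gives the desired uniqueness.
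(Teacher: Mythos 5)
Your proposal is correct and follows essentially the same route as the paper's proof: collect repeated primes into powers of distinct primes using Theorem~\ref{arbitrary ideals}(4), observe that the collected product is still irredundant (with graded primes forced to have exponent $1$), and apply Theorem~\ref{Uniqueness}. The paper states these steps in two sentences without verification; your chain argument $P_i^{r_i}X \subseteq P_i^{r_i-1}X \subseteq \cdots \subseteq X$ and the final recovery of the primes and multiplicities from the prime-power ideals simply supply the details the paper leaves implicit.
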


\begin{proof}
By Theorem~\ref{arbitrary ideals}(4) and the hypothesis, we can write $I = J_{1}\cdots J_{m}$, where $J_1, \dots, J_m$ are powers of distinct prime ideals, and the product is irredundant. The desired conclusion now follows from Theorem~\ref{Uniqueness}.
\end{proof}

\section{Intersections of Prime-Power Ideals} \label{prime-intersect-section}

Our next goal is to show that for powers of distinct prime ideals in a Leavitt path algebra, the product coincides with the intersection. This result will be useful for translating statements about products of ideals into statements about intersections. We require two technical lemmas.

\begin{lemma} \label{gcd-lemma}
Let $L=L_K(E)$ be a Leavitt path algebra, let $(H,S)$ be an admissible pair, let $c$ be a cycle without exits in $E \backslash (H,S)$, and let $m$ be a positive integer. For each $i\in \{1, \dots, m\}$ let $I_i = I(H,S) + \langle f_i(c) \rangle$, where $f_i(x) \in K[x,x^{-1}]$ is nonzero. If $\, \gcd(f_{i}(x),f_{j}(x))=1$ for all $i\neq j$, then 
\[
I_1\cdots I_m = I(H,S) + \langle f_1(c) \cdots f_m(c) \rangle = I_1 \cap \cdots \cap I_m.
\]
\end{lemma}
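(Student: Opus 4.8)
The plan is to reduce the identity to the quotient $\bar L := L/I(H,S)$, which by \cite[Theorem~2.4.15]{AAS} may be identified with $L_K(F)$ for the quotient graph $F := E\backslash(H,S)$, in which $c$ is a genuine cycle without exits. Write $\pi\colon L\to\bar L$ for the quotient map and $g := f_1\cdots f_m$. Each $I_i$ contains $I(H,S)$, hence so does $I_1\cap\cdots\cap I_m$; and the product does too, since $I_1\cdots I_m\supseteq I(H,S)^m = I(H,S)$ by Lemma~\ref{Product=Intersection}(1). As $\ker\pi = I(H,S)$ lies in all of these ideals, and $I(H,S)+\langle f_i(c)\rangle = \pi^{-1}(\langle f_i(c)\rangle_{\bar L})$ by the very meaning of the notation (cf.\ Theorem~\ref{arbitrary ideals}(1)), the statement is equivalent, after applying $\pi$, to the claim that in $\bar L$
\[
\langle f_1(c)\rangle\cdots\langle f_m(c)\rangle \;=\; \langle g(c)\rangle \;=\; \langle f_1(c)\rangle\cap\cdots\cap\langle f_m(c)\rangle,
\]
to which $\pi^{-1}$ applies to give back $I_1\cdots I_m$, $\,I(H,S)+\langle g(c)\rangle$, and $I_1\cap\cdots\cap I_m$ respectively. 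So it suffices to prove the displayed identity in $\bar L$, where $c$ is an honest cycle without exits.

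To do this I would pass into the ideal $N := \langle\{c^0\}\rangle$ of $\bar L$, fixing (via Lemma~\ref{cycle-ideal-lemma}(1)) a ring isomorphism $\phi\colon N\xrightarrow{\ \sim\ }\M_Y(R)$ with $R := K[x,x^{-1}]$. By Remark~\ref{cycle-ideal-remark} and the construction behind it, $\phi$ may be taken so that $\phi(c) = \epsilon x$ and $\phi(v) = \epsilon$ for a diagonal matrix unit $\epsilon$, where $v := s(c)$; then $\phi(c^*) = \epsilon x^{-1}$ and $\phi(h(c)) = \epsilon\, h(x)$ for all $h\in R$. Since $f_i(c)\in N$ we have $\langle f_i(c)\rangle\subseteq N$, and the key point is that
\[
\phi\big(\langle f_i(c)\rangle\big) = \M_Y\big(f_i R\big), \qquad \phi\big(\langle g(c)\rangle\big) = \M_Y\big(gR\big).
\]
Here one uses that every element of $\langle f_i(c)\rangle$ is a finite sum of terms $a\,f_i(c)\,b$ with $a,b\in\bar L$; rewriting $a f_i(c) b = (av)\,f_i(c)\,(vb)$ via $f_i(c) = vf_i(c)v$, with $av,vb\in N$, shows that $\langle f_i(c)\rangle$ coincides with the ideal of the ring $N$ generated by $f_i(c)$, so that $\phi$ carries it onto the ideal of $\M_Y(R)$ generated by $\phi(f_i(c)) = \epsilon f_i(x)$, which an elementary matrix-unit computation identifies with $\M_Y(f_iR)$; likewise for $g$.

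To finish, note that $\langle f_1(c)\rangle\cdots\langle f_m(c)\rangle$ and $\langle f_1(c)\rangle\cap\cdots\cap\langle f_m(c)\rangle$ both lie in $\langle f_1(c)\rangle\subseteq N$, so they can be compared after applying $\phi$. Iterating Proposition~\ref{Morita equivalence}(2) gives $\phi\big(\langle f_1(c)\rangle\cdots\langle f_m(c)\rangle\big) = \M_Y(f_1R)\cdots\M_Y(f_mR) = \M_Y\big((f_1\cdots f_m)R\big) = \M_Y(gR)$. For the intersection, bijectivity of $\phi$ gives $\phi\big(\langle f_1(c)\rangle\cap\cdots\cap\langle f_m(c)\rangle\big) = \M_Y(f_1R)\cap\cdots\cap\M_Y(f_mR) = \M_Y\big(f_1R\cap\cdots\cap f_mR\big)$; and because $\gcd(f_i,f_j)=1$ for $i\ne j$, the principal ideals $f_iR$ are pairwise comaximal in the PID $R$, so $f_1R\cap\cdots\cap f_mR = (f_1\cdots f_m)R = gR$. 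Thus all three ideals have image $\M_Y(gR) = \phi(\langle g(c)\rangle)$ under the injective map $\phi$, which proves the identity in $\bar L$; pulling back along $\pi$ completes the proof.

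I expect the only real friction to be the identity $\phi(\langle f_i(c)\rangle) = \M_Y(f_iR)$ in the second step: since the elements $a,b$ in a typical generator $a f_i(c) b$ of $\langle f_i(c)\rangle$ need not lie in $N$, one must first use the idempotent $v$ to rewrite such a product as a product of elements of $N$ before $\phi$ can be applied, and then invoke the (routine but necessary) fact that the two-sided ideal of $\M_Y(R)$ generated by a single matrix unit times $a\in R$ is exactly $\M_Y(aR)$. Everything else — the passage to $\bar L$, the use of Proposition~\ref{Morita equivalence}, and the comaximality computation in $K[x,x^{-1}]$ — is routine.
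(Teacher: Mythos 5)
Your proof is correct and follows essentially the same route as the paper: reduce modulo $I(H,S)$, work inside $\langle\{c^0\}\rangle\cong\M_Y(K[x,x^{-1}])$ via Lemma~\ref{cycle-ideal-lemma}(1), Remark~\ref{cycle-ideal-remark}, and Proposition~\ref{Morita equivalence}, and use the gcd hypothesis to identify the intersection of the principal ideals $f_iK[x,x^{-1}]$ with the ideal generated by the product. The only cosmetic difference is that you rederive the product identity $\langle f_1(c)\rangle\cdots\langle f_m(c)\rangle=\langle f_1(c)\cdots f_m(c)\rangle$ through the matrix picture, whereas the paper just cites Lemma~\ref{Product}(1).
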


\begin{proof}
For each $i$ write $\bar{I}_i = I_i/I(H,S)$. Then, in $L_{K}(E\backslash(H,S))\cong L_{K}(E)/I(H,S)$, we have $\bar{I}_i  = \langle f_i(c) \rangle \subseteq \langle \{c^{0}\} \rangle$. Thus, in this ring, using Lemma~\ref{Product}(1), we have
\[
\bar{I}_1 \cdots \bar{I}_m = \langle f_1(c) \rangle \cdots \langle f_m(c) \rangle = \langle f_1(c) \cdots f_m(c) \rangle.
\]
Now, the hypothesis that $\gcd(f_{i}(x),f_{j}(x))=1$ for all $i\neq j$ implies that in $K[x,x^{-1}]$ we have $\operatorname{lcm}(f_{1}(x), \dots, f_{m}(x)) = f_{1}(x) \cdots f_{m}(x)$, and so
\[ 
\langle f_{1}(x) \cdots f_{m}(x)\rangle = \langle\operatorname{lcm}(f_{1}(x), \dots, f_{m}(x))\rangle = \langle f_{1}(x) \rangle \cap\cdots\cap\langle f_{m}(x)\rangle.
\]
By Lemma~\ref{cycle-ideal-lemma}(1), Remark~\ref{cycle-ideal-remark}, and Proposition~\ref{Morita equivalence}, we then have
\[
\bar{I}_1 \cdots \bar{I}_m = \langle f_1(c) \cdots f_m(c) \rangle = \langle f_1(c) \rangle \cap \cdots \cap \langle f_m(c) \rangle = \bar{I}_1 \cap \cdots \cap \bar{I}_m.
\]
It follows that $I_1\cdots I_m = I_1 \cap \cdots \cap I_m$ and $I_1\cdots I_m = I(H,S) + \langle f_1(c) \cdots f_m(c) \rangle$, as desired.
\end{proof}

\begin{lemma} \label{distinct-gr-lemma}
Let $L=L_K(E)$ be a Leavitt path algebra, let $m$ be a positive integer, and let $H_1, \dots, H_m$ be distinct hereditary saturated subsets of $E^0$. Also, for each $i \in \{1, \dots, m\}$, let $c_i$ be a cycle without exits in $E\backslash(H_{i},B_{H_i})$ such that $E^{0} \setminus H_i = M(s(c_i))$, and let $I_i = I(H_i,B_{H_i}) + \langle f_i(c_i) \rangle$ for some $f_i(x)\in K[x,x^{-1}]$. Then $I_1\cdots I_m = I_1 \cap \cdots \cap I_m$.
\end{lemma}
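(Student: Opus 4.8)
The plan is as follows. Write $G_i = I(H_i, B_{H_i})$ and $A_i = \langle f_i(c_i)\rangle$, so each $I_i = G_i + A_i$ with $G_i$ graded. Since $I_iI_j \subseteq I_i \cap I_j$ for all $i,j$, the inclusion $I_1\cdots I_m \subseteq I_1\cap\cdots\cap I_m$ is automatic, and the whole content is the reverse inclusion. I would first reduce to the case in which $H_1,\dots,H_m$ are pairwise incomparable under set inclusion, and then, in that case, prove the sharper identity
\[
I_1\cap\cdots\cap I_m \;=\; (G_1\cap\cdots\cap G_m)+(A_1+\cdots+A_m) \;=\; I_1\cdots I_m .
\]

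Everything hinges on three elementary facts about the graph, all consequences of the hypothesis $E^0\setminus H_i = M(s(c_i))$. First, for all $i,j$ one has $s(c_i)\geq s(c_j)$ if and only if $H_j\subseteq H_i$ (since $H_j\subseteq H_i \iff M(s(c_i))\subseteq M(s(c_j))$); hence, for $i\neq j$, $s(c_i)\in H_j$ if and only if $H_j\not\subseteq H_i$. Second, since $H_j$ is hereditary and $s(c_i)\geq v$ for every $v\in\{c_i^0\}$, the condition $s(c_i)\in H_j$ forces $\{c_i^0\}\subseteq H_j$, and then $A_i \subseteq \langle\{c_i^0\}\rangle \subseteq \langle H_j\rangle \subseteq G_j$; so for $i\neq j$, $H_j\not\subseteq H_i \Rightarrow A_i\subseteq G_j$. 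Third, if $H_i\subseteq H_j$ then $G_i\subseteq G_j$; via the description of graded ideals by admissible pairs it suffices to see $B_{H_i}\subseteq H_j\cup B_{H_j}$, and for a breaking vertex $v$ of $H_i$ lying outside $H_j$ we have $v\in M(s(c_j))$, which supplies an edge out of $v$ with range in $E^0\setminus H_j$, while the finitely many edges out of $v$ escaping $H_j$ all escape $H_i$ too — so $v\in B_{H_j}$.

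These facts drive the reduction: if $H_i\subsetneq H_j$ for some $i\neq j$, then $A_i\subseteq G_j$ and $G_i\subseteq G_j$, so $I_i\subseteq G_j\subseteq I_j$. Setting $J = \prod_{k\neq j}I_k \subseteq I_i \subseteq G_j$, Lemma~\ref{Product=Intersection}(1) gives $JI_j\supseteq JG_j = J\cap G_j = J$, while $JI_j\subseteq J$; hence $I_1\cdots I_m = J$. Likewise $\bigcap_{k\neq j}I_k\subseteq I_i\subseteq I_j$ gives $I_1\cap\cdots\cap I_m = \bigcap_{k\neq j}I_k$. So $I_j$ may be deleted without affecting either side; iterating finitely often leaves a subcollection with pairwise incomparable hereditary saturated sets (and if just one ideal remains there is nothing to prove). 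Now in the incomparable case, $H_j\not\subseteq H_i$ for all $i\neq j$, so $A_i\subseteq G_j$ for all $j\neq i$, i.e.\ $A_i\subseteq\bigcap_{j\neq i}G_j$. Then $(G_1\cap\cdots\cap G_m)+(A_1+\cdots+A_m)\subseteq I_1\cdots I_m$, because $\bigcap_kG_k=\prod_kG_k\subseteq\prod_kI_k$ by Lemma~\ref{Product=Intersection}(2) and $A_i\subseteq(\bigcap_{j\neq i}G_j)\cap I_i = (\bigcap_{j\neq i}G_j)I_i\subseteq\prod_kI_k$ (again Lemma~\ref{Product=Intersection}(1)); conversely $I_1\cdots I_m\subseteq(G_1\cap\cdots\cap G_m)+(A_1+\cdots+A_m)$, since expanding $\prod_i(G_i+A_i)$ (using commutativity, Theorem~\ref{arbitrary ideals}(4)) produces the term $\prod_iG_i=\bigcap_iG_i$ together with products each containing some factor $A_j$, hence lying in $A_j$; and finally $I_1\cap\cdots\cap I_m\subseteq(G_1\cap\cdots\cap G_m)+(A_1+\cdots+A_m)$, because given $x$ in the intersection, writing $x=g_i+a_i$ with $g_i\in G_i$, $a_i\in A_i$, the element $y=x-(a_1+\cdots+a_m)$ equals $g_k-\sum_{j\neq k}a_j\in G_k$ for each $k$ (as $a_j\in A_j\subseteq G_k$ when $j\neq k$), so $y\in\bigcap_kG_k$ and $x=y+\sum_ja_j$. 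Chaining $I_1\cdots I_m\subseteq I_1\cap\cdots\cap I_m\subseteq (G_1\cap\cdots\cap G_m)+(A_1+\cdots+A_m)\subseteq I_1\cdots I_m$ then forces equality throughout.

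Most of the above is bookkeeping with Lemma~\ref{Product=Intersection} and commutativity of ideal multiplication. The step I expect to be the real obstacle is the third graph fact — that $H_i\subseteq H_j$ implies $G_i\subseteq G_j$ — since it is exactly here that one must exploit the precise shape $E^0\setminus H = M(s(c))$ of the sets involved (for arbitrary hereditary saturated $H$'s this implication can fail at breaking vertices), and the argument in the incomparable case collapses if that inclusion of graded parts is unavailable.
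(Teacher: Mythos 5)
Your proof is correct, and it takes a genuinely different route from the paper's. The paper applies the structure theorem for arbitrary ideals (Theorem~\ref{arbitrary ideals}(1)) to the intersection $N=I_1\cap\cdots\cap I_m$, writing $N=I(H,S)+\sum_{i}\langle g_i(d_i)\rangle$, and then identifies each cycle $d_i$ with exactly one $c_j$ (using that $c_j$ is the unique exit-free cycle in $E\backslash(H_j,B_{H_j})$ and that the $c_j$ are pairwise distinct); this places $g_i(d_i)$ in $I_j\cap\bigcap_{k\neq j}I(H_k,B_{H_k})$, which by Lemma~\ref{Product=Intersection}(1) is a product contained in $I_1\cdots I_m$. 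You avoid the structure theorem for $N$ entirely: from the hypothesis $E^0\setminus H_i=M(s(c_i))$ you extract the order-theoretic facts that $s(c_i)\in H_j$ iff $H_j\not\subseteq H_i$ (whence $\langle f_i(c_i)\rangle\subseteq I(H_j,B_{H_j})$ in that case) and that $H_i\subseteq H_j$ forces $I(H_i,B_{H_i})\subseteq I(H_j,B_{H_j})$, use these to discard any $I_j$ whose $H_j$ properly contains some $H_i$, and then, in the pairwise-incomparable case, prove the explicit closed form $I_1\cap\cdots\cap I_m=\bigl(\bigcap_i I(H_i,B_{H_i})\bigr)+\sum_i\langle f_i(c_i)\rangle=I_1\cdots I_m$ by a direct element computation. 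Your argument is more self-contained and yields a sharper description of the intersection; the paper's is shorter because Theorem~\ref{arbitrary ideals}(1) does the bookkeeping. You also correctly identify, and correctly verify, the one point where the special shape of the $H_i$ is indispensable, namely the inclusion of graded parts at breaking vertices (which can indeed fail for arbitrary hereditary saturated sets).
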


\begin{proof}
Since the $c_i$ are without exits in $E\backslash(H_{i},B_{H_i})$, the $H_i$ are distinct, and $E^{0} \setminus H_i = M(s(c_i))$ for each $i$, necessarily the cycles $c_i$ are distinct. Letting $N = I_1 \cap \cdots \cap I_m$, by Theorem~\ref{arbitrary ideals}(1), $N=I(H,S)+\sum_{i\in Y} \langle g_{i}(d_{i})\rangle$, where each $d_i$ is a cycle without exits in $E \setminus (H,S)$, and each $g_i(x) \in K[x]$ has a nonzero constant term. By Lemma~\ref{Product=Intersection}(2), 
\[
I(H,S) = I(H_1,B_{H_1}) \cap \cdots \cap I(H_m,B_{H_m}) = I(H_1,B_{H_1}) \cdots I(H_m,B_{H_m})  \subseteq I_1\cdots I_m.
\]
Since $I_1\cdots I_m \subseteq I_1 \cap \cdots \cap I_m$ for any ideals $I_i$ in any ring, it therefore suffices to show that $\langle g_{i}(d_{i})\rangle \subseteq I_1\cdots I_m$ for arbitrary $i \in Y$. 

It cannot be the case that $d_{i} \in I(H_1,B_{H_1}) \cap \cdots \cap I(H_m,B_{H_m})$, since then we would have $d_{i} \in I(H,S)$, contrary to the choice of $d_i$. Thus, without loss of generality we may assume that $d_{i} \notin I(H_1,B_{H_1})$. Then $d_i$ is a cycle in $E \backslash (H_{1},B_{H_1})$. Since $E\backslash(H_{1},B_{H_1})$ is a subgraph of $E \backslash (H,S)$, we conclude that $d_{i}$ has no exits in $E \backslash (H_{1},B_{H_1})$. The hypotheses on $c_1$ then imply that $d_{i}=c_{1}$. Thus it cannot be the case that $d_{i} \notin I(H_j,B_{H_j})$ for some $j > 1$, since otherwise the same argument would show that $d_i=c_j$, contradicting the distinctness of the cycles $c_j$. Therefore, by Lemma~\ref{Product=Intersection}(1), we have
\[
g_{i}(d_{i}) \in I_1 \cap \bigcap_{j = 2}^m I(H_j,B_{H_j}) = I_1 \cdot \prod_{j = 2}^m I(H_j,B_{H_j})\subseteq I_1\cdots I_m,
\] 
which implies that $\langle g_{i}(d_{i})\rangle \subseteq I_1\cdots I_m$, as desired.
\end{proof}

We are now ready to show that for powers of distinct prime ideals in a Leavitt path algebra, the product coincides with the intersection. A version of this result, for row-finite graphs, can be found in an unpublished thesis of van den Hove~\cite[Corollary 5.3.3]{H}, where the proof uses lattice-theoretic properties of ideals, represented as ordered triples of vertices, cycles, and polynomials.

\begin{proposition} \label{prime-power-intersect} 
Let $L=L_K(E)$ be a Leavitt path algebra, let $m, r_{1}, \dots, r_{m}$ be positive integers, and let $P_{1}, \dots, P_{m}$ be distinct prime ideals of $L$. Then 
\[
P_{1}^{r_{1}}\cdots P_{m}^{r_{m}} = P_{1}^{r_{1}}\cap\cdots\cap P_{m}^{r_{m}}.
\]
\end{proposition}

\begin{proof}
Upon reindexing, we may assume that $P_{k+1}, \dots, P_m$ are graded, and $P_1, \dots, P_k$ are not, for some $0 \leq k \leq m$. Then, by Lemma~\ref{Product=Intersection}(1), 
\[
(P_{1}^{r_{1}}\cap \cdots \cap P_{k}^{r_{k}})P_{k+1}^{r_{1}} \cdots P_{m}^{r_{m}} = P_{1}^{r_{1}}\cap\cdots\cap P_{m}^{r_{m}},
\]
and so, by Theorem~\ref{arbitrary ideals}(4), it suffices to show that 
\[
P_{1}^{r_{1}}\cdots P_{k}^{r_{k}} = P_{1}^{r_{1}}\cap\cdots\cap P_{k}^{r_{k}},
\]
where we may assume that $k\geq 2$.

Now, suppose, upon further reindexing, that $\mathrm{gr}(P_1) = \dots = \mathrm{gr}(P_n) = I(H,B_H)$ for some $1 < n \leq k$. By Theorem~\ref{specificprime}, there can be only one cycle $c$ with no exits in $E\backslash(H,B_H)$, and so $P_{i} = I(H,B_H) + \langle p_i(c) \rangle$, where $p_i(x) \in K[x,x^{-1}]$ is irreducible, for each $1 \leq i \leq n$. Since the $P_i$ are distinct, necessarily $\gcd(p_{i}(x),p_{j}(x))=1$, and hence also $\gcd(p_{i}(x)^{r_{i}},p_{j}(x)^{r_{j}})=1$, for all $i\neq j$. Therefore, applying Lemma~\ref{gcd-lemma} to the $P_{i}^{r_{i}} = I(H,B_H) + \langle p_{i}(x)^{r_{i}} \rangle$, we conclude that 
\[
P_{1}^{r_{1}}\cdots P_{n}^{r_{n}} = P_{1}^{r_{1}}\cap\cdots\cap P_{n}^{r_{n}} = I(H,B_H) + \langle p_{1}(c)^{r_{1}} \cdots p_{n}(c)^{r_{n}} \rangle.
\]
In this fashion, using Theorem~\ref{arbitrary ideals}(4), we can group the ideals $P_{i}^{r_{i}}$ in $P_{1}^{r_{1}}\cdots P_{k}^{r_{k}}$ having the same graded part, to obtain a product of ideals of the from $I(H,B_H) + \langle f(c) \rangle$, which satisfy the hypotheses of Lemma~\ref{distinct-gr-lemma}. The desired conclusion then follows from that lemma.
\end{proof}

\begin{remark}
The previous result implies that an ideal in a Leavitt path algebra is the product of prime-power ideals if and only if it is the intersection of finitely many prime-power ideals. In contrast to this, a product of prime ideals may not be expressible as an intersection of prime ideals. For example, if $P$ is a non-graded prime ideal in a Leavitt path algebra, then $P^2$ is not the intersection of any collection of prime ideals, according to~\cite[Lemma 4.9]{AMR}. (This can also be concluded from Theorem~\ref{uniqueness-intersect}.)
\end{remark}

\begin{remark}
We note also that Proposition~\ref{prime-power-intersect} describes a property that is special to Leavitt path algebras, which does not in general hold even for commutative unital rings. For example, let $K$ be a field, let $R = K[x,y]$ be the polynomial ring in two variables, and consider the ideals $I = \langle x,y \rangle$ and $J = \langle x \rangle$ of $R$. Then it is easy to see that $I$ and $J$ are prime (and distinct), but $ IJ = \langle x^2,xy \rangle  \neq J = I \cap J$.
\end{remark}

Recall that given a nonempty collection $\{S_i\mid i \in Y\}$ of sets, the intersection $\bigcap_{i\in Y} S_i$ is \emph{irredundant} if either $|Y|=1$, or $\bigcap_{i\in Y \setminus \{j\}} S_i \not\subseteq S_j$ for all $j \in Y$. Similarly, the union $\bigcup_{i\in Y} S_i$ is \emph{irredundant} if either $|Y|=1$, or $S_j \not\subseteq \bigcup_{i\in Y \setminus \{j\}} S_i$ for all $j \in Y$.

We are now ready for the main result of this section, which is an analogue of Theorem~\ref{Uniqueness} for intersections.

\begin{theorem} \label{uniqueness-intersect}
Let $L$ be a Leavitt path algebra, and let $I$ be an irredundant intersection of finitely many prime-power ideals of $L$. Then the ideals in the intersection $I$ are uniquely determined.
\end{theorem}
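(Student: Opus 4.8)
The plan is to reduce the uniqueness statement for irredundant intersections to the already-established uniqueness statement for irredundant products (Theorem~\ref{Uniqueness}), by means of Proposition~\ref{prime-power-intersect}. Suppose $I = I_1 \cap \cdots \cap I_m = J_1 \cap \cdots \cap J_n$, where each $I_a$ and each $J_b$ is a prime-power ideal, and both intersections are irredundant. First I would observe that an irredundant intersection of prime-power ideals must involve powers of \emph{distinct} prime ideals: if $I_a = P^{r}$ and $I_{a'} = P^{r'}$ were powers of the same prime $P$ with, say, $r \leq r'$, then $P^{r} \supseteq P^{r'}$ (by the Remark following Lemma~\ref{Properties of primes}, since $P^{r'} \subseteq P^{r}$), so $I_{a'} \subseteq I_a$ and $I_a$ could be dropped from the intersection without changing it, contradicting irredundancy. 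More generally the same argument handles the case $P_a \subseteq P_{a'}$: then $P_a^{r_a} \subseteq P_{a'} $, and using Lemma~\ref{Properties of primes} together with the Remark, $P_{a'}^{r_{a'}} \supseteq P_a^{r_a}$ whenever $P_a \subsetneq P_{a'}$ as well, so again $I_{a'}$ is redundant. Hence in an irredundant intersection the underlying prime ideals are pairwise incomparable, and in particular distinct.

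With that in hand, I would invoke Proposition~\ref{prime-power-intersect} to convert both intersections into products: $I = I_1 \cap \cdots \cap I_m = I_1 \cdots I_m$ and $I = J_1 \cap \cdots \cap J_n = J_1 \cdots J_n$. The next step is to check that these products are \emph{irredundant} as products. Suppose, to get a contradiction, that $I_1 \cdots I_m = I_1 \cdots I_{a-1} I_{a+1} \cdots I_m$ for some $a$. The right side, being again a product of powers of distinct primes, equals the corresponding intersection $\bigcap_{a' \neq a} I_{a'}$ by Proposition~\ref{prime-power-intersect}; but the left side equals $\bigcap_{a'} I_{a'}$. Hence $\bigcap_{a' \neq a} I_{a'} = \bigcap_{a'} I_{a'} \subseteq I_a$, contradicting the irredundancy of the intersection. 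So the product $I_1 \cdots I_m$ is irredundant, and likewise $J_1 \cdots J_n$ is irredundant. Theorem~\ref{Uniqueness} now applies directly: the prime-power ideals $I_1, \dots, I_m$ coincide with $J_1, \dots, J_n$ up to reordering, which is exactly the conclusion we want for the intersection.

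I do not anticipate a serious obstacle here; the entire argument is a bookkeeping exercise translating between $\cap$ and $\cdot$ via Proposition~\ref{prime-power-intersect}, with the only genuine content being the observation that irredundancy of the intersection forces the underlying primes to be pairwise incomparable (so that Theorem~\ref{Uniqueness} is applicable) and that irredundancy transfers from the intersection to the product. The one point that requires a little care is making sure each $I_a$ is first written as a power $P_a^{r_a}$ with $r_a$ minimal — we may take $r_a = 1$ when $P_a$ is graded, by Lemma~\ref{Product=Intersection}(1) — so that the incomparability argument via Lemma~\ref{Properties of primes} and the Remark following it goes through cleanly; but this is routine.
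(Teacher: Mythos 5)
Your proof is correct and follows essentially the same route as the paper: use irredundancy together with Lemma~\ref{Properties of primes} to force the underlying primes to be pairwise incomparable (hence distinct), convert both intersections to products via Proposition~\ref{prime-power-intersect}, and invoke the product-uniqueness result. The only cosmetic difference is that the paper applies Lemma~\ref{PrimeUniqueLemma} directly (whose hypotheses are exactly the incomparability and normalization you established), which lets it skip your extra step of verifying that the resulting products are irredundant.
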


\begin{proof}
Suppose that
\[
P_{1}^{r_{1}}\cap\cdots\cap P_{m}^{r_{m}} = Q_{1}^{s_{1}} \cap\cdots\cap Q_{n}^{s_{n}}
\]
are irredundant intersections, where $m, n, r_i, s_i$ are positive integers, and the $P_i$ and $Q_i$ are prime ideals of $L$. By Lemma~\ref{Product=Intersection}(1), $P_i^{r_i} = P_i$ whenever $P_i$ is graded, and likewise for the $Q_i$. So we may assume that $r_i=1$ whenever $P_i$ is graded, and likewise for the $Q_i$. We may also assume that $m>1$, since otherwise there is nothing to prove.

Next, suppose that $P_{i} \subseteq P_{j}$ for some $i \neq j$. Since the intersection is irredundant, the $P_i$ must be distinct, and so $P_{i} \subsetneq P_{j}$. Then, by Lemma~\ref{Properties of primes}, $P_{j}P_{i} = P_{i}$. It follows that $P_{j}^{r_{j}}P_{i}^{r_{i}} = P_{i}^{r_{i}}$, which in turn implies that $P_{j}^{r_{j}}\cap P_{i}^{r_{i}} = P_{i}^{r_{i}}$. We can then replace $P_{j}^{r_{j}}\cap P_{i}^{r_{i}}$ with $P_{i}^{r_{i}}$ in the intersection $P_{1}^{r_{1}}\cap \cdots\cap P_{m}^{r_{m}}$, which contradicts the supposition that it is irredundant. Thus $P_i \not\subseteq P_j$ for all $i \neq j$, and likewise for the $Q_i$.

Since the $P_i$ are distinct, as are the $Q_i$, by Proposition~\ref{prime-power-intersect}, we have
\[
P_{1}^{r_{1}}\cdots P_{m}^{r_{m}} = Q_{1}^{s_{1}} \cdots Q_{n}^{s_{n}}.
\]
Then Lemma~\ref{PrimeUniqueLemma} implies that $m=n$, and that there is a permutation $\sigma$ of $\{1, \dots, m\}$, such that $P_i = Q_{\sigma(i)}$ and $r_i = s_{\sigma(i)}$ for each $i \in \{1, \dots, m\}$, giving the desired conclusion.
\end{proof}

We conclude this section with a couple of technical lemmas, which will useful subsequently. The first is an expanded version of part of the proof of~\cite[Theorem 6.2]{R-2}.

\begin{lemma} \label{unique-cycle-lemma}
Let $L=L_K(E)$ be a Leavitt path algebra, let $I$ be a non-graded ideal of $L$, and write $I=I(H,S)+\sum_{i\in Y} \langle f_{i}(c_{i})\rangle$, using the notation of Theorem \ref{arbitrary ideals}(1). Suppose further that $I(H,S)= \bigcap_{i=1}^{m} P_{i}$ for some graded prime ideals $P_{i}=I(H_{i},S_{i})$. Then the following hold.
\begin{enumerate}
\item[$(1)$] For each $i \in Y$ there is an $l \in \{1, \dots, m\}$ such that $s(c_{i}) \notin P_{l}$. Moreover, $s(c_{j}) \in P_{l}$ for all $j \neq i$, $S_{l}= B_{H_{l}}$, and $u\geq s(c_{i})$ for every $u \in E^{0}\backslash H_{l}$.

\item[$(2)$] $|Y| \leq m$. 

\item[$(3)$] Given $i \in Y$, if $l, k \in \{1, \dots, m\}$ are such that $s(c_{i}) \notin P_{l} \cup P_{k}$, then $P_{l}\cap P_{k}$ is a graded prime ideal.
\end{enumerate}

Moreover, if $\, \bigcap_{i=1}^{m} P_{i}$ is an irredundant intersection, then the following hold.
\begin{enumerate}
\item[$(4)$] Upon reindexing the $c_{i}$ and $P_{i}$, and letting $|Y| = n \leq m$, for each $i \in \{1, \dots, n\}$ we have $s(c_{i}) \notin P_{i}$ and $s(c_{i}) \in P_{j}$ for all $j \in \{1, \dots, m\} \setminus \{i\}$.

\item[$(5)$] $I = \prod_{i=1}^n (I(H_{i},B_{H_{i}}) + \langle f_{i}(c_{i})\rangle) \cdot \prod_{i=n+1}^m I(H_{i},B_{H_{i}})$, where the $P_{i} = I(H_{i},B_{H_{i}})$ are indexed as in $(4)$.
\end{enumerate}
\end{lemma}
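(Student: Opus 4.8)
\textbf{Proof proposal for Lemma~\ref{unique-cycle-lemma}.}

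The plan is to work entirely with the structure theory for ideals from Theorem~\ref{arbitrary ideals} and the classification of graded primes from Theorem~\ref{specificprime}, exploiting that a graded prime $P_l = I(H_l,S_l)$ satisfies $s(c_i)\in P_l$ if and only if $s(c_i)\in H_l$ (since $H_l = P_l\cap E^0$), and that $H_l$ hereditary forces $\{c_i^0\}\subseteq H_l$ once $s(c_i)\in H_l$, so then $\langle f_i(c_i)\rangle \subseteq \langle\{c_i^0\}\rangle \subseteq P_l$. For part~(1): fix $i\in Y$. Since $f_i(c_i)\in I = \bigcap_j P_j \cup (\text{cycle part})$ but more precisely $f_i(c_i)\notin I(H,S)=\bigcap_{j=1}^m P_j$ — this is the crucial point, guaranteed by the minimality/choice in Theorem~\ref{arbitrary ideals}(1), since $c_i$ is a cycle without exits in $E\setminus(H,S)$ and $f_i(c_i)\in I$, $f_i$ of smallest degree, so $c_i \notin I(H,S)$ and hence $s(c_i)\notin H = I(H,S)\cap E^0$. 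Therefore there is some $l$ with $s(c_i)\notin H_l$, i.e. $s(c_i)\notin P_l$. By Theorem~\ref{specificprime}, a graded prime $P_l = I(H_l,S_l)$ has $E^0\setminus H_l$ downward directed when $S_l = B_{H_l}$, or $E^0\setminus H_l = M(u)$ when $S_l = B_{H_l}\setminus\{u\}$; in either case $c_i$ is a cycle without exits living in $E\setminus(H_l, S_l)$ (as $E\setminus(H_l,S_l)$ is a ``subquotient'' sitting inside $E\setminus(H,S)$, so exits cannot appear). A cycle without exits in a graph whose vertex set is downward directed forces every vertex to reach $s(c_i)$, and it also rules out the breaking-vertex case (a cycle without exits has all its vertices regular, so the $M(u)$ case with $S_l=B_{H_l}\setminus\{u\}$ cannot have $c_i$ avoiding $H_l$ unless still $u\ge s(c_i)$, and one checks $S_l=B_{H_l}$ must hold); this gives $S_l = B_{H_l}$ and $u\ge s(c_i)$ for all $u\in E^0\setminus H_l$. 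Finally, for $j\ne i$, $s(c_j)$ must lie in $P_l$: otherwise $s(c_j)\notin H_l$, so $s(c_j)\in E^0\setminus H_l$, whence $s(c_j)\ge s(c_i)$; but $c_j$ has no exits in $E\setminus(H,S)$ and $s(c_i)$ lies on a cycle, and two distinct cycles without exits cannot have one reachable from the other (the path from $s(c_j)$ to $s(c_i)$ would have to exit $c_j$), a contradiction — so $s(c_j)\in P_l$.

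Part~(2) is then immediate: the assignment $i\mapsto l$ from~(1) is injective, because if $i\ne i'$ both mapped to the same $l$ we would get both $s(c_i)\notin P_l$ and (applying the ``moreover'' of~(1) with the roles as stated) $s(c_i)\in P_l$; hence $|Y|\le m$. For part~(3): given $i\in Y$ and $l,k$ with $s(c_i)\notin P_l\cup P_k$, both $P_l$ and $P_k$ have $S_l=B_{H_l}$, $S_k=B_{H_k}$ and contain every vertex except those reaching $s(c_i)$; so $H_l\cap H_k$ is hereditary and saturated, $I(H_l\cap H_k, B_{H_l\cap H_k}) = P_l\cap P_k$ is graded by Lemma~\ref{Product=Intersection}(2) and Theorem~\ref{arbitrary ideals}(3), and $E^0\setminus(H_l\cap H_k) = (E^0\setminus H_l)\cup(E^0\setminus H_k)$ is downward directed since every element of it reaches $s(c_i)$ and near $s(c_i)$ the cycle $c_i$ with no exits forces a common lower bound; then Theorem~\ref{specificprime}(1) shows $P_l\cap P_k$ is a graded prime ideal.

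For the ``moreover'' under irredundancy, part~(4): by~(1)--(2) we have $|Y|=n\le m$ and an injection $i\mapsto l(i)$; reindex so $l(i)=i$ for $i\in\{1,\dots,n\}$, giving $s(c_i)\notin P_i$ and, by the ``moreover'' of~(1), $s(c_j)\in P_i$ for $j\ne i$ in $\{1,\dots,n\}$. It remains to see $s(c_i)\in P_j$ for $j\in\{n+1,\dots,m\}$ as well: if not, then $s(c_i)\notin P_j$, and applying~(3) to the pair $i,j$ (using $s(c_i)\notin P_i\cup P_j$) we learn $P_i\cap P_j$ is prime; irredundancy of $\bigcap P_i$ then forces a contradiction with $P_j\not\subseteq P_i$ and $P_i\not\subseteq P_j$ — more carefully, one shows via~(1) that $P_j$ would also ``capture'' the cycle $c_i$ in the sense that $\bigcap_{t\ne j}P_t\subseteq P_j$, using that every $c_t$ with $t\ne i$ has $s(c_t)\in P_j$ (since the only index not forcing $s(c_t)$ into a given $P$ is $t$ itself) together with the structure of $I(H,S)=\bigcap P_t$, contradicting irredundancy. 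Finally part~(5): group the factors. Write $I(H,S) = \bigcap_{i=1}^m P_i = \prod_{i=1}^m P_i$ by Lemma~\ref{Product=Intersection}(2). For each $i\in\{1,\dots,n\}$, $f_i(c_i)$ lies in $P_j$ for every $j\ne i$ (by~(4) for $j\le n$ since $\{c_i^0\}\subseteq H_j$, and by~(4) again for $j>n$), hence $\langle f_i(c_i)\rangle \subseteq \bigcap_{j\ne i}P_j = \prod_{j\ne i}P_j$, so $I(H_i,B_{H_i}) + \langle f_i(c_i)\rangle$ absorbs the other factors appropriately; assembling, $\prod_{i=1}^n(I(H_i,B_{H_i})+\langle f_i(c_i)\rangle)\cdot\prod_{i=n+1}^m I(H_i,B_{H_i})$ equals $I(H,S) + \sum_{i=1}^n \langle f_i(c_i)\rangle = I$, using Lemma~\ref{Product}(1) to see the cycle-parts multiply out to $\sum\langle f_i(c_i)\rangle$ rather than collapsing, since the relevant polynomials attached to distinct cycles annihilate each other by Lemma~\ref{Product}(2) and the graded parts carry the $I(H,S)$ piece.

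\textbf{Anticipated main obstacle.} The delicate point is part~(1)'s claim that $S_l = B_{H_l}$ (ruling out Theorem~\ref{specificprime}(2)) and, relatedly, the bookkeeping in part~(4) that every $P_j$ with $j>n$ also contains $s(c_i)$ — this is where irredundancy must be used in exactly the right way, and it requires carefully tracking how the cycles $c_i$ interact with the downward-directedness of the complements $E^0\setminus H_j$. The argument that two distinct cycles without exits (in the ambient quotient) cannot be joined by a path is the geometric heart of everything and should be isolated as a clean sub-claim.
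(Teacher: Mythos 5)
Your overall architecture mirrors the paper's: show $s(c_i)\notin H$, locate an $l$ with $s(c_i)\notin P_l$, use downward directedness of $E^{0}\setminus H_{l}$ together with the absence of exits for $c_i$ to pin down $S_l$ and to exclude the sources of the other cycles from $E^{0}\setminus H_{l}$, then leverage (3) to prove (4), and compute the product for (5). Parts (1) and (2) are essentially right; the point you defer with ``one checks $S_l=B_{H_l}$'' is settled exactly as you suspect (if $S_l=B_{H_l}\setminus\{u\}$, then $s(c_i)\geq u$ forces $u\in\{c_i^{0}\}$ since $c_i$ has no exits, and the duplicated edge into $u'$ in $E\setminus(H_l,S_l)$ would then be an exit for $c_i$). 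Your direct expansion in (5) — cross terms $\langle f_i(c_i)\rangle\langle f_j(c_j)\rangle$ fall into $I(H,S)$ by Lemma~\ref{Product}(2), single-cycle terms are absorbed because $\langle f_i(c_i)\rangle\subseteq P_j$ for $j\neq i$ and graded ideals satisfy Lemma~\ref{Product=Intersection}(1) — is a legitimate alternative to the paper's induction and arguably cleaner.

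There are, however, two genuine gaps. First, in (3) you write $P_l\cap P_k=I(H_l\cap H_k,B_{H_l\cap H_k})$ as though it were automatic. What is automatic is only that $P_l\cap P_k=I(H',S')$ with $H'=H_l\cap H_k$; the substantive claim, which is the bulk of the paper's proof of (3), is that $S'=B_{H'}$. This requires checking that $u^{H'}\in P_l\cap P_k$ for every $u\in B_{H'}$, by first showing $u\in B_{H_l}\cap B_{H_k}$ (using $u\geq s(c_i)$) and then writing $u^{H'}$ as $u^{H_l}$ minus a finite sum of elements $ee^{*}$ with $r(e)\in H_l$, and symmetrically for $H_k$. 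Without $S'=B_{H'}$ you cannot invoke Theorem~\ref{specificprime}(1). Second, in (4) the mechanism producing the contradiction is missing: the correct step is that the graded prime $P_l\cap P_k$ contains the product $P_lP_k$, so primeness forces $P_l\subseteq P_l\cap P_k$ or $P_k\subseteq P_l\cap P_k$, i.e.\ $P_l$ and $P_k$ are comparable, which contradicts irredundancy of $\bigcap_{t}P_t$. Your ``more careful'' substitute — that $\bigcap_{t\neq j}P_t\subseteq P_j$ because every $s(c_t)$ with $t\neq i$ lies in $P_j$ — does not work: membership of finitely many vertices in $P_j$ gives no containment of the graded ideal $\bigcap_{t\neq j}P_t$ in $P_j$ (that intersection contains far more than the ideal generated by those vertices). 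The missing ingredient in both places is the systematic use of the fact that for graded ideals intersection equals product (Lemma~\ref{Product=Intersection}), which is how the paper converts lattice information into primeness information.
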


\begin{proof}
(1) Let $i \in Y$. Then there is an $l \in \{1, \dots, m\}$ such that $s(c_{i}) \notin P_{l}$, since otherwise we would have $s(c_{i}) \in \bigcap_{i=1}^{m} P_{i}$, and hence $c_{i} \in I(H,S)$, contrary to hypothesis.

Since $c_{i}$ is a cycle without exists in $E\setminus (H,S)$, and $(H,S) \leq (H_{l},S_{l})$, it follows that $c_{i}$ has no exits in $E\setminus (H_{l},S_{l})$ as well. Now, by Theorem~\ref{specificprime}, $E^{0}\backslash H_{l}$ is downward directed, and so we must have $u\geq s(c_{i})$ for every $u \in E^{0}\backslash H_{l}$. Then it cannot be the case that $s(c_{i}) \in B_{H_{l}}$ and $S_{l} = B_{H_{l}} \setminus \{s(c_{i})\}$, since in that situation $c_{i}$ would have an exit in $E\setminus (H_{l},S_{l})$, by construction. Thus, by Theorem \ref{specificprime}, $S_{l} = B_{H_{l}}$. Moreover, since $E^{0}\backslash H_{l}$ is downward directed, it cannot be the case that $s(c_{j}) \in E^{0}\backslash H_{l}$ for some cycle $c_{j}$ (without exists) that is different from $c_{i}$. Hence $s(c_{j}) \in H_{l} \subseteq P_{l}$ for all $j \neq i$. 

(2) This follows immediately from (1).

(3) Suppose that $s(c_{i}) \notin P_{l} \cup P_{k}$ for some $l, k\in \{1, \dots, m\}$, and let $P'=P_{l}\cap P_{k}$. Then, by (1), $u\geq s(c_{i})$ for every $u \in (E^{0}\backslash H_{l}) \cup (E^{0}\backslash H_{k})$, $S_{l} = B_{H_{l}}$, and $S_{k} = B_{H_{k}}$. Being an intersection of graded ideals, $P'$ is graded, and so $P' = I(H', S')$ for some admissible pair $(H', S')$. Thus, by Theorem~\ref{arbitrary ideals}(3), we have 
\[
H' = P' \cap E^{0} = (P_{l}\cap E^{0}) \cap (P_{k}\cap E^{0}) = H_{l}\cap H_{k},
\] 
and hence 
\[
(E^{0}\backslash H_{l}) \cup (E^{0}\backslash H_{k}) = E^{0}\backslash (H_{l}\cap H_{k}) = E^{0}\backslash H'.
\] 
Since $u\geq s(c_{i})$ for every $u \in (E^{0}\backslash H_{l}) \cup (E^{0}\backslash H_{k}) = E^{0} \backslash H'$, it follows that $E^{0} \backslash H'$ is downward directed. Thus to conclude that $P'$ is a (graded) prime ideal, it suffices to show that $S' = B_{H'}$, by Theorem~\ref{specificprime}. 

Consider $u \in B_{H'}$. Since $u \in E^{0}\backslash H'$, we have $u\geq s(c_{i})$, from which it follows that $u \in B_{H_l} \cap B_{H_k}$. Let $e_1, \dots, e_{n_{l}}, f_1, \dots, f_{n_{k}}, g_1, \dots, g_{n'} \in E^1$ be all the edges having source $u$ but range in $E^{0} \setminus H'$, where each $r(e_i) \in H_{l} \setminus H_{k}$, $r(f_i) \in H_{k} \setminus H_{l}$, and $r(g_i) \notin H_{l} \cup H_{k}$. Then for each $i$ we have $e_ie^*_i \in P_{l}$ and $f_if^*_i \in P_{k}$, and so 
\[
u^{H'} = u - \sum_{i=1}^{n_{l}}e_ie^*_i - \sum_{i=1}^{n_{k}}f_if^*_i - \sum_{i=1}^{n'}g_ig^*_i = u^{H_{l}} - \sum_{i=1}^{n_{l}}e_ie^*_i = u^{H_{k}} - \sum_{i=1}^{n_{k}}f_if^*_i \in P_{l}\cap P_{k} = P'.
\]
Therefore, $u^{H'} \in P'$ for all $u \in B_{H'}$, and so $S' = B_{H'}$, as desired.

For the remainder of the proof we shall assume that $\bigcap_{i=1}^{m} P_{i}$ is an irredundant intersection.

(4) Suppose that $s(c_{i}) \notin P_{l}$ and $s(c_{i}) \notin P_{k}$, for some $i \in Y$ and distinct $l,k \in \{1, \dots, m\}$. Then, by (3), $P'=P_{l}\cap P_{k}$ is a graded prime ideal. Hence, by Lemma~\ref{Product=Intersection}(2), $P' = P_{l}P_{k}$, and so either $P_{l} \subseteq P'$ or $P_{k} \subseteq P'$, which means that either $P_{l} \subseteq P_{k}$ or $P_{k} \subseteq P_{l}$. This contradicts the hypothesis that $\bigcap_{i=1}^{m} P_{i}$ is irredundant, and hence $s(c_{i}) \notin P_{j}$ for at most one $j\in \{1, \dots, m\}$, from which (4) follows, by (1).

(5) For each $i \in \{1, \dots, n\}$ let $Q_{i} = I(H_{i},B_{H_{i}}) + \langle f_{i}(c_{i})\rangle$, and let $J = \bigcap_{j=n+1}^{m} P_{j}$. (If $n=m$, then we take $J=L$.) Then, by (4), we have $s(c_i) \in J$ for all $i \in \{1, \dots, n\}$, and hence $I \subseteq J$. We now proceed by induction on $n$. 

If $n=1$, then by Lemma~\ref{Product=Intersection}(1) and the fact that ideal multiplication distributes over ideal addition in any ring,
\[
Q_{1}P_{2}\cdots P_{m} = Q_{1} J = (P_{1} + \langle f_{1}(c_{1})\rangle ) J = P_{1}J + \langle f_{1}(c_{1})\rangle J = \bigcap_{i=1}^{m} P_{i} + \langle f_{1}(c_{1})\rangle J.
\]
Since $I \subseteq J$, by Lemma~\ref{Product=Intersection}(1), we see that $\langle f_{1}(c_{1})\rangle J = \langle f_{1}(c_{1})\rangle$, and hence
\[
Q_{1}P_{2}\cdots P_{m} = I(H,S)+ \langle f_{1}(c_{1})\rangle = I.
\]

So let us suppose that $n>1$, and the statement holds for $n-1$. That is, 
\[
Q_{1}\cdots Q_{n-1}P_{n+1}\cdots P_{m} = J \cap \bigcap_{i=1}^{n-1} P_{i} + \sum_{i=1}^{n-1} \langle f_{i}(c_{i})\rangle.
\]
Then, using Theorem~\ref{arbitrary ideals}(4) and Lemma~\ref{Product=Intersection}(1), we have
\begin{align*}
Q_{1}\cdots Q_{n}P_{n+1}\cdots P_{m} & = Q_{1}\cdots Q_{n-1}(P_{n} + \langle f_{n}(c_{n})\rangle ) P_{n+1}\cdots P_{m}\\
& = Q_{1}\cdots Q_{n-1}P_{n}P_{n+1}\cdots P_{m} + Q_{1}\cdots Q_{n-1}\langle f_{n}(c_{n})\rangle J\\
&=\bigg(J \cap \bigcap_{i=1}^{n-1} P_{i} + \sum_{i=1}^{n-1} \langle f_{i}(c_{i})\rangle\bigg) P_{n} + Q_{1}\cdots Q_{n-1}\langle f_{n}(c_{n})\rangle\\
& = \bigcap_{i=1}^{m} P_{i} + \sum_{i=1}^{n-1} \langle f_{i}(c_{i})\rangle P_{n} + \bigg(\prod_{i=1}^{n-1} (P_{i} + \langle f_{i}(c_{i})\rangle) \bigg) \langle f_{n}(c_{n})\rangle.
\end{align*}
Since $s(c_{i}) \in P_{j}$ for all $i\neq j$, again using Lemma~\ref{Product=Intersection}(1), we see that 
\begin{align*}
Q_{1}\cdots Q_{n}P_{n+1}\cdots P_{m} & = I(H,S) + \sum_{i=1}^{n-1} \langle f_{i}(c_{i})\rangle + \bigg(\prod_{i=1}^{n-1} P_{i}\bigg)\langle f_{n}(c_{n})\rangle + \bigg(\prod_{i=1}^{n-1}\langle f_{i}(c_{i})\rangle \bigg)\langle f_{n}(c_{n})\rangle \\
& = I(H,S) + \sum_{i=1}^{n} \langle f_{i}(c_{i})\rangle + \bigg( \prod_{i=1}^{n-1}\langle f_{i}(c_{i})\rangle \bigg) \langle f_{n}(c_{n})\rangle
\end{align*}
Finally, $\langle f_{i}(c_{i})\rangle\langle f_{j}(c_{j})\rangle \subseteq I(H,S)$ for $i \neq j$, by Lemma~\ref{Product}(2) and Theorem~\ref{arbitrary ideals}(2). Thus
\[
Q_{1}\cdots Q_{n}P_{n+1}\cdots P_{m} = I(H,S) + \sum_{i=1}^{n} \langle f_{i}(c_{i} \rangle = I,
\]
as desired.
\end{proof}

\begin{lemma} \label{prime-gr-part}
Let $L=L_K(E)$ be a Leavitt path algebra, and let $I$ be a non-graded ideal of $L$ such that $\, \mathrm{gr}(I)$ is a prime ideal. Then there exist non-graded prime ideals $P_{1}, \dots, P_{m}$ and positive integers $m, r_1, \dots, r_m$ such that 
\[
I=P_{1}^{r_{1}}\cdots P_{m}^{r_{m}} = P_{1}^{r_{1}}\cap \cdots \cap P_{m}^{r_{m}},
\] 
and $\, \mathrm{gr}(P_{i}) = \mathrm{gr}(I)$ for each $i$.
\end{lemma}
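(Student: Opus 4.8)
The plan is to first determine the precise shape of $I$ from the structure theorem and the classification of prime ideals, and then manufacture the $P_i$ from the prime factorization of a single polynomial. Write $\mathrm{gr}(I)=I(H,S)$ with $H=I\cap E^0$. Since $\mathrm{gr}(I)$ is a \emph{graded} prime ideal, Theorem~\ref{specificprime} leaves only two options: $S=B_H$ with $E^0\setminus H$ downward directed, or $S=B_H\setminus\{u\}$ with $E^0\setminus H=M(u)$ for some $u\in B_H$. The first order of business, and the main obstacle, is to rule out the second possibility when $I$ is non-graded.

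To do this, suppose $\mathrm{gr}(I)=I(H,B_H\setminus\{u\})$ with $E^0\setminus H=M(u)$. As $I$ is non-graded, Theorem~\ref{arbitrary ideals}(1) gives $I=I(H,B_H\setminus\{u\})+\sum_{i\in Y}\langle f_i(c_i)\rangle$ with $Y\neq\emptyset$, each $c_i$ a cycle without exits in $F:=E\setminus(H,B_H\setminus\{u\})$. Fix $i\in Y$. Since $E^0\setminus H=M(u)$ and $H$ is hereditary, one checks that $s(c_i)\geq u$ within $F$, and because $c_i$ has no exits this forces $u$ to lie on $c_i$ and to emit exactly one edge $e_0$ in $F$; as $u'$ is a sink in $F$, $e_0$ is an honest edge of $E$ with $r(e_0)\notin H$, and then $e_0$ is the unique edge of $E$ with source $u$ and range outside $H$, so $u^{H}=u-e_0e_0^*$. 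Now $u$ is a regular vertex of $F$ with $s_F^{-1}(u)=\{e_0\}$, so relation (CK2) in $L_K(F)\cong L/I(H,B_H\setminus\{u\})$ yields $u=e_0e_0^*$ there, i.e.\ $u^{H}\in I(H,B_H\setminus\{u\})=\mathrm{gr}(I)\subseteq I$. This contradicts $u^{H}\notin I$, which holds since $u\notin S$. Hence $\mathrm{gr}(I)=I(H,B_H)$ with $E^0\setminus H$ downward directed.

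Next, pass to $\bar L:=L/I(H,B_H)\cong L_K(E\setminus(H,B_H))$, writing $\bar{\phantom{x}}$ for images. The ideal $\bar I$ is nonzero (since $I\supsetneq\mathrm{gr}(I)$) and contains no vertex of $E\setminus(H,B_H)$ (a vertex $w\in\bar I$ would lift to $w\in I\cap E^0=H$, impossible). Since $(E\setminus(H,B_H))^0=E^0\setminus H$ is downward directed, Lemma~\ref{cycle-ideal-lemma}(2) gives $\bar I=\langle f(c)\rangle\subseteq\langle\{c^0\}\rangle$ for a cycle $c$ without exits in $E\setminus(H,B_H)$ and some $f(x)\in K[x]$ with nonzero constant term; consequently $I=I(H,B_H)+\langle f(c)\rangle$. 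Factor $f=a\,p_1^{r_1}\cdots p_m^{r_m}$ in $K[x]$ with $a\in K^\times$, the $p_i$ distinct monic irreducibles, and $r_i\geq 1$; since $x\nmid f$, each $p_i$ has nonzero constant term, and $m\geq 1$ because $f$ cannot be a unit (else $\bar I$ would contain the vertex $s(c)$). Set $P_i:=I(H,B_H)+\langle p_i(c)\rangle$ for $1\leq i\leq m$.

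It remains to verify the three requirements. First, $E^0\setminus H$ being downward directed and $c$ having no exits force $E^0\setminus H=M(s(c))$, and (via hereditariness of $H$) $c$ is a cycle without (K) in $E$; hence each $P_i$ is prime by Theorem~\ref{specificprime}(3). Since $\langle p_i(c)\rangle$ contains no vertex (in $\bar L$ it corresponds, via Lemma~\ref{cycle-ideal-lemma}(1) and Remark~\ref{cycle-ideal-remark}, to a proper ideal of a matrix ring over $K[x,x^{-1}]$, which has no nonzero idempotents of trace $1$), while $I(H,B_H)\subseteq P_i$, we get $P_i\cap E^0=H$ and $\mathrm{gr}(P_i)=I(H,B_H)=\mathrm{gr}(I)$, and $P_i$ is non-graded because $\overline{P_i}=\langle\overline{p_i(c)}\rangle\neq 0$; the $P_i$ are pairwise distinct since the $p_i$ are non-associate. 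Second, working in $\bar L$ and repeatedly applying Lemma~\ref{Product}(1),
\[
\overline{P_1^{r_1}\cdots P_m^{r_m}}=\langle\overline{p_1(c)}\rangle^{r_1}\cdots\langle\overline{p_m(c)}\rangle^{r_m}=\big\langle\,\overline{(p_1^{r_1}\cdots p_m^{r_m})(c)}\,\big\rangle=\langle\overline{f(c)}\rangle=\bar I;
\]
since both $P_1^{r_1}\cdots P_m^{r_m}$ and $I$ contain $I(H,B_H)$ (the former because $I(H,B_H)$ is graded and idempotent, hence contained in any product of ideals containing it, by Lemma~\ref{Product=Intersection}(1)) and have the same image in $\bar L$, they coincide. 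Third, $P_1^{r_1}\cap\cdots\cap P_m^{r_m}=P_1^{r_1}\cdots P_m^{r_m}$ by Proposition~\ref{prime-power-intersect}, the $P_i$ being distinct primes. Thus $I=P_1^{r_1}\cdots P_m^{r_m}=P_1^{r_1}\cap\cdots\cap P_m^{r_m}$ with $\mathrm{gr}(P_i)=\mathrm{gr}(I)$ for all $i$, and the crux of the whole argument is the exclusion of the ``type (2)'' graded prime in the first step.
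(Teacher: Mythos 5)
Your proof is correct, and its second half --- factoring $f(x)$ into irreducibles, setting $P_i = I(H,B_H)+\langle p_i(c)\rangle$, checking primeness via Theorem~\ref{specificprime}(3), and then invoking Lemma~\ref{Product}(1) and Proposition~\ref{prime-power-intersect} --- is exactly the paper's argument. Where you genuinely diverge is in the first half: the paper obtains the normal form $I=I(H,B_H)+\langle f(c)\rangle$ in one line by citing Lemma~\ref{unique-cycle-lemma}(2,5) with $m=1$ (the graded part being a single prime, parts (1), (2), and (5) of that lemma immediately give $S=B_H$, $|Y|=1$, and the displayed form), whereas you re-derive it from first principles: you exclude a type-(2) graded prime $I(H,B_H\setminus\{u\})$ by showing that a cycle without exits in the quotient graph would have to pass through $u$, forcing $u$ to be a regular vertex there emitting a single edge $e_0$, so that (CK2) puts $u^{H}=u-e_0e_0^*$ into $\mathrm{gr}(I)\subseteq I$, contradicting $u\notin S$; you then extract the single cycle and single polynomial from Lemma~\ref{cycle-ideal-lemma}(2). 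Both routes are valid; yours is self-contained and avoids the machinery of Lemma~\ref{unique-cycle-lemma} (which the paper needs elsewhere anyway), at the cost of two extra paragraphs. Two minor remarks: your exclusion of the type-(2) prime admits a shortcut --- the edge $e$ of the cycle with $r(e)=u$ spawns an edge $e'$ into the sink $u'$ with $s(e')=s(e)$, which is already an exit for the cycle --- and your parenthetical reason that $\langle p_i(c)\rangle$ contains no vertex (``no nonzero idempotents of trace $1$'') is not quite the right one; the clean argument is that an idempotent lying in $\M_Y(\langle p_i(x)\rangle)$ lies in $\M_Y(\langle p_i^n(x)\rangle)$ for every $n$, and $\bigcap_n \langle p_i^n(x)\rangle = \{0\}$ in $K[x,x^{-1}]$. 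Neither point affects the validity of the proof.
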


\begin{proof}
By Lemma~\ref{unique-cycle-lemma}(2,5), we have $I = I(H,B_{H}) + \langle f(c)\rangle$, where $\mathrm{gr}(I) = I(H,B_{H})$ is a prime ideal, $c$ is a cycle without exits in $E\backslash(H,B_{H})$, and $f(x)\in K[x]$ is a polynomial with a nonzero constant term.

Write $f(x)=p_{1}^{r_{1}}(x)\cdots p_{m}^{r_{m}}(x)$ for some non-conjugate irreducible polynomials $p_i(x) \in K[x]$ and positive integers $m, r_1, \dots, r_m$. Then, using Lemma \ref{Product}(1) and Theorem \ref{arbitrary ideals}(2), we have
\[
I=(I(H,B_{H})+\langle p_1(c)\rangle)^{r_{1}} \cdots (I(H,B_{H})+\langle p_m(c)\rangle)^{r_{m}}.
\]
Writing $P_{i}=I(H,B_{H})+ \langle p_{i}(c)\rangle$ for each $i$, we then have $I=P_{1}^{r_{1}}\cdots P_{m}^{r_{m}}$, where each $P_i$ is prime, by Theorem~\ref{specificprime}. By construction, the $P_i$ are distinct, and so $I = P_{1}^{r_{1}}\cap \cdots \cap P_{m}^{r_{m}}$, by Proposition~\ref{prime-power-intersect}.
\end{proof}

\section{Completely Irreducible Ideals} \label{comp-irred-section}

Recall that a proper ideal $I$ of a ring $R$ is \textit{completely irreducible} if $I$ is not the intersection of any set of ideals properly containing $I$. In this section we characterize the completely irreducible ideals of an arbitrary Leavitt path algebra. These ideals turn out to be prime-power ideals of a special sort, and so the results from the previous two sections apply to them.

It is well-known and easy to prove that an ideal $I$ of a ring $R$ is completely irreducible if and only if there exists $r \in R$ such that $I$ is an ideal maximal with respect to $r \notin I$. An straightforward consequence of this statement is the following observation, which will be useful throughout the rest of the paper.

\begin{proposition} \label{Everyone is int comp.irr.} 
Every proper ideal in a ring is the intersection of the completely irreducible ideals that contain it.
\end{proposition}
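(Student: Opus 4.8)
The statement to prove is: every proper ideal $I$ of a ring $R$ is the intersection of the completely irreducible ideals containing it. The plan is to use the characterization quoted just before the proposition, namely that an ideal $J$ of $R$ is completely irreducible if and only if there exists $r \in R$ such that $J$ is maximal among the ideals of $R$ not containing $r$. So first I would fix a proper ideal $I$ and let $\mathcal{C}$ denote the set of all completely irreducible ideals of $R$ containing $I$. One inclusion is immediate: $I \subseteq \bigcap_{J \in \mathcal{C}} J$ since every member of $\mathcal{C}$ contains $I$. (One should note in passing that $\mathcal{C}$ is nonempty when $I$ is proper — this will drop out of the argument below, since for any $r \notin I$ we will produce a completely irreducible ideal containing $I$ but not $r$, so there is no need to treat it separately.)

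\textbf{The reverse inclusion.} For the reverse inclusion, it suffices to show that if $r \in R$ with $r \notin I$, then there is some $J \in \mathcal{C}$ with $r \notin J$; this shows $\bigcap_{J \in \mathcal{C}} J \subseteq I$. Fix such an $r$, and consider the set $\mathcal{S}$ of all ideals of $R$ that contain $I$ but do not contain $r$. This set is nonempty since $I \in \mathcal{S}$, and it is partially ordered by inclusion. I would apply Zorn's Lemma: given a chain $\{A_\lambda\}$ in $\mathcal{S}$, the union $\bigcup_\lambda A_\lambda$ is again an ideal (a union of a chain of ideals), it contains $I$, and it does not contain $r$ (if it did, $r$ would lie in some single $A_\lambda$, a contradiction), so the union is an upper bound in $\mathcal{S}$. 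Hence $\mathcal{S}$ has a maximal element $J$. By construction $J$ contains $I$, does not contain $r$, and is maximal with respect to the property $r \notin J$ — note that maximality in $\mathcal{S}$ (among ideals containing $I$) coincides with maximality among all ideals not containing $r$, since any ideal not containing $r$ automatically fails to contain $r \notin I \supseteq$ nothing; more carefully, any ideal $J'$ with $J \subseteq J'$ and $r \notin J'$ automatically contains $I$, hence lies in $\mathcal{S}$, forcing $J' = J$. Therefore $J$ is maximal with respect to $r \notin J$, so by the quoted characterization $J$ is completely irreducible, and $J \in \mathcal{C}$ with $r \notin J$, as required.

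\textbf{Main obstacle.} There is essentially no serious obstacle here: the result is a standard Zorn's Lemma argument, and the only point requiring a moment's care is the observation that maximality of $J$ within $\mathcal{S} = \{$ideals containing $I$, not containing $r\}$ is the same as maximality of $J$ among all ideals of $R$ not containing $r$ — which holds because $I \not\ni r$ forces every ideal avoiding $r$ that sits above $J \supseteq I$ to still contain $I$. Given that the "well-known and easy to prove" characterization of completely irreducible ideals is being invoked as already established in the text, the proof reduces to this short extraction argument, and I would present it in just a few lines.
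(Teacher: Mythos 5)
Your proof is correct and follows exactly the route the paper intends: the paper states the proposition as a "straightforward consequence" of the characterization of completely irreducible ideals as those maximal with respect to excluding some element, and your Zorn's Lemma argument supplies precisely the standard details of that deduction, including the small but necessary observation that maximality within the family of ideals containing $I$ and avoiding $r$ coincides with maximality among all ideals avoiding $r$. Nothing is missing, and the argument works for arbitrary (possibly non-unital) rings, as required here.
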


The following concepts will help us describe the completely irreducible ideals in Leavitt path algebras.

\begin{definition}    
Let $E$ be a graph, and let $S$ be a nonempty subset of $E^0$ .
\begin{enumerate}
\item[$(1)$] We say that $S$ satisfies the \textit{countable separation property} (\textit{CSP} for short) if there is a countable subset $T$ of $S$, such that for every $u\in S$ there is a $v\in T$ satisfying $u\geq v$.

\item[$(2)$] We say that $S$ satisfies the \textit{strong CSP} if $S$ satisfies the CSP with respect to some countable subset $T$, such that $T$ is contained in every nonempty hereditary saturated subset of $S$.
\end{enumerate}
\end{definition}

We next describe the Leavitt path algebras for which the zero ideal is completely irreducible.

\begin{proposition} \label{Zero Comp Irreducible} 
The following are equivalent for any Leavitt path algebra $L=L_K(E)$.
\begin{enumerate}
\item[$(1)$] The zero ideal is completely irreducible.

\item[$(2)$] The graph $E$ satisfies condition (L), and $E^{0}$ is downward directed and satisfies the strong CSP.
\end{enumerate}
\end{proposition}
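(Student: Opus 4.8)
The plan is to prove the two implications separately, translating ``the zero ideal is completely irreducible'' into the statement that there exists $r \in L$ such that $\{0\}$ is an ideal maximal with respect to not containing $r$, via the well-known characterization recalled just before Proposition~\ref{Everyone is int comp.irr.}. For the direction $(1)\Rightarrow(2)$, I would first observe that a completely irreducible ideal is in particular irreducible, hence by Proposition~\ref{quasi-primaryequivalent} the zero ideal is a prime-power ideal; since $\{0\}$ being prime-power and graded forces it to be prime, and a proper power of a non-graded prime is nonzero, we get that $\{0\}$ is a (graded) prime ideal with $(E\setminus(H,S))^0 = E^0$ downward directed. Downward directedness of $E^0$ is exactly condition (MT3) for $M = E^0$, i.e.\ $H = \emptyset$ is hereditary saturated; and $\{0\}$ prime together with $L$ having the zero ideal prime gives, via Theorem~\ref{specificprime}(1) (with $H = \emptyset$, so $B_H = \emptyset$), that $E^0$ is downward directed. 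Condition (L) follows because if some cycle $c$ had no exit, then $\langle f(c)\rangle$ for an irreducible $f$ with nonzero constant term would be a nonzero ideal, and one shows it is contained in every nonzero ideal (using downward directedness and Lemma~\ref{cycle-ideal-lemma}), contradicting the existence of a maximal ideal missing a fixed element — actually more directly, condition (L) is equivalent to $\{0\}$ being prime forcing no closed-path obstruction; I would instead argue condition (L) from the need for $\{0\}$ to be an \emph{intersection-irreducible-in-the-strong-sense} ideal, via Lemma~\ref{cycle-ideal-lemma}(1), which shows a cycle without exits produces an ideal $\cong \mathbb{M}_Y(K[x,x^{-1}])$, whose ideal lattice is not even Artinian, so $\{0\}$ could not be maximal avoiding any single element inside it. The strong CSP is the crux: if $\{0\}$ is maximal with respect to $r \notin \{0\}$, write $r = \sum a_i \mu_i \nu_i^*$ and let $T$ be the (countable) set of vertices appearing; one shows every nonzero ideal contains $r$, so every nonzero ideal meets $\langle T\rangle$, and hereditariness plus downward directedness converts this into: every nonempty hereditary saturated subset of $E^0$ contains some vertex $v$ with some $t \in T$ satisfying... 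I would need to carefully set up $T$ as the range-closure of the vertices of $r$ under the finitely many edges involved, and verify both that $E^0 \geq T$ (CSP) and that $T$ lies inside every nonempty hereditary saturated set.

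For the converse $(2)\Rightarrow(1)$, assuming condition (L), downward directedness, and the strong CSP witnessed by a countable $T = \{v_1, v_2, \dots\} \subseteq E^0$, I would construct an explicit element $r$ such that $\{0\}$ is maximal with respect to $r \notin \langle \cdot \rangle$. Since condition (L) holds, $E$ has no cycles without exits, so by Theorem~\ref{arbitrary ideals}(1) every ideal is graded, hence of the form $I(H,B_H)$ (breaking vertices aside), and every nonzero graded ideal contains a vertex by Theorem~\ref{arbitrary ideals}(3); downward directedness then guarantees that any nonzero ideal $I$ has $I \cap E^0$ a nonempty hereditary saturated set. By the strong CSP, $I \cap E^0 \supseteq T$, in particular $v_1 \in I$. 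The natural candidate is then $r = v_1$ itself if $T$ can be taken to be a single vertex — but in general $T$ is only countable, so instead I would want to leverage that $T$ lies in \emph{every} nonzero such set; the subtlety is that a nonzero ideal need not contain \emph{all} of $T$ as a set of vertices unless we know $I\cap E^0$ is hereditary saturated and nonempty, which we do. So any fixed $v_j \in T$ works: take $r = v_1$. Then $\{0\}$ is the intersection of ideals not containing $v_1$? No — I need $\{0\}$ \emph{maximal} among ideals missing $v_1$: if $I \neq 0$ then $I \cap E^0 \neq \emptyset$ is hereditary saturated, so $T \subseteq I$, so $v_1 \in I$. Hence $\{0\}$ is maximal w.r.t.\ $v_1 \notin I$, so $\{0\}$ is completely irreducible. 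The only gap is ensuring $I$ nonzero $\Rightarrow$ $I \cap E^0 \neq \emptyset$, which uses condition (L) via Theorem~\ref{arbitrary ideals}(1): with no cycles without exits the sum $\sum \langle f_i(c_i)\rangle$ is empty, so $I = I(H,S)$ is graded and nonzero, whence $H = I \cap E^0 \neq \emptyset$ by Theorem~\ref{arbitrary ideals}(3) (a nonzero graded ideal $I(H,S)$ with $H = \emptyset$ would be $\langle v^\emptyset : v \in S\rangle$ with $v^\emptyset = v$, forcing $H \neq \emptyset$ anyway), and $H$ is automatically saturated and hereditary, nonempty, so contains $T$.

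The main obstacle I anticipate is the bookkeeping around the strong CSP in the $(1)\Rightarrow(2)$ direction: extracting from a single element $r$ with $r \notin \{0\}$ maximal a \emph{countable} set $T$ that simultaneously satisfies the downward reachability condition ($\forall u \in E^0, \exists t \in T, u \geq t$) \emph{and} is contained in every nonempty hereditary saturated subset of $E^0$. The first is clean — take $T$ to be the vertices appearing in a fixed expression of $r$, use that $\langle T\rangle \ni r$ and every nonzero ideal contains $r$ hence meets $\langle T\rangle$ hence (being hereditary below) reaches some vertex, combined with downward directedness. The second requires showing that if $H$ is nonempty hereditary saturated then $I(H,B_H) \neq 0$, hence contains $r$, hence contains all vertices of $r$, i.e.\ $T \subseteq H$; this is where one must be careful that ``$r \in I(H,B_H)$'' forces each vertex-component of $r$ into $H$ — which follows from $I(H,B_H)$ being graded together with the description in Theorem~\ref{arbitrary ideals} and the fact (Theorem~\ref{arbitrary ideals}(3)) that $I(H,B_H) \cap E^0 = H$. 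I would also need condition (L) throughout to guarantee $\{0\}$ is graded and to rule out matrix-over-Laurent-polynomial corners that would destroy complete irreducibility; packaging this cleanly is the second source of friction.
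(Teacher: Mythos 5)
Your $(2)\Rightarrow(1)$ direction is essentially the paper's argument (every nonzero ideal contains a vertex by condition (L), hence contains the countable witness $T$ by the strong CSP, so the intersection of all nonzero ideals is nonzero), and your derivation of downward directedness in $(1)\Rightarrow(2)$ via irreducibility and Proposition~\ref{quasi-primaryequivalent} is fine. The genuine gap is exactly where you flag ``the crux'': your plan for extracting the CSP and strong CSP from a single element $r$ with $\{0\}$ maximal subject to $r\notin I$ does not work. The set of vertices appearing in a fixed expression $r=\sum a_i\mu_i\nu_i^*$ is \emph{finite}, and $E^0$ need not satisfy the CSP with respect to any finite set even when $\{0\}$ is completely irreducible: take $E$ to be the chain $v_1\rightarrow v_2\rightarrow v_3\rightarrow\cdots$. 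There the only nonempty hereditary saturated subset is $E^0$, so $L_K(E)$ is simple and $r=v_1$ witnesses complete irreducibility of $\{0\}$; but $v_2\not\geq v_1$, so $\{v_1\}$ fails the CSP (and since downward directedness would collapse any finite CSP witness to a single vertex, no finite set can work here -- the correct witness is all of $E^0$). Your fallback, taking $T$ to be the ``range-closure'' of the vertices of $r$, is not obviously countable when $E$ has infinite emitters, and you do not develop it. Separately, your argument that $T$ lies in every nonempty hereditary saturated $H$ rests on the claim that $r\in I(H,B_H)$ forces the vertices appearing in an expression of $r$ into $H$; this is false -- e.g.\ $ee^*\in\langle r(e)\rangle$ while $s(e)$ need not lie in the hereditary saturated closure of $\{r(e)\}$. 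Theorem~\ref{arbitrary ideals}(3) only identifies which vertices are \emph{elements} of the ideal, not which vertices may appear in expressions of its elements.

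The missing idea is the paper's reduction to primitivity: since $\{0\}$ is the Jacobson radical of $L$ and hence the intersection of all primitive ideals, complete irreducibility of $\{0\}$ forces $L$ to be a primitive ring, and the characterization of primitive Leavitt path algebras \cite[Theorem 5.7]{ABR} is what delivers condition (L), downward directedness, and the CSP with respect to a countable set $S$ all at once. The strong CSP then requires a further argument: one shows that the intersection $H$ of all nonempty hereditary saturated subsets of $E^0$ is itself nonempty (using complete irreducibility again, via Theorem~\ref{arbitrary ideals}(3) applied to $\bigcap_i\langle H_i\rangle$), and then verifies the CSP with respect to $S\cap H$ using downward directedness and heredity. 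Your sketch of condition (L) via Lemma~\ref{cycle-ideal-lemma}(1) (a cycle without exits yields an ideal isomorphic to $\M_Y(K[x,x^{-1}])$, in which the nonzero ideals intersect to zero) can be made to work as an alternative to citing primitivity for that one clause, but it does not help with the CSP, which is the part that cannot be recovered from a single witness element in the way you propose.
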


\begin{proof}
(1) $\Rightarrow$ (2) Suppose that $\{0\}$ is completely irreducible. According to~\cite[Proposition 2.3.2]{AAS}, $\{0\}$ is the Jacobson radical of $L$, and hence $\{0\}$ is also the intersection of all the primitive ideals of $L$. (It is a standard fact that in any ring, the Jacobson radical is the intersection of all the primitive ideals. This is typically proved for unital rings, but can be easily extended to rings with local units.) We therefore conclude that $\{0\}$ must be primitive, and hence $L$ is a primitive ring. By \cite[Theorem 5.7]{ABR}, $L$ being primitive implies that $E$ satisfies condition (L), $E^{0}$ is downward directed, and $E^{0}$ satisfies the CSP, with respect to a countable nonempty subset $S$ of $E^{0}$. Let $\{H_{i} \mid i\in Y\}$ be the collection of all nonempty hereditary saturated subsets of $E^{0}$, and let $H = \bigcap_{i\in Y} H_{i}$. We shall show that $E^{0}$ satisfies the strong CSP with respect to $S\cap H$.

First, we claim that $H \neq \emptyset$. If on the contrary, $H = \emptyset$, then, by Theorem~\ref{arbitrary ideals}(3),
\[
E^{0} \cap \bigcap_{i\in Y} \langle H_{i}\rangle = \bigcap_{i\in Y} H_{i} = H = \emptyset.
\]
As the intersection of graded ideals, $\bigcap_{i\in Y} \langle H_{i}\rangle$ is itself graded, and hence $E^{0} \cap \bigcap_{i\in Y} \langle H_{i}\rangle = \emptyset$ implies that $\bigcap_{i\in Y} \langle H_{i}\rangle = \{0\}$, again, by Theorem \ref{arbitrary ideals}(3). But this contradicts $\{0\}$ being completely irreducible, and hence $H \neq \emptyset$.

To conclude the proof, let us take an arbitrary vertex $v \in E^{0}$ and show that $v\geq w$ for some $w \in S\cap H$. Letting $u\in H$ be any vertex, $E^{0}$ satisfying the CSP with respect to $S$ implies that there exist $v',u'\in S$ such that $v\geq v^{\prime}$ and $u\geq u^{\prime}$. Since $E^{0}$ is downward directed, there exists $w' \in E^{0}$ such that $u'\geq w'$ and $v' \geq w'$. Then, invoking the CSP again, $w'\geq w$ for some $w\in S$. Since $H$ is hereditary, $u\geq w$ implies that $w\in S\cap H$, as desired.

(2) $\Rightarrow$ (1) Suppose that $E$ satisfies condition (L), $E^{0}$ is downward directed, and $E^{0}$ satisfies the strong CSP with respect to some nonempty countable $S \subseteq E^{0}$. According to \cite[Proposition 2.2.14]{AAS}, $E$ satisfying condition (L) implies that every nonzero ideal $I$ of $L$ contains a vertex, and hence $I\cap E^{0} \neq \emptyset$. By \cite[Lemma 2.4.3]{AAS}, the set $I\cap E^{0}$ is hereditary and saturated, and hence, by hypothesis, $S\subseteq I\cap E^{0}$, for every nonzero ideal $I$. Thus the intersection of all the nonzero ideals of $L$ contains $S$, and hence is nonzero. This shows that $\{0\}$ is completely irreducible.
\end{proof}

\begin{example}
Let $E$ be the following graph.
\[
\xymatrix{\ar@{.}[r] & {\bullet}^{v_3} \ar [r]  & {\bullet}^{v_2} \ar [r] & {\bullet}^{v_1}}
\]
Then clearly $E$ satisfies condition (L), $E^{0}$ is downward directed, and $E^{0}$ satisfies the strong CSP with respect to $\{v_1\}$. Thus, the ideal $\{0\}$ is completely irreducible in $L_{K}(E)$, by the previous proposition.
\hfill $\Box$
\end{example}

We now utilize Proposition~\ref{Zero Comp Irreducible} to prove the main result of this section, which describes all the completely irreducible ideals in an arbitrary Leavitt path algebra. (See Proposition~\ref{quasi-primaryequivalent} for a description of the irreducible ideals.)

\begin{theorem} \label{comp.Irreducible} 
Let $L=L_K(E)$ be a Leavitt path algebra, and let $I$ be a proper ideal of $L$. Then $I$ is completely irreducible if and only if exactly one of the following conditions holds.
\begin{enumerate}
\item[$(1)$] $I=I(H,S)$ is a graded ideal, $E\backslash(H,S)$ satisfies condition (L), and $(E\backslash(H,S))^{0}$ is downward directed and satisfies the strong CSP. (In this case $I$ is prime.)

\item[$(2)$] $I=P^{n}$ for some non-graded prime ideal $P$ and positive integer $n$.
\end{enumerate}
\end{theorem}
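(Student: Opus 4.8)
The plan is to extract the shape of $I$ from the irreducibility classification, handle the graded case by passing to a quotient and invoking Proposition~\ref{Zero Comp Irreducible}, and handle the non-graded case by a direct ``maximal with respect to $r\notin I$'' argument inside the matrix-ring picture around a cycle without exits.

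First I would observe that a completely irreducible ideal is in particular irreducible, so by Proposition~\ref{quasi-primaryequivalent} a completely irreducible $I$ is either a graded prime ideal $I(H,S)$ or a power $P^n$ of a non-graded prime ideal $P$. I would also record that these two possibilities are mutually exclusive: by Proposition~\ref{quasi-primaryequivalent}(3.2) a power of a non-graded prime has the form $I(H,B_H)+\langle p^n(c)\rangle$ with $\mathrm{gr}$-part $I(H,B_H)\subsetneq P^n$ (since $p^n(c)\notin I(H,B_H)$, as $p$ has nonzero constant term), hence is non-graded and cannot be of the form in~(1), while an ideal satisfying~(1) is graded. This reduces the theorem to showing: (a) a graded ideal $I(H,S)$ is completely irreducible iff the graph conditions in~(1) hold (and then it is prime); and (b) $P^n$ is completely irreducible whenever $P$ is a non-graded prime and $n\geq 1$ --- note the forward implication in the non-graded case is already supplied by the first observation together with Proposition~\ref{quasi-primaryequivalent}(3.2).

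For~(a) I would use the elementary lattice fact that, for a proper ideal $I$ of a ring $R$, $I$ is completely irreducible in $R$ if and only if $\{0\}$ is completely irreducible in $R/I$ (the ideals of $R/I$ are exactly the images of the ideals of $R$ containing $I$, and this correspondence preserves arbitrary intersections). Applying this with $I=I(H,S)$ and the graded isomorphism $L/I(H,S)\cong L_K(E\backslash(H,S))$ of Section~\ref{LPAidealSect}, complete irreducibility of $I$ becomes complete irreducibility of $\{0\}$ in $L_K(E\backslash(H,S))$, which by Proposition~\ref{Zero Comp Irreducible} is exactly condition~(1). The parenthetical claim that $I$ is then prime follows as in the proof of Proposition~\ref{Zero Comp Irreducible}: complete irreducibility of $\{0\}$ forces it to be primitive, hence prime, hence $I$ is prime.

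For~(b), given a non-graded prime $P$ I would write $P^n = I(H,B_H)+\langle p^n(c)\rangle$ via Proposition~\ref{quasi-primaryequivalent}(3.2), where $c$ is a cycle without exits in $\bar E := E\backslash(H,B_H)$, $(\bar E)^0$ is downward directed, and $p(x)\in K[x]$ is irreducible with nonzero constant term. By the quotient reduction of paragraph three it suffices to show $\langle p^n(c)\rangle$ is completely irreducible in $\bar L := L_K(\bar E)$, and I would do this by showing it is maximal among ideals of $\bar L$ not containing $r := p^{n-1}(c)$ (with $p^0(c):=s(c)$), which by the standard criterion for complete irreducibility finishes the proof. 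Working inside $\langle\{c^0\}\rangle\cong \M_Y(K[x,x^{-1}])$ (Lemma~\ref{cycle-ideal-lemma}(1), Remark~\ref{cycle-ideal-remark}, Proposition~\ref{Morita equivalence}) --- and noting that $s(c)\in\langle\{c^0\}\rangle$, so ideals generated in $\bar L$ by elements of $s(c)\bar L\,s(c)$ agree with those generated inside $\langle\{c^0\}\rangle$ --- one sees $\langle p^n(c)\rangle$ corresponds to $\M_Y(\langle p^n(x)\rangle)$, so $r\notin\langle p^n(c)\rangle$ because $p^{n-1}(x)\notin\langle p^n(x)\rangle$. If $J\supsetneq\langle p^n(c)\rangle$ is an ideal of $\bar L$, then by Lemma~\ref{cycle-ideal-lemma}(2,3) either $J$ contains a vertex, whence $\langle\{c^0\}\rangle\subseteq J$ and so $r\in J$; or $J=\langle g(c)\rangle$ with $g(x)\in K[x]$ having nonzero constant term, in which case $\langle p^n(x)\rangle\subsetneq\langle g(x)\rangle$ in $K[x,x^{-1}]$, so $g$ is a proper divisor of $p^n$, hence $g\mid p^{n-1}$ since $p$ is irreducible, hence $r=p^{n-1}(c)\in\langle g(c)\rangle = J$. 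Thus every ideal strictly above $\langle p^n(c)\rangle$ contains $r$, as required.

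The main obstacle is the bookkeeping in paragraph four: one must be careful that the ideals $\langle p^n(c)\rangle$ and $\langle g(c)\rangle$ computed in $\bar L$ coincide with their computations inside the subring $\langle\{c^0\}\rangle$, and that the Morita correspondence of Proposition~\ref{Morita equivalence} faithfully translates ``proper divisor of $p^n(x)$'' into ``properly larger principal ideal'' --- everything then rests on $K[x,x^{-1}]$ being a PID in which $\langle p^n\rangle$ is completely irreducible for $p$ irreducible. By contrast the graded case is routine once the $R \mapsto R/I$ reduction and Proposition~\ref{Zero Comp Irreducible} are in hand, and the forward direction in the non-graded case is immediate from Proposition~\ref{quasi-primaryequivalent}.
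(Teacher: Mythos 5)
Your proposal is correct and follows essentially the same route as the paper: reduce via Proposition~\ref{quasi-primaryequivalent} to the graded-prime and non-graded prime-power cases, handle the graded case by quotienting and invoking Proposition~\ref{Zero Comp Irreducible}, and handle $P^n$ by passing to $L/I(H,B_H)$ and analyzing the ideals above $\langle p^n(c)\rangle$ inside $\langle\{c^0\}\rangle\cong \M_Y(K[x,x^{-1}])$ via Lemma~\ref{cycle-ideal-lemma} and Proposition~\ref{Morita equivalence}. The only cosmetic difference is that you phrase the last step as maximality with respect to $p^{n-1}(c)\notin I$, while the paper computes the intersection of all ideals properly containing $\langle p^n(c)\rangle$ to be $\langle p^{n-1}(c)\rangle$ --- the same argument.
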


\begin{proof}
Suppose that $I$ is completely irreducible. Then, in particular, $I$ is irreducible, and so, by Proposition~\ref{quasi-primaryequivalent}, $I=P^{n}$ for some prime ideal $P$ and positive integer $n$. If $P$ is non-graded, then condition (2) holds. Let us therefore suppose that $P$ is graded. Then $I=P^{n}=P$, by Lemma~\ref{Product=Intersection}(1), and hence $P = I = I(H,S)$, for some admissible pair $(H,S)$. Since $I$ is completely irreducible, so is the zero ideal of $L_{K}(E\backslash(H,S))\cong L/I$. Hence condition (1) holds, by Proposition~\ref{Zero Comp Irreducible}. Note that, by Theorem~\ref{arbitrary ideals}(1,2) and Lemma~\ref{Product}, a power of a non-graded ideal is non-graded, and so conditions (1) and (2) are mutually exclusive.

For the converse, first suppose that condition (1) holds. Then, by Proposition \ref{Zero Comp Irreducible}, the zero ideal of $L_{K}(E\backslash (H,S))\cong L/I$ is completely irreducible, and hence so is $I$.

Now let us suppose that condition (2) holds. Then, by Proposition~\ref{quasi-primaryequivalent}, $I = P^n = I(H,B_{H})+\langle p^{n}(c)\rangle$, where $c$ is a cycle without exits in $E\backslash (H,B_{H})$, $p(x) \in K[x,x^{-1}]$ is irreducible, and
$(E\backslash (H,B_{H}))^{0}$ is downward directed. We shall show that $\bar{P}^{n}=P^{n}/I(H,B_{H})$ is completely irreducible in $\bar{L}=L/I(H,B_{H})\cong L_{K}(E\backslash(H,B_{H}))$, from which it follows that $I$ is completely irreducible in $L$.

First, we note that in the principal ideal domain $K[x,x^{-1}]$, if $\langle f(x) \rangle \supseteq \langle p^{n}(x)\rangle$ for some $f(x) \in K[x,x^{-1}]$, then $p^{n}(x)=f(x)g(x)$ for some $g(x)\in K[x,x^{-1}]$, which implies that $\langle f(x)\rangle=\langle p^{k}(x)\rangle$ for some $0 \leq k\leq n$, since $p(x)$ is irreducible. Thus, the set of proper ideals of $K[x,x^{-1}]$ containing $\langle p^{n}(x)\rangle$ is precisely $\{\langle p^{k}(x)\rangle \mid 1 \leq k \leq n\}$. Now let $M$ be the ideal of $\bar{L}$ generated by $\{c^{0}\}$. Then $\bar{P}\subseteq M$, and $M\cong \M_{Y}(K[x,x^{-1}])$ for some index set $Y$, by  Lemma~\ref{cycle-ideal-lemma}(1). So, by Proposition~\ref{Morita equivalence} and Remark \ref{cycle-ideal-remark}, the set of proper ideals of $M$ containing $\bar{P}^{n}=\langle p^{n}(c)\rangle$ is $\{\langle p^{k}(c)\rangle \mid 1 \leq k \leq n\}$. 

Next, let $J$ be an ideal of $\bar{L}$ such that $\bar{P}^{n} \subsetneq J$. Then, by Lemma~\ref{cycle-ideal-lemma}(2,3), either $M \subseteq J$ or $J =\langle h(c)\rangle \subseteq M$ for some $h(x)\in K[x]$. Thus, by the previous paragraph, either $M \subseteq J$ or $J =\langle p^{k}(c)\rangle$ for some $k \in \{1, \dots, n-1\}$. It follows that the intersection of all the ideals of $\bar{L}$ properly containing $\bar{P}^{n}$ is $\langle p^{n-1}(c)\rangle$. This shows that $\bar{P}^{n}$ is completely irreducible in $\bar{L}$, as desired.
\end{proof}

\begin{remark}\label{graded prime comp irred} 
We note that the prime ideals of type (2) in Theorem~\ref{specificprime} are always completely irreducible. More specifically, these prime ideals are of the form $I(H,B_{H}\backslash\{u\})$ for some $u \in B_{H}$, where $E^0 \setminus H = M(u)$. In this situation $(E\backslash (H,B_{H}\backslash\{u\}))^{0} = (E^0\setminus H) \cup \{u'\}$, where $u'$ is a sink. Thus the conditions in (1) of Theorem~\ref{comp.Irreducible} are clearly satisfied.
\end{remark}

\begin{remark}
Theorems~\ref{Uniqueness} and~\ref{comp.Irreducible} imply that  in an irredundant product of completely irreducible ideals, those ideals are uniquely determined, provided that they are powers of distinct prime ideals.

Likewise, Theorems~\ref{uniqueness-intersect} and~\ref{comp.Irreducible} imply that in an irredundant intersection of finitely many completely irreducible ideals, those ideals are uniquely determined. However, generally speaking, representations of an ideal in a ring $R$ as an irredundant intersection of an infinite collection of completely irreducible ideals are not unique--see, e.g., \cite[Example 3.1]{FHO}.
\end{remark}

The rest of this section is devoted to describing Leavitt path algebras where all proper ideals satisfy some condition related to being completely irreducible. We begin with algebras where all proper ideals are completely irreducible.

\begin{theorem}\label{everyidealcompirred} 
The following are equivalent for any Leavitt path algebra $L=L_K(E)$.
\begin{enumerate}
\item[$(1)$] Every proper ideal of $L$ is completely irreducible.

\item[$(2)$] Every ideal of $L$ is graded, and the ideals of $L$ are well-ordered under set inclusion.

\item[$(3)$] The graph $E$ satisfies condition (K), the admissible pairs $(H,S)$ form a chain under the partial order of admissible pairs, and $(E\backslash(H,S))^{0}$ satisfies the strong CSP for each admissible pair $(H,S)$ with $H \neq E^{0}$. 
\end{enumerate}
\end{theorem}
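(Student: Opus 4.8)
The plan is to prove the cycle of implications $(1) \Rightarrow (2) \Rightarrow (3) \Rightarrow (1)$, using the characterization of completely irreducible ideals from Theorem~\ref{comp.Irreducible} together with the correspondence between graded ideals and admissible pairs.

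For $(1) \Rightarrow (2)$: first I would show every ideal is graded. If $I$ were a non-graded ideal, then $\mathrm{gr}(I) \subsetneq I$, and in fact by Theorem~\ref{arbitrary ideals}(1,2) there is a cycle $c$ without exits in $E\backslash(H,S)$ with a polynomial $f(c)$ of minimal degree lying in $I$; writing $f = p_1^{e_1}\cdots p_k^{e_k}$ into irreducibles and using Lemma~\ref{Product}(1), one sees $I$ sits strictly between $I(H,S)$ and $I(H,S)+\langle p_1^{e_1-1}(c)p_2^{e_2}\cdots\rangle$ type ideals; more to the point, $I$ is the intersection of the ideals $I(H,S) + \langle (f/p_i)(c)\rangle$ (or, if $f = p^e$, this already fails complete irreducibility only for $e$ large — but note a non-graded prime power $P^n$ \emph{is} completely irreducible, so I must argue differently). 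The cleaner route: by Proposition~\ref{Everyone is int comp.irr.}, if \emph{every} proper ideal is completely irreducible then every proper ideal is already its own unique defining completely irreducible ideal, so the completely irreducible ideals are exactly all proper ideals; combined with Theorem~\ref{comp.Irreducible}, every proper ideal is either graded prime of type (1) or a non-graded prime power. But if some non-graded prime power $P^n$ exists with $n\geq 2$, then $P^{n-1}$ is also a proper ideal properly containing $P^n$, and in $\bar L = L/\mathrm{gr}(P)$ the ideal $\bar P^n$ is the intersection of ideals properly containing it only down to $\langle p^{n-1}(c)\rangle$ — consistent — so instead I use that $P^2 \supsetneq P^3 \supsetneq \cdots$ is an infinite descending chain, and also $P \supsetneq P^2$; meanwhile non-graded prime powers force the existence of a cycle without exits, which I claim contradicts condition (K). Indeed, if every proper ideal is completely irreducible, then in particular the graph must satisfy condition (K): a cycle without (K) would yield, via Theorem~\ref{specificprime}(3), a non-graded prime ideal, hence non-graded ideals, but more usefully the ideals $I(H,B_H) + \langle (x^j - 1)(c)\rangle$ for varying $j$ give infinitely many incomparable ideals between $I(H,B_H)$ and $\langle\{c^0\}\rangle$, whose intersection is $I(H,B_H)$ (using Lemma~\ref{gcd-lemma} ideas in $K[x,x^{-1}]$) — this shows a proper ideal that is an irredundant infinite intersection, hence not completely irreducible after all, unless that ideal coincides with one of them. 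I would make this precise to conclude $E$ satisfies condition (K), hence (by \cite[Theorem 2.9.11]{AAS} or the standard fact) every ideal of $L$ is graded. Then, every graded ideal being completely irreducible means: via the admissible-pair correspondence and Theorem~\ref{comp.Irreducible}(1), every $I(H,S)$ is prime with the CSP condition, and — crucially — the set of graded ideals has no proper nonempty intersections that drop strictly below every member, which forces the ideal lattice to be a well-order (any nonempty set of ideals, if it had no least element, would have its intersection properly below all of them, contradicting complete irreducibility of that intersection; and any ideal $I$ that is completely irreducible has a unique cover from below being impossible to realize as a further intersection — combined with total ordering this yields well-ordering).

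For $(2) \Rightarrow (3)$: every ideal graded is equivalent to condition (K) by the standard structure theorem. Well-ordered under inclusion implies in particular totally ordered, so the admissible pairs form a chain under their partial order (transporting along the order-isomorphism $(H,S)\mapsto I(H,S)$). For the strong CSP: fix an admissible pair $(H,S)$ with $H\neq E^0$; then $I(H,S)$ is a proper ideal, hence completely irreducible by the hypothesis that well-ordering makes complete irreducibility automatic (every element of a well-ordered chain of ideals, except possibly a top, has the property that it is not the intersection of strictly larger ideals — because strictly larger ideals, being above it in a well-order, have a least one, which properly contains $I(H,S)$, so their intersection is that least one, not $I(H,S)$; if $I(H,S)$ is the top it is maximal hence completely irreducible trivially). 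So $I(H,S)$ is completely irreducible, and Theorem~\ref{comp.Irreducible}(1) gives that $(E\backslash(H,S))^0$ is downward directed and satisfies the strong CSP, which is what (3) requires.

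For $(3) \Rightarrow (1)$: condition (K) gives that every ideal of $L$ is graded, so every proper ideal is of the form $I(H,S)$ with $H\neq E^0$. Given such an ideal, I must check it satisfies the conditions of Theorem~\ref{comp.Irreducible}(1): that $E\backslash(H,S)$ satisfies condition (L) — automatic since $E$ satisfies (K), hence so does every quotient graph, hence (L) — that $(E\backslash(H,S))^0$ satisfies the strong CSP — given by hypothesis — and that $(E\backslash(H,S))^0$ is downward directed. The downward-directedness is the one point not handed to us verbatim, and here is where the chain condition on admissible pairs does the work: if $(E\backslash(H,S))^0$ failed to be downward directed, there would be $u,v$ with no common lower bound, and then the hereditary saturated closures generate two graded ideals strictly above $I(H,S)$ whose intersection is $I(H,S)$, contradicting the chain condition (two such ideals would have to be comparable, but the geometry of $u,v$ prevents it) — alternatively, a non-downward-directed $(E\backslash(H,S))^0$ directly produces incomparable admissible pairs above $(H,S)$. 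Once downward-directedness is in hand, Theorem~\ref{comp.Irreducible}(1) applies and $I(H,S)$ is completely irreducible; since this holds for every proper ideal, (1) follows.

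\textbf{Main obstacle.} I expect the delicate part to be the equivalence between the order-theoretic statement ``well-ordered under inclusion'' and the ideal-theoretic statement ``every proper ideal is completely irreducible,'' specifically pinning down that a non-graded prime power (which \emph{is} completely irreducible by Theorem~\ref{comp.Irreducible}) cannot coexist with (1) — this must be ruled out via condition (K) and the infinitely many incomparable ideals $\langle p(c)\rangle$ for distinct irreducibles $p$ inside $\langle\{c^0\}\rangle$, forcing their intersection to witness a failure of complete irreducibility somewhere. Getting the bookkeeping right — that the relevant intersection is genuinely irredundant and strictly smaller than every ideal in it — is the crux, and it is essentially the translation of ``$K[x,x^{-1}]$ is not well-ordered and $\{0\}$ is not completely irreducible there'' into the Leavitt path algebra via the Morita-type reductions of Lemma~\ref{cycle-ideal-lemma}(1), Remark~\ref{cycle-ideal-remark}, and Proposition~\ref{Morita equivalence}.
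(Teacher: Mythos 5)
Your proposal is correct in its logical skeleton and shares the two essential ingredients with the paper's proof: the order-theoretic argument that complete irreducibility of every proper ideal is equivalent to every nonempty family of ideals having a least element (your ``if $J=\bigcap Y\notin Y$ then $J$ is an intersection of ideals properly containing it'' is exactly the paper's argument, in both directions), and the use of Theorem~\ref{comp.Irreducible} to translate ``graded and completely irreducible'' into the strong CSP condition. Where you genuinely diverge is in how you obtain condition (K), the chain condition on admissible pairs, and downward directedness of the quotient graphs. The paper gets all three in one stroke: complete irreducibility implies irreducibility, Proposition~\ref{quasi-primaryequivalent} turns this into ``every proper ideal is a prime power,'' and the cited result \cite[Theorem 4.1]{AMR} then delivers ``every ideal graded, ideals form a chain, $E$ satisfies (K), admissible pairs form a chain,'' with the converse direction of that theorem also supplying the downward directedness needed for $(3)\Rightarrow(1)$. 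You instead re-derive these facts directly: condition (K) by exhibiting $I(H,B_H)=\bigcap_n P^n$ (or $\bigcap_j\bigl(I(H,B_H)+\langle f_j(c)\rangle\bigr)$) as an infinite irredundant intersection when a cycle without (K) exists, and downward directedness by producing incomparable admissible pairs above $(H,S)$ from a pair of vertices with no common lower bound. Both arguments are workable and make the proof more self-contained, at the cost of length; the paper's route is shorter and makes the conceptual point that this theorem is a refinement of the known characterization of Leavitt path algebras whose proper ideals are all prime powers.

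Two cautions on the parts you left as sketches. First, your statement that the hereditary saturated closures of $u$ and $v$ ``generate two graded ideals strictly above $I(H,S)$ whose intersection is $I(H,S)$'' is not what you need and is not true in general; your ``alternatively'' formulation is the right one, but it rests on the nontrivial fact that if $u$ and $v$ have no common lower bound then $v$ does not lie in the hereditary saturated closure of $\{w\mid u\geq w\}$ (provable by induction on the saturation steps), which must actually be written out. Second, the condition-(K) argument requires constructing, from a cycle $c$ without (K), the hereditary saturated set $H=E^0\setminus M(s(c))$ and verifying that $c$ has no exits in $E\backslash(H,B_H)$ so that Theorem~\ref{specificprime}(3) and Lemma~\ref{cycle-ideal-lemma} apply; this is standard but is precisely the content you are borrowing implicitly from the results the paper cites. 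Neither issue is fatal, but as written the proposal proves the theorem only modulo these fillable gaps.
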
  

\begin{proof}   
(1) $\Rightarrow$ (2) Suppose that every proper ideal of $L$ is completely irreducible. Then, by Proposition \ref{quasi-primaryequivalent}, every proper ideal of $L$ is a power of some prime ideal. Hence, according to~\cite[Theorem 4.1]{AMR}, every ideal of $L$ is graded, and the ideals form a chain under set inclusion. Now let $Y$ be a nonempty set of ideals of $L$, and let $J = \bigcap_{I\in Y}I$. If $J \notin Y$, then $J$ must be the intersection of all the ideals properly containing $J$, and hence not completely irreducible, contrary to hypothesis. Therefore $J \in Y$, showing that $Y$ has a least element under set inclusion. It follows that the ideals of $L$ are well-ordered.

(2) $\Rightarrow$ (1) Let us take a proper ideal $J$ of $L$, and show that it is completely irreducible, assuming (2). If $J$ is maximal, then this is trivially the case, and so we may assume that $J$ is not maximal. Let $Y$ be the set of all ideals of $L$ that properly contain $J$. Then, by hypothesis, $\bigcap_{I \in Y} I \in Y$, and hence $J \subsetneq \bigcap_{I \in Y} I$. It follows that $J$ is completely irreducible.

(1) $\Rightarrow$ (3) Suppose that (1) holds. Then, by Proposition~\ref{quasi-primaryequivalent}, every proper ideal of $L$ is a power of some prime ideal. Hence, by \cite[Theorem 4.1]{AMR}, every ideal of $L$ is graded, $E$ satisfies condition (K), and the admissible pairs $(H,S)$ form a chain under the partial order of admissible pairs. Given that every proper ideal of $L$ is completely irreducible and graded, applying Theorem \ref{comp.Irreducible}, we conclude that $(E\backslash(H,S))^{0}$ satisfies the strong CSP for each admissible pair $(H,S)$ with $H \neq E^{0}$.

(3) $\Rightarrow$ (1) Assuming that (3) holds, again, by~\cite[Theorem 4.1]{AMR}, every proper ideal of $L$ is graded and prime. Let $I$ be a proper ideal of $L$, and write $I = I(H,S)$, where $(H,S)$ is an admissible pair. Then, by Proposition~\ref{quasi-primaryequivalent}, $(E\backslash (H,S))^{0}$ is downward directed. Since, by (3), $(E\backslash (H,S))^{0}$ satisfies the strong CSP and $E\backslash (H,S)$ satisfies condition (L), we conclude, by Theorem \ref{comp.Irreducible}, that $I$ is completely irreducible, proving (1).
\end{proof} 

\begin{example} \label{Example where everyone comp.irr}
Let $E$ be the following graph.
\vspace{0.2in}

\[
\xymatrix{\ar@{.}[r] & {\bullet}^{v_3} \ar [r] \ar@(ul,ur)\ar@(dl,dr) & {\bullet}^{v_2} \ar [r] \ar@(ul,ur)\ar@(dl,dr) & {\bullet}^{v_1} \ar@(ul,ur)\ar@(dl,dr)}
\]

\vspace{0.2in}

\noindent
Then $E$ is clearly row-finite and satisfies condition (K). Moreover, the proper hereditary
saturated subsets of $E^0$ are $H_{0}=\emptyset$ and $H_{i} =\{v_{1},\dots, v_{i}\}$, for all $i\geq1$. It follows that the admissible pairs $(H_i, \emptyset)$ form a chain under the partial order of admissible pairs, and $(E\backslash(H_i, \emptyset))^{0}$ satisfies the strong CSP for each $i \geq 0$. Hence, by Theorem~\ref{everyidealcompirred}, every ideal of $L_{K}(E)$ is completely irreducible.
\end{example}

Next we examine the Leavitt path algebras where all completely irreducible ideals are graded.

\begin{proposition} \label{Every comp.irred.graded}
The following are equivalent for any Leavitt path algebra $L=L_K(E)$.
\begin{enumerate}
\item[$(1)$] Every completely irreducible ideal of $L$ is graded.

\item[$(2)$] Every prime ideal of $L$ is graded.

\item[$(3)$] Every ideal of $L$ is graded.

\item[$(4)$] The graph $E$ satisfies condition (K).
\end{enumerate}
\end{proposition}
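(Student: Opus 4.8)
The plan is to prove the equivalences in the cyclic order $(4) \Rightarrow (3) \Rightarrow (2) \Rightarrow (1) \Rightarrow (4)$, since most of the links are either standard Leavitt path algebra facts or immediate consequences of results already in the excerpt. The implication $(4) \Rightarrow (3)$ is a well-known theorem: by \cite{AAS} (the structure theorem for ideals, stated here as Theorem~\ref{arbitrary ideals}(1)), every ideal has the form $I(H,S) + \sum_{i \in Y} \langle f_i(c_i)\rangle$ where the $c_i$ are cycles without exits in $E \backslash (H,S)$; condition (K) forces $Y = \emptyset$, because a cycle without exits in a quotient graph is a cycle without (K), which contradicts condition (K). Hence every ideal equals its graded part. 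The implication $(3) \Rightarrow (2)$ is trivial, since prime ideals are in particular ideals.

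For $(2) \Rightarrow (1)$, I would argue contrapositively or directly: by Theorem~\ref{comp.Irreducible}, a completely irreducible ideal is either a graded ideal satisfying the conditions in (1) of that theorem — hence graded — or a power $P^n$ of a non-graded prime ideal. In the latter case $P$ itself is a non-graded prime ideal, contradicting (2). So if every prime is graded, the second alternative cannot occur, and every completely irreducible ideal is graded. (Alternatively, one could invoke Proposition~\ref{quasi-primaryequivalent} together with Lemma~\ref{Product=Intersection}(1), but routing through Theorem~\ref{comp.Irreducible} is cleanest since it is the immediately preceding result.)

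The one link requiring a little more thought is $(1) \Rightarrow (4)$. I would prove the contrapositive: if $E$ fails condition (K), I must produce a non-graded completely irreducible ideal. Failure of (K) means there is a cycle $c$ without (K) in $E$; passing to a suitable quotient $E \backslash (H,B_H)$ — taking $H$ to be a hereditary saturated set such that $c$ becomes a cycle without exits in the quotient with $(E\backslash(H,B_H))^0 = M(s(c))$ downward directed — one uses Theorem~\ref{specificprime}(3) to obtain a non-graded prime ideal $P = I(H,B_H) + \langle p(c)\rangle$ with $p$ irreducible. By Theorem~\ref{comp.Irreducible}(2), $P = P^1$ is completely irreducible, and it is non-graded (a power of a non-graded ideal is non-graded, as noted in the proof of Theorem~\ref{comp.Irreducible}), contradicting (1). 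The main obstacle here is the graph-theoretic bookkeeping needed to extract the appropriate hereditary saturated set $H$ from a cycle without (K); this is essentially the content of the construction behind type (3) of Theorem~\ref{specificprime}, and I would cite \cite{R-1} or \cite[Theorem 4.1]{AMR} rather than rederive it. Finally, I would note that the cycle $(3) \Rightarrow (2) \Rightarrow (1) \Rightarrow (4) \Rightarrow (3)$ closes all four equivalences.
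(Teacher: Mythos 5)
Your proposal is correct, but it closes the cycle of implications differently from the paper, and the difference is worth noting. The paper's chain is $(1)\Rightarrow(3)$, $(3)\Rightarrow(2)$, $(2)\Rightarrow(1)$, with $(3)\Leftrightarrow(4)$ cited from \cite[Proposition 2.9.9]{AAS}. The step $(1)\Rightarrow(3)$ is done in one line via Proposition~\ref{Everyone is int comp.irr.}: every proper ideal is an intersection of completely irreducible ideals, and an intersection of graded ideals is graded, so if all completely irreducible ideals are graded then all ideals are. This completely sidesteps the only nontrivial link in your version, namely $(1)\Rightarrow(4)$, where you must manufacture a non-graded completely irreducible ideal from a cycle $c$ without (K) --- taking $H = E^0\setminus M(s(c))$, checking it is hereditary and saturated, that $M(s(c))$ is downward directed, and that $c$ has no exits in $E\backslash(H,B_H)$, before invoking Theorem~\ref{specificprime}(3) and Theorem~\ref{comp.Irreducible}(2). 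That construction is standard and your deferral to \cite{R-1} is legitimate, but it is real work that the paper's routing avoids; your approach buys a self-contained proof of $(4)\Rightarrow(3)$ from the structure theorem (where the paper just cites \cite{AAS}), at the cost of reproving, in the $(1)\Rightarrow(4)$ step, graph-theoretic facts that are already packaged inside the cited equivalence $(3)\Leftrightarrow(4)$. Your $(2)\Rightarrow(1)$ step coincides with the paper's. Both arguments are sound.
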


\begin{proof}
(1) $\Rightarrow$ (3). By Proposition~\ref{Everyone is int comp.irr.}, every proper ideal of $L$ is the intersection of completely irreducible ideals. Thus if all completely irreducible ideals are graded, then so are all the other ideals of $L$ (given that $L$ itself is graded).

(3) $\Rightarrow$ (2). This is a tautology.

(2) $\Rightarrow$ (1). By Theorem~\ref{comp.Irreducible}, every non-graded completely irreducible ideal is a power of a non-graded prime ideal. Thus, if all the prime ideals in $L$ are graded, then so are the completely irreducible ideals.

(3) $\Leftrightarrow$ (4). See \cite[Proposition 2.9.9]{AAS}.
\end{proof}

Let us now describe the Leavitt path algebras where every irreducible ideal is completely irreducible.

\begin{proposition} \label{Irredcible = Comp Irred} 
The following are equivalent for any Leavitt path algebra $L=L_K(E)$.
\begin{enumerate}
\item[$(1)$] Every irreducible ideal of $L$ is completely irreducible.

\item[$(2)$] Every graded prime ideal of $L$ is completely irreducible.

\item[$(3)$] Every ideal of $L$ is graded, and every prime ideal of $L$ is completely irreducible.

\item[$(4)$] The graph $E$ satisfies condition (K), and $(E\backslash (H,B_{H}))^{0}$ satisfies the strong CSP for each admissible pair $(H,B_{H})$ such that $E^{0} \setminus H$ is downward directed. 
\end{enumerate}
\end{proposition}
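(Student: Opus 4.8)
The plan is to prove the four conditions equivalent by establishing the implications $(1) \Rightarrow (2) \Rightarrow (3) \Rightarrow (4) \Rightarrow (1)$, leaning heavily on the characterization of completely irreducible ideals in Theorem~\ref{comp.Irreducible} and the classification of prime and irreducible ideals in Theorem~\ref{specificprime} and Proposition~\ref{quasi-primaryequivalent}.

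First, $(1) \Rightarrow (2)$ is essentially immediate: every prime ideal is irreducible, so if every irreducible ideal is completely irreducible, then in particular every graded prime ideal is completely irreducible. For $(2) \Rightarrow (3)$, the content is to show that assuming (2) forces every ideal of $L$ to be graded; by Proposition~\ref{Every comp.irred.graded} (or \cite[Proposition 2.9.9]{AAS}) it suffices to show $E$ satisfies condition~(K), equivalently that there are no cycles without (K) in $E$. Suppose there were a cycle $c$ without (K); then by Theorem~\ref{specificprime}(3) one can build a non-graded prime ideal of the form $P = I(H,B_H) + \langle p(c)\rangle$ with $p$ irreducible and $E^0\setminus H = M(s(c))$. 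The graded prime ideal $I(H, B_H)$ then has proper ideals sitting above it of the form $I(H,B_H)+\langle p^k(c)\rangle$ (as in the proof of Theorem~\ref{comp.Irreducible}), and moreover $I(H,B_H)$ can be recovered as the intersection of such ideals — for instance $I(H,B_H) = \bigcap_k (I(H,B_H)+\langle p^k(c)\rangle)$, which follows since $\bigcap_k \langle p^k(x)\rangle = \{0\}$ in $K[x,x^{-1}]$ together with Lemma~\ref{cycle-ideal-lemma}(1) and Proposition~\ref{Morita equivalence} — contradicting complete irreducibility of $I(H,B_H)$. Hence $E$ satisfies (K), every ideal is graded, and since every prime ideal is now graded, (2) gives that every prime ideal is completely irreducible, yielding (3). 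The step $(3) \Rightarrow (4)$ then combines Proposition~\ref{Every comp.irred.graded} (condition~(K) from "every ideal graded") with Theorem~\ref{comp.Irreducible}(1): a graded prime ideal $I(H,B_H)$ with $E^0\setminus H$ downward directed (the relevant shape of graded primes when (K) holds, by Theorem~\ref{specificprime}(1) together with the fact that condition~(K) forces $B_H$-type primes) is completely irreducible exactly when $(E\backslash(H,B_H))^0$ satisfies condition~(L) — automatic under (K) — and the strong CSP; so (3) gives precisely the strong CSP clause in (4).

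Finally, for $(4) \Rightarrow (1)$: assume $E$ satisfies (K), so by Proposition~\ref{Every comp.irred.graded} every ideal of $L$ is graded, and in particular every irreducible ideal $I$ is graded. By Proposition~\ref{quasi-primaryequivalent}, an irreducible ideal is prime-power, and being graded it equals a graded prime ideal (Lemma~\ref{Product=Intersection}(1)), say $I = I(H,S)$ with $(E\backslash(H,S))^0$ downward directed. Under condition~(K), one checks that such a graded prime must have $S = B_H$ (a non-graded-type subtlety like the $B_H\setminus\{u\}$ case of Theorem~\ref{specificprime}(2) is still allowed here, but that case is already completely irreducible by Remark~\ref{graded prime comp irred}, so it may be set aside), so $E^0\setminus H$ is downward directed and hypothesis (4) supplies the strong CSP for $(E\backslash(H,B_H))^0$; condition~(L) for $E\backslash(H,B_H)$ follows from (K) for $E$ descending to the quotient graph. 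Theorem~\ref{comp.Irreducible}(1) then certifies that $I$ is completely irreducible, proving (1).

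I expect the main obstacle to be the implication $(2)\Rightarrow(3)$ — specifically, producing from a hypothetical cycle without (K) an explicit infinite descending chain of ideals whose intersection is a graded prime ideal, and verifying carefully (via Remark~\ref{cycle-ideal-remark} and the Morita correspondence of Proposition~\ref{Morita equivalence}) that the intersection really is the graded prime rather than something larger. The rest is bookkeeping: translating between the graph-theoretic conditions of Theorem~\ref{specificprime}, the structural dichotomy of Theorem~\ref{comp.Irreducible}, and making sure the "special type" graded primes of Theorem~\ref{specificprime}(2) — which are always completely irreducible by Remark~\ref{graded prime comp irred} — do not interfere with the equivalences.
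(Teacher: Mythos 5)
Your proposal is correct and follows the same cycle of implications $(1)\Rightarrow(2)\Rightarrow(3)\Rightarrow(4)\Rightarrow(1)$ as the paper, using the same ingredients (Theorem~\ref{comp.Irreducible}, Theorem~\ref{specificprime}, Propositions~\ref{quasi-primaryequivalent} and~\ref{Every comp.irred.graded}, Remark~\ref{graded prime comp irred}). The only real divergence is in $(2)\Rightarrow(3)$: where you build an explicit descending chain $I(H,B_H)+\langle p^k(c)\rangle$ with intersection $I(H,B_H)$, the paper simply starts from a hypothetical non-graded prime $P$, notes that $\mathrm{gr}(P)=I(H,B_H)$ is a graded prime, and cites the forward direction of Theorem~\ref{comp.Irreducible} to conclude that its complete irreducibility would force condition (L) on $E\backslash(H,B_H)$, contradicting the fact that $c$ has no exits there; your chain argument is sound but amounts to re-proving that special case of Theorem~\ref{comp.Irreducible}, so the step you flag as the "main obstacle" is in fact available off the shelf.
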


\begin{proof}
(1) $\Rightarrow$ (2). By Proposition~\ref{quasi-primaryequivalent}, every graded prime ideal of $L$ is irreducible, from which the desired conclusion follows.

(2) $\Rightarrow$ (3). Suppose that (2) holds and that $P$ is a non-graded prime ideal of $L$. Then, by Theorem~\ref{specificprime}, $P = I(H,B_{H})+\langle f(c)\rangle$, where $H=P \cap E^{0}$, $c$ is a cycle without (K), $E^0\setminus H = M(s(c))$, and $f(x) \in K[x, x^{-1}]$ is irreducible. By the same theorem, $I(H,B_{H})$ is a (graded) prime ideal, and so, by (2), $I(H,B_{H})$ is completely irreducible. Hence, by Theorem~\ref{comp.Irreducible}, $E\backslash (H,B_{H})$ satisfies condition (L), which contradicts $c$ being a cycle without (K) in $E$. Therefore, if (2) holds, then all prime ideals of $L$ must be graded, and so (3) must also hold, by Proposition~\ref{Every comp.irred.graded}.

(3) $\Rightarrow$ (4). Suppose that (3) holds. Then, by Proposition~\ref{Every comp.irred.graded}, $E$ satisfies condition $(K)$. Now let $H$ be a hereditary saturated subset of $E^0$, such that $E^{0} \setminus H$ is downward directed. Then $P=I(H,B_{H})$ is a graded prime ideal of $L$, by Theorem~\ref{specificprime}. Since $P$ is completely irreducible, by hypothesis, Theorem~\ref{comp.Irreducible} implies that $(E\backslash(H,B_{H}))^0$ satisfies the strong CSP.

(4) $\Rightarrow$ (1). Suppose that (4) holds. Let us take an irreducible ideal  $I$ of $L$, and show that it is completely irreducible. By Proposition~\ref{Every comp.irred.graded}, $I$ must be graded, and so Proposition~\ref{quasi-primaryequivalent} implies that $I$ is a graded prime ideal. By Remark~\ref{graded prime comp irred} and Theorem~\ref{specificprime}, we may assume that $I=I(H,B_{H})$, where $E^0 \setminus H = (E\backslash (H,B_{H}))^{0}$ is downward directed. Since, by hypothesis, $E$ satisfies condition (K), the quotient graph $E\backslash(H,B_{H})$ must satisfy condition (L). It now follows from Theorem~\ref{comp.Irreducible} that $I$ is completely irreducible.
\end{proof}

We conclude this section with an example of a graph $E$ such that every irreducible ideal of $L_K(E)$ is completely irreducible, but not every proper ideal is completely irreducible.

\begin{example} \label{div-eg}
Let $E$ be the following row-finite graph.
\[\xymatrix{ 
& \ar@{.}[l] & {\bullet}^{v_{-2}} \ar [l] & {\bullet}^{v_{-1}} \ar [l] & {\bullet}^{v_{0}} \ar [r] \ar [l] & {\bullet}^{v_{1}} \ar [r] & {\bullet}^{v_{2}} \ar [r] & \ar@{.}[r] & }
\]
Clearly $E$ satisfies condition (K), and it is easy to see that the hereditary saturated subsets of $E^0$ are precisely $E^0$, $W^+ = \{v_i \mid i \geq 1\}$, $W^- = \{v_i \mid i \leq -1\}$, and $\emptyset$. Now, each of $(E \setminus (W^+,\emptyset))^{0} = E^{0} \setminus W^+$ and $(E \setminus (W^-,\emptyset))^{0} = E^{0} \setminus W^-$
satisfies the strong CSP with respect to itself, while $E^{0} \setminus E^{0}$ and $E^{0} \setminus \emptyset$ are not downward directed. Thus, $E$ satisfies the properties in condition (4) of Proposition~\ref{Irredcible = Comp Irred}.

On the other hand, the hereditary saturated subsets of $E^0$ certainly do not form a chain, and so neither do the admissible pairs. Thus $E$ does not satisfy condition (3) in Theorem~\ref{everyidealcompirred}. We conclude that in $L_K(E)$ every irreducible ideal is completely irreducible, but not every proper ideal is completely irreducible. (Specifically, the zero ideal is not completely irreducible, by Proposition~\ref{Zero Comp Irreducible}.)
\end{example}

\section{Products and Intersections of Completely \\ Irreducible Ideals}\label{prod-comp-irred-section}

In this section we characterize the ideals in an arbitrary Leavitt path algebra that can be factored as products (and intersections) of completely irreducible ideals. We also describe the Leavitt path algebras in which every ideal can be represented as a product (and intersection) of such ideals. We shall require the following observation.

\begin{lemma}\label{gradedprodlemma} \cite[Lemma 4.3]{AMR}
The following are equivalent for any Leavitt path algebra $L=L_K(E)$ and positive integer $n$.
\begin{enumerate}
\item[$(1)$] The zero ideal is the (irredundant) intersection of $n$ prime ideals.

\item[$(2)$] The zero ideal is the (irredundant) intersection of $n$ graded prime ideals.

\item[$(3)$] $E^{0}$ is the (irredundant) union of $n$ maximal tails.
\end{enumerate}

Moreover, the maximal tails in (3) can be taken to be the complements in $E^0$ of the sets of vertices contained in the prime ideals in (1) or (2).
\end{lemma}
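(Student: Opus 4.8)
The plan is to prove the cycle of implications $(3)\Rightarrow(2)\Rightarrow(1)\Rightarrow(3)$, carrying the ``(irredundant)'' qualifier at each step and verifying the ``moreover'' clause along the way. Two elementary facts are used throughout: a graded ideal $J$ with $J\cap E^0=\emptyset$ must be $\{0\}$ (by Theorem~\ref{arbitrary ideals}(3), since $B_\emptyset=\emptyset$), and for any ideal $I$ the set $I\cap E^0$ is hereditary and saturated and equals $\mathrm{gr}(I)\cap E^0$. For $(3)\Rightarrow(2)$: given $E^0=M_1\cup\cdots\cup M_n$ with the $M_i$ maximal tails, put $H_i=E^0\setminus M_i$ and $Q_i=I(H_i,B_{H_i})$. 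Each $H_i$ is hereditary saturated by (MT1) and (MT2), and since $E^0\setminus H_i=M_i$ is downward directed, $Q_i$ is a graded prime by Theorem~\ref{specificprime}(1). The intersection $\bigcap_i Q_i$ is graded and meets $E^0$ in $\bigcap_i H_i=E^0\setminus\bigcup_i M_i=\emptyset$, hence is $\{0\}$; and for each $j$, $\bigcap_{i\neq j}Q_i$ is graded with vertex set $\bigcap_{i\neq j}H_i$, which is empty exactly when $M_j\subseteq\bigcup_{i\neq j}M_i$, so the two are simultaneously (ir)redundant. This also realises the $M_i$ as the complements of the vertex sets of the $Q_i$. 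The implication $(2)\Rightarrow(1)$ is immediate and preserves irredundancy.

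For $(1)\Rightarrow(3)$, start from $\{0\}=P_1\cap\cdots\cap P_n$ with the $P_i$ prime, and set $H_i=P_i\cap E^0$, $M_i=E^0\setminus H_i$. Each $M_i$ is a maximal tail, since $H_i$ is hereditary saturated and $E^0\setminus H_i$ is downward directed in each of the three cases of Theorem~\ref{specificprime}; and $\bigcup_i M_i=E^0\setminus\bigcap_i H_i=E^0$, giving $(3)$ in the plain version with the prescribed $M_i$. For the irredundant version I would prove the key claim: \emph{if $\{0\}=P_1\cap\cdots\cap P_n$ is irredundant with $n\geq 2$, then every $P_i$ is graded.} Granting it, $P_i\subseteq I(H_i,B_{H_i})$ for each $i$, so $\bigcap_i I(H_i,B_{H_i})$ is graded, lies between $\{0\}$ and itself, and is therefore $\{0\}$; and if $\bigcap_{i\neq j}I(H_i,B_{H_i})=\{0\}$ for some $j$, then $\bigcap_{i\neq j}P_i\subseteq\{0\}$, contradicting irredundancy. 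So $\bigcup_i M_i$ is irredundant, which completes $(1)\Rightarrow(3)$ and also gives $(2)$ with the graded primes $I(H_i,B_{H_i})$.

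To prove the key claim, suppose $P_1$ is non-graded, so by Theorem~\ref{specificprime}(3), $P_1=I(H_1,B_{H_1})+\langle f(c)\rangle$ with $c$ a cycle without exits in $E\backslash(H_1,B_{H_1})$, $E^0\setminus H_1=M(s(c))$, and $f(x)\in K[x]$ irreducible with nonzero constant term; let $J=\bigcap_{i\geq 2}P_i$, which is nonzero by irredundancy and satisfies $\mathrm{gr}(P_1)\cdot J=\mathrm{gr}(P_1)\cap J\subseteq P_1\cap J=\{0\}$ (Lemma~\ref{Product=Intersection}(1)). If $H_1=\emptyset$ then $P_1=\langle f(c)\rangle$, $E^0=M(s(c))$ is downward directed, $c$ has no exits in $E$, and Lemma~\ref{cycle-ideal-lemma} together with Lemma~\ref{Product}(1) quickly forces any ideal $J$ with $P_1\cap J=\{0\}$ to be $\{0\}$ --- a contradiction. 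If $H_1\neq\emptyset$, fix $w\in H_1\subseteq\mathrm{gr}(P_1)$: were $J$ to contain a vertex $u$, then $u\notin H_1$ (else $u=u^2\in\langle u\rangle\cdot\langle u\rangle\subseteq J\cdot\mathrm{gr}(P_1)=\{0\}$), so $u\geq s(c)$, whence $s(c)\in J$, $\langle\{c^0\}\rangle\subseteq J$, and $f(c)\in P_1\cap J=\{0\}$, a contradiction; so $J$ has no vertex, $\mathrm{gr}(J)=\{0\}$, and Theorem~\ref{arbitrary ideals}(1) writes $J=\sum_k\langle g_k(d_k)\rangle$ with the $d_k$ cycles without exits in $E$. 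Picking $d$ with $0\neq g(d)\in J$, the relation $w\,g(d)\in\mathrm{gr}(P_1)\cdot J=\{0\}$ forces $s(d)\notin H_1$, so $s(d)\geq s(c)$; since $d$ has no exits, this puts $s(c)$ on $d$ and then makes $c$ and $d$ the same cycle, so $c$ has no exits in $E$ and $\{c^0\}\cap H_1=\emptyset$. Finally, by Lemma~\ref{cycle-ideal-lemma}(1), Remark~\ref{cycle-ideal-remark}, and Proposition~\ref{Morita equivalence}, $\langle\{c^0\}\rangle\cong\M_{\Lambda}(K[x,x^{-1}])$ with $P_1\cap\langle\{c^0\}\rangle=\M_{\Lambda}(\langle f(x)\rangle)$ (using that $f$ is irreducible and $\langle\{c^0\}\rangle\not\subseteq P_1$) and $J\cap\langle\{c^0\}\rangle=\M_{\Lambda}(\langle q(x)\rangle)$ with $q\neq 0$ (it contains $g(d)\neq 0$); but then $\{0\}=(P_1\cap J)\cap\langle\{c^0\}\rangle=\M_{\Lambda}(\langle\operatorname{lcm}(f,q)\rangle)$ forces $q=0$, a contradiction.

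The main obstacle is exactly this key claim. The plain equivalences, and the transfer of irredundancy between $(2)$ and $(3)$, are routine bookkeeping with Theorems~\ref{arbitrary ideals} and~\ref{specificprime}; the genuine difficulty in $(1)\Rightarrow(3)$ is that a non-graded prime drags along a ``cyclic tail'' $\langle f(c)\rangle$ that can persist inside an intersection which is already trivial on vertices, and eliminating this possibility needs both the matrix-ring description of $\langle\{c^0\}\rangle$ and the delicate identification of the cycle occurring in $J$ with the cycle $c$ of $P_1$.
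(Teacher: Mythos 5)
The paper does not prove Lemma~\ref{gradedprodlemma} at all: it is imported verbatim from \cite[Lemma 4.3]{AMR}, so there is no in-paper proof to compare against. Judged on its own, your argument is essentially correct and complete. The routine parts --- the dictionary $M_i \leftrightarrow H_i = E^0\setminus M_i \leftrightarrow I(H_i,B_{H_i})$ via Theorem~\ref{specificprime}(1) and Theorem~\ref{arbitrary ideals}(3), the fact that a graded ideal meeting $E^0$ trivially is zero, and the resulting transfer of (ir)redundancy between intersections of graded primes and unions of maximal tails --- are handled correctly, and they also deliver the ``moreover'' clause. You have correctly identified that the only real content is the claim that an irredundant intersection of $n\geq 2$ primes equal to $\{0\}$ forces every factor to be graded (without this, replacing $P_i$ by $\mathrm{gr}(P_i)$ could a priori destroy irredundancy and change $n$), and your proof of that claim --- reducing to $P_1\cap J=\{0\}$ with $J\neq\{0\}$ vertex-free, identifying the cycle of $J$ with the cycle $c$ of $P_1$ via the no-exits condition, and then getting a contradiction inside $\langle\{c^0\}\rangle\cong\M_{\Lambda}(K[x,x^{-1}])$ using Lemma~\ref{cycle-ideal-lemma}, Remark~\ref{cycle-ideal-remark}, and Proposition~\ref{Morita equivalence} --- is sound. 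Two steps are stated imprecisely, though each is easily repaired: the inference ``$w\,g(d)\in\mathrm{gr}(P_1)\cdot J=\{0\}$ forces $s(d)\notin H_1$'' is not valid for a fixed $w$ (the product is zero regardless); you should instead argue that $g(d)\neq 0$ and $\mathrm{gr}(P_1)\cap J=\{0\}$ give $g(d)\notin I(H_1,B_{H_1})$, whence $s(d)\notin H_1$ (equivalently, take $w=s(d)$ and use $s(d)g(d)=g(d)$). Likewise ``lies between $\{0\}$ and itself'' should read: $\bigcap_i I(H_i,B_{H_i})$ is graded with vertex set $\bigcap_i H_i=\emptyset$, hence is $\{0\}$. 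Finally, the conclusion of the matrix computation is that $\M_{\Lambda}(\langle\operatorname{lcm}(f,q)\rangle)=\{0\}$ forces $\operatorname{lcm}(f,q)=0$, which is impossible for $f,q\neq 0$; with these small adjustments the argument stands.
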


We are now ready for the last of our main results.

\begin{theorem} \label{Intersection of completely irreducibles}
Let $L=L_K(E)$ be a Leavitt path algebra, and let $I$ be a proper ideal of $L$. Then the following are equivalent.
\begin{enumerate}
\item[$(1)$] $I$ is the product of (finitely many) completely irreducible ideals.

\item[$(2)$] $I$ is the intersection of finitely many completely irreducible ideals.

\item[$(3)$] $I = I(H,S) + \sum_{i=1}^n \langle f_i(c_i)\rangle$, where 
\begin{enumerate}
\item[$(i)$] $0 \leq n$ is an integer, with $n=0$ indicating that $I=I(H,S)$; 
\item[$(ii)$] each $c_i$ is a cycle without exits in $E \setminus (H,S)$; 
\item[$(iii)$] each $f_i(x) \in K[x]$ is a polynomial with a nonzero constant term;
\item[$(iv)$] $(E \setminus (H,S))^{0} = \bigcup_{i=1}^m M_{i}$ is the irredundant union of finitely many maximal tails, with $n\leq m$, such that, for each $i \in \{1, \dots, n\}$ we have $s(c_i) \in M_{i}$ and $s(c_{i})\notin M_{j}$ for all $j\in \{1,\dots, m\} \setminus \{i\}$, and for each $i \in \{n+1, \dots, m\}$ every cycle with source in $M_i$ has an exit and $M_{i}$ satisfies the strong CSP.
\end{enumerate}
\end{enumerate}
\end{theorem}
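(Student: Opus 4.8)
The plan is to prove the cycle of implications $(3)\Rightarrow(2)\Rightarrow(1)\Rightarrow(3)$, with $(2)\Leftrightarrow(1)$ being nearly immediate from Proposition~\ref{prime-power-intersect} combined with Theorem~\ref{comp.Irreducible} (completely irreducible ideals are prime-power ideals, so a finite product of them equals the corresponding finite intersection, and vice versa), and the real content residing in $(3)\Rightarrow(2)$ and $(1)\Rightarrow(3)$. Throughout I would work in the quotient $\bar L = L/I(H,S) \cong L_K(E\setminus(H,S))$, reducing to the case where the graded part is zero, and I would use Theorem~\ref{arbitrary ideals}(1) to write $I$ in its canonical form.

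For $(3)\Rightarrow(2)$: given the data in $(3)$, let $M_1,\dots,M_m$ be the irredundant union of maximal tails covering $(E\setminus(H,S))^0$, and let $P_i$ be the graded prime ideal of $L$ whose vertex set is $E^0\setminus M_i$ — by Lemma~\ref{gradedprodlemma} (applied in $\bar L$, or directly) these are exactly the graded prime ideals arising from the maximal tail decomposition, and $I(H,S) = \bigcap_{i=1}^m P_i$ is an irredundant intersection. For $i\in\{1,\dots,n\}$, factor $f_i(x) = p_{i,1}^{e_{i,1}}(x)\cdots p_{i,k_i}^{e_{i,k_i}}(x)$ into powers of non-conjugate irreducibles in $K[x]$, and set $Q_{i,j} = P_i + \langle p_{i,j}(c_i)\rangle$. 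By Theorem~\ref{specificprime}(3) (using that $s(c_i)\in M_i$, $c_i$ has no exit, $(E\setminus(H,S))^0 \supseteq M_i$ is downward directed on $M_i$, and $S_l = B_{H_l}$ by the analysis in Lemma~\ref{unique-cycle-lemma}(1)) each $Q_{i,j}$ is a non-graded prime ideal, hence $Q_{i,j}^{e_{i,j}}$ is completely irreducible by Theorem~\ref{comp.Irreducible}(2). For $i\in\{n+1,\dots,m\}$, the hypothesis that every cycle with source in $M_i$ has an exit means $E\setminus(H_i,B_{H_i})$ satisfies condition (L), and $M_i$ satisfies the strong CSP, so $P_i$ is completely irreducible by Theorem~\ref{comp.Irreducible}(1). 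Now Lemma~\ref{prime-gr-part}, Lemma~\ref{unique-cycle-lemma}(5), and Lemma~\ref{gcd-lemma} assemble these: $I(H_i,B_{H_i}) + \langle f_i(c_i)\rangle = \prod_j Q_{i,j}^{e_{i,j}} = \bigcap_j Q_{i,j}^{e_{i,j}}$, and Lemma~\ref{unique-cycle-lemma}(5) gives $I = \prod_{i=1}^n (P_i + \langle f_i(c_i)\rangle) \cdot \prod_{i=n+1}^m P_i$, which by Proposition~\ref{prime-power-intersect} equals the intersection of all the $Q_{i,j}^{e_{i,j}}$ (for $i\le n$) together with the $P_i$ (for $i>n$) — a finite intersection of completely irreducible ideals.

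For $(1)\Rightarrow(3)$: suppose $I = D_1\cdots D_t$ with each $D_k$ completely irreducible. By Theorem~\ref{comp.Irreducible}, each $D_k$ is either a power of a non-graded prime ideal or a graded prime ideal of the special (strong CSP, condition (L)) type; using Theorem~\ref{arbitrary ideals}(4) to commute factors and grouping powers of the same prime, write $I = P_1^{r_1}\cdots P_s^{r_s}$ with the $P_j$ distinct prime. By Lemma~\ref{unique-cycle-lemma} (after passing to $\mathrm{gr}(I)$, which is the intersection of the graded parts $\mathrm{gr}(P_j)$, each itself a graded prime by the above), I would extract the canonical form $I = I(H,S) + \sum_{i=1}^n\langle f_i(c_i)\rangle$ and read off from Lemma~\ref{unique-cycle-lemma}(4,5) exactly the indexing in condition (3)(iv): the maximal tails $M_j = E^0\setminus(P_j\cap E^0)$, the separation property of the $s(c_i)$, and the identification of which $P_j$ carry a cycle-polynomial versus which are "pure" graded primes. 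The strong CSP for the $M_i$ with $i>n$, and the condition-(L) property (every cycle with source in $M_i$ has an exit), then fall out of applying Theorem~\ref{comp.Irreducible}(1) to the completely irreducible graded primes among the $P_j$; and the $M_j$ not appearing as some $\mathrm{gr}(P_i$ carrying a cycle$)$ must still be among the graded completely irreducible factors, forcing their strong-CSP/condition-(L) behavior.

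The main obstacle I anticipate is the bookkeeping in $(1)\Rightarrow(3)$: a single completely irreducible factor that is a power $P^r$ of a non-graded prime contributes both a graded prime $\mathrm{gr}(P)$ to the intersection forming $I(H,S)$ and a polynomial factor, and when several such factors share the same graded part one must combine their polynomials (Lemma~\ref{Product}(1)) while keeping distinct graded parts separate (Lemma~\ref{Product}(2)); matching this against the irredundant maximal-tail decomposition of $(E\setminus(H,S))^0$ and verifying the separation condition on the $s(c_i)$ requires the full strength of Lemma~\ref{unique-cycle-lemma}, and care is needed to ensure the union $\bigcup M_i$ can be taken irredundant and that no maximal tail is both "cycle-carrying" and "pure." I expect this reconciliation — rather than any single algebraic computation — to be where the argument is most delicate.
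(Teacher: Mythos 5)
Your proposal follows essentially the same route as the paper: reduce modulo $I(H,S)$, identify $I(H,S)$ with the irredundant intersection of the graded parts of the factors, use Lemma~\ref{unique-cycle-lemma}(4,5) for the product formula and the separation of the $s(c_i)$, Lemma~\ref{prime-gr-part} and Lemma~\ref{gcd-lemma} to split each cycle-carrying factor into powers of non-graded primes, Lemma~\ref{gradedprodlemma} for the maximal-tail decomposition, and Proposition~\ref{prime-power-intersect} with Theorem~\ref{comp.Irreducible} for $(1)\Leftrightarrow(2)$. The one step you flag as delicate but leave unresolved --- why the ``pure'' graded primes $I(H_i,B_{H_i})$, $i>n$, must coincide with some of the original completely irreducible factors --- is exactly where the paper invokes Theorem~\ref{Uniqueness} to match the two irredundant prime-power factorizations; with that citation added your argument is the paper's.
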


\begin{proof}
(1) $\Rightarrow$ (3) Suppose that we have $I = P_1 \cdots P_m$ for some completely irreducible ideals $P_1, \dots, P_m$. By Theorem~\ref{arbitrary ideals}(1), $I=I(H,S)+\sum_{i\in Y} \langle f_{i}(c_{i})\rangle$, where each $c_i$ is a cycle without exits in $E \setminus (H,S)$, and each $f_i(x) \in K[x]$ is a polynomial with a nonzero constant term. Using Lemma~\ref{Product=Intersection}(2) and the fact that a product of graded ideals is graded, we have
\[
I(H,S) = \mathrm{gr}(I) \subseteq \bigcap_{i=1}^m \mathrm{gr}(P_i) = \prod_{i=1}^m \mathrm{gr}(P_i) \subseteq \mathrm{gr}(P_1 \cdots P_m) = \mathrm{gr}(I),
\]
and hence $I(H,S) = \bigcap_{i=1}^m \mathrm{gr}(P_i)$. By Theorem~\ref{comp.Irreducible}, Proposition~\ref{quasi-primaryequivalent}, and Theorem~\ref{specificprime}, each $\mathrm{gr}(P_i)$ is a prime ideal. Upon dropping terms, if needed, we may assume that the intersection is irredundant. If $I$ is non-graded, then, upon further reindexing, Lemma~\ref{unique-cycle-lemma}(2,4,5) gives that $|Y| = n \leq m$ for some positive integer $n$, that for each $i \in \{1, \dots, n\}$ we have $s(c_{i}) \notin P_{i}$ and $s(c_{i}) \in P_{j}$ for all $j \in \{1, \dots, m\} \setminus \{i\}$, and that 
\begin{equation*}
I = \prod_{i=1}^n (I(H_{i},B_{H_{i}}) + \langle f_{i}(c_{i})\rangle) \cdot \prod_{i=n+1}^m I(H_{i},B_{H_{i}}),  \tag{$\dag$}
\end{equation*}
where $\mathrm{gr}(P_{i}) = I(H_{i},B_{H_{i}})$ for each $i$. If $I$ is graded, then $I = I(H,S) = \bigcap_{i=1}^m \mathrm{gr}(P_i)$ and taking $n$ to be $0$, we have $I= \prod_{i=n+1}^m I(H_{i},B_{H_{i}})$, by Lemma~\ref{Product=Intersection}(2). Thus equation $(\dag)$ describes $I$ in both situations. Now, by Lemma~\ref{prime-gr-part}, $I(H_{i},B_{H_{i}}) + \langle f_{i}(c_{i})\rangle$ is a product of non-graded prime ideals, for each $i \in \{1, \dots, n\}$. Hence $(\dag)$ gives a representation of $I$ a product of prime-power ideals. Upon combining copies of the same prime ideal, dropping any redundant terms, and reindexing, we can apply Theorem~\ref{Uniqueness} (and Theorem~\ref{comp.Irreducible}) to conclude that for each $i \in \{n+1, \dots, m\}$ the ideal $I(H_{i},B_{H_{i}})$ must be one of the completely irreducible ideals in the product $I = P_1 \cdots P_m$.

Since $I(H,S) = \bigcap_{i=1}^m \mathrm{gr}(P_i)$, in the ring $L/I(H,S)\cong L_{K}(E\backslash (H,S))$, by Lemma \ref{gradedprodlemma}, we have $(E\backslash (H,S))^{0} = \bigcup_{i=1}^m M_i$, where each
\[
M_i = (E\backslash (H,S))^{0} \setminus ((E\backslash (H,S))^{0} \cap \bar{Q}_i) 
\]
is a maximal tail, the union is irredundant, and $\bar{Q}_i = \mathrm{gr}(P_{i})/I(H,S)$. Then, for each $i \in \{1, \dots, n\}$, we have $s(c_i) \in M_{i}$ and $s(c_{i})\notin M_{j}$ for all $j\in \{1,\cdots, m\} \setminus \{i\}$. Moreover, for each $i \in \{n+1, \dots, m\}$, the ideal $\bar{Q}_i = I(H_{i},B_{H_{i}}) /I(H,S)$ is completely irreducible, and hence, by Theorem~\ref{comp.Irreducible}, every cycle with source in $M_i$ has an exit and $M_{i}$ satisfies the strong CSP. Thus the conditions in (3) hold for $I$.

(3) $\Rightarrow$ (1). Supposing that $I$ satisfies (3), we have $(E\backslash (H,S))^{0}= \bigcup_{i=1}^{m} M_{i}$, with the $M_i$ as in $(iv)$. In $L/I(H,S)\cong L_{K}(E\backslash (H,S))$, let $H_{i} = (E\backslash (H,S))^{0} \backslash M_{i}$, and let $P_i$ be the ideal of $L$ containing $I(H,S)$ such that $\bar{P}_{i} = P_i/I(H,S) = I(H_{i},B_{H_{i}})$, for each $i \in \{1, \dots, m\}$. Then, by hypothesis, $c_{i}\notin \bar{P}_{i}$ for $i \in \{1,\dots, n\}$, and, by Theorem~\ref{specificprime}, each $\bar{P}_{i}$ is a (graded) prime ideal. Moreover, by Theorem~\ref{comp.Irreducible}, $\bar{P}_{i}$ is completely irreducible for $i \in \{n+1,\dots, m\}$. Also, by Lemma~\ref{gradedprodlemma}, $\{0\} = \bigcap_{i=1}^m \bar{P}_{i}$ is an irredundant intersection in $L/I\cong L_{K}(E\backslash (H,S))$. Then, using Lemma~\ref{Product=Intersection}(2) and Lemma~\ref{unique-cycle-lemma} (depending on whether or not $I$ is graded), we have
\begin{equation*}
I/I(H,S) = \prod_{i=1}^n (I(H_{i},B_{H_{i}}) + \langle f_{i}(c_{i})\rangle) \cdot \prod_{i=n+1}^m I(H_{i},B_{H_{i}}). \tag{$\ddag$}
\end{equation*}
Now, for each $i \in \{1, \dots, n\}$, let $J_i = P_i + \langle f_{i}(c_{i})\rangle \subseteq L$. Then, by Lemma~\ref{prime-gr-part}, each $\bar{J}_i$ is a product of non-graded prime ideals, and is hence is a product of completely irreducible ideals, by Theorem~\ref{comp.Irreducible}. Then equation $(\ddag)$ implies that $I = J_1 \cdots J_n P_{n+1} \cdots P_m$, which shows that $I$ is a product of completely irreducible ideals.

The equivalence of (1) and (2) follows from Proposition~\ref{prime-power-intersect} and Theorem~\ref{comp.Irreducible}.
\end{proof}

This theorem makes it easy to construct Leavitt path algebras having ideals that are or are not intersections (and products) of finitely many completely irreducible ideals. 

\begin{example} \label{No intersect of finitely many Comp Irred} 
Let $E$ be the following graph.

\vspace{0.2in}

\[
\xymatrix{{\bullet}^{v_1}\ar@(dl,ul) \ar@(ul,ur) \ar[drr] & {\bullet}^{v_2} \ar@(dl,ul) \ar@(ul,ur) \ar[dr] & {\bullet}^{v_3} \ar@(dl,ul) \ar@(ul,ur) \ar[d] & \dots   \\
 &  &  {\bullet}^{v_0} & } 
\]

\vspace{0.2in}

\noindent
Clearly, $\{v_0\}$ is hereditary and saturated. Taking $I = I(\{v_0\}, \emptyset)$, we shall show that $I$ is not the intersection or product of any finite collection of completely irreducible ideals in $L_K(E)$.

It is easy to see that $E\backslash (\{v_0\}, \emptyset)$ has the following form.

\vspace{0.2in}

\[
\xymatrix{{\bullet}^{v_1}\ar@(dl,ul) \ar@(ul,ur) & {\bullet}^{v_2} \ar@(dl,ul) \ar@(ul,ur) & {\bullet}^{v_3} \ar@(dl,ul) \ar@(ul,ur) & \dots} 
\]

\vspace{0.2in}

\noindent
In this graph, the maximal tails are precisely the sets of the form $\{v_i\}$ ($i \geq 1$), and so $(E\backslash (\{v_0\}, \emptyset))^0$ cannot be expressed as the union of finitely many maximal tails. Therefore $I$ is not the intersection or product of any finite collection of completely irreducible ideals in $L_K(E)$, by Theorem~\ref{Intersection of completely irreducibles}.

Now let $E$ to be the following graph.

\vspace{0.2in}

\[
\xymatrix{{\bullet}^{v_1}\ar@(dl,ul) \ar@(ul,ur) \ar[dr] & {\bullet}^{v_2} \ar@(dl,ul) \ar@(ul,ur) \ar[d] & {\bullet}^{v_3} \ar@(dl,ul) \ar@(ul,ur) \ar[dl] \\
 &  {\bullet}^{v_0} & } 
\]

\vspace{0.2in}

\noindent
Then very similar reasoning shows that in $L_K(E)$ the ideal $I = I(\{v_0\}, \emptyset)$ is a product and intersection of finitely many completely irreducible ideals. 
\end{example}

Our final goal is to classify the Leavitt path algebras in which every proper ideal is a product (and intersection) of finitely many completely irreducible ideals.

\begin{theorem} \label{all-prod-irred} 
The following are equivalent for any Leavitt path algebra $L=L_{K}(E)$.
\begin{enumerate}
\item[$(1)$] Every proper ideal of $L$ is the product of (finitely many) completely irreducible ideals.

\item[$(2)$] Every proper ideal of $L$ is the intersection of finitely many completely irreducible ideals.

\item[$(3)$] The graph $E$ satisfies condition (K), and for every admissible pair $(H,S)$ with $H \neq E^{0}$, $(E \setminus (H,S))^{0}$ is the union of finitely many maximal tails, each satisfying the strong CSP.
\end{enumerate}
\end{theorem}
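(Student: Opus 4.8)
The plan is to prove the equivalences by a short cycle, leaning heavily on Theorem~\ref{Intersection of completely irreducibles} (which characterizes which individual ideals are products/intersections of finitely many completely irreducible ideals) and on Proposition~\ref{Every comp.irred.graded} and Lemma~\ref{gradedprodlemma}. The equivalence of (1) and (2) is immediate from the equivalence of (1) and (2) in Theorem~\ref{Intersection of completely irreducibles}, applied to every proper ideal; so the real content is the equivalence of (1) with (3).

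For (1) $\Rightarrow$ (3): Assume every proper ideal of $L$ is a product of finitely many completely irreducible ideals. First, every completely irreducible ideal is then, trivially, such a product, but more usefully: since every proper ideal is a product (hence intersection, by Theorem~\ref{Intersection of completely irreducibles}) of completely irreducible ideals, and since a finite intersection of ideals satisfying the graded-part conditions is graded precisely when all factors are, I want to force condition (K). The cleanest route is: suppose $E$ fails condition (K); then by Proposition~\ref{Every comp.irred.graded} there is a non-graded prime ideal $P$, and a power $P^{2}$ say. By Theorem~\ref{comp.Irreducible}, $P^{2}$ is itself completely irreducible, so in isolation this is not yet a contradiction — instead I should produce a proper ideal that is \emph{not} a finite product of completely irreducibles. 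Here is where I would invoke Theorem~\ref{Intersection of completely irreducibles}(3): pick an admissible pair $(H,S)$ with $H \neq E^{0}$ such that $(E\setminus(H,S))^{0}$ is \emph{not} a finite union of maximal tails (if no such pair existed we would already be most of the way to (3)); then the graded ideal $I(H,S)$ violates condition (iv) of Theorem~\ref{Intersection of completely irreducibles} (with $n=0$), so it is not a finite product of completely irreducibles, contradicting (1). Thus every admissible pair $(H,S)$ with $H\neq E^{0}$ has $(E\setminus(H,S))^{0}$ a finite union of maximal tails. Applying Theorem~\ref{Intersection of completely irreducibles}(3) again to $I(H,S)$ — which \emph{is} a product of completely irreducibles by (1) — the condition there forces, in the $n=0$ case, that each maximal tail $M_{i}$ in the (irredundant) union satisfies the strong CSP and contains no exit-free cycle, i.e., every cycle with source in $M_i$ has an exit. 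Finally, applying (1) to a suitable non-graded ideal (or simply: having ruled out non-graded primes as above by now forcing condition (K)) completes the verification of (3). I would streamline this by first establishing condition (K) directly: if (K) fails, $L$ has a cycle $c$ without (K) based at some vertex, generating (via Theorem~\ref{specificprime}(3)) a non-graded prime $P$; then the ideal $P^{2}$, or rather an infinite-index situation built from it, can be shown not to be a finite product of completely irreducibles — but actually the slicker argument is that if (K) holds then \emph{all} prime ideals are graded (Proposition~\ref{Every comp.irred.graded}), so I should instead argue: (1) $\Rightarrow$ every ideal of $L$ is graded. Indeed if some ideal were non-graded, then by Theorem~\ref{arbitrary ideals}(1) and Lemma~\ref{prime-gr-part} there is a non-graded prime $P = I(H,B_H) + \langle f(c)\rangle$ with $c$ a cycle without exits in $E\setminus(H,B_H)$ and without (K) in $E$; I then check via Theorem~\ref{Intersection of completely irreducibles}(3) that there is \emph{some} graded ideal whose quotient graph vertex set fails to be a finite union of maximal tails — this needs the non-(K) cycle to be parlayed into an infinite family of hereditary saturated sets — which contradicts (1). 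Granting condition (K), every ideal is graded, and then the maximal-tail/strong-CSP conditions drop straight out of Theorem~\ref{Intersection of completely irreducibles}(3) applied to each graded $I(H,S)$.

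For (3) $\Rightarrow$ (1): Assume (3). Since $E$ satisfies condition (K), every ideal of $L$ is graded by Proposition~\ref{Every comp.irred.graded}, so an arbitrary proper ideal is $I = I(H,S)$ with $H\neq E^{0}$. By (3), $(E\setminus(H,S))^{0} = \bigcup_{i=1}^{m} M_i$ with the $M_i$ maximal tails each satisfying the strong CSP (and, since (K) holds, $E\setminus(H,S)$ satisfies condition (L), so automatically every cycle with source in any $M_i$ has an exit). Dropping redundant $M_i$ makes the union irredundant. Then $I = I(H,S)$ satisfies condition (3)(iii)–(iv) of Theorem~\ref{Intersection of completely irreducibles} with $n=0$, so by that theorem $I$ is a product (and intersection) of finitely many completely irreducible ideals. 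Hence (1) (and (2)) hold.

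The main obstacle is the implication (1) $\Rightarrow$ (3), specifically the step that forces condition (K) — equivalently, that forces every ideal to be graded. The difficulty is that a single power $P^{n}$ of a non-graded prime is itself completely irreducible (Theorem~\ref{comp.Irreducible}), so one cannot get a contradiction from one such ideal; one must instead exhibit a \emph{graded} ideal $I(H,S)$ (with $H\neq E^{0}$) whose quotient graph vertex set is not a finite union of maximal tails, and argue that the existence of a cycle without (K) guarantees such a pathological $(H,S)$. This is plausible because a cycle without (K) typically lets one build an infinite strictly descending (or antichain) family of hereditary saturated subsets, forcing infinitely many maximal tails in some quotient; but making this precise — and ruling out the possibility that all such quotients still happen to be finite unions of maximal tails — is the delicate part and will require a careful graph-theoretic argument, possibly isolating it as a preliminary lemma before the theorem.
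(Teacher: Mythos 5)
Your equivalence of (1) and (2), and your proof of (3) $\Rightarrow$ (1), are correct and essentially identical to the paper's. The gap is in (1) $\Rightarrow$ (3), precisely at the step you yourself flag as delicate: forcing condition (K). Your proposed strategy --- showing that a cycle without (K) produces an admissible pair $(H,S)$ with $(E\setminus(H,S))^{0}$ \emph{not} a finite union of maximal tails --- does not work. Counterexample: let $E$ be a single vertex $v$ with one loop, so $L_K(E)\cong K[x,x^{-1}]$. Condition (K) fails, yet the only relevant quotient-graph vertex set is $\{v\}$ itself, which \emph{is} a single maximal tail satisfying the strong CSP. The actual obstruction in this example is not the number of maximal tails but the clause in condition (iv) of Theorem~\ref{Intersection of completely irreducibles}(3) requiring (in the graded case $n=0$) that every cycle with source in each $M_i$ have an exit: the zero ideal here is a graded prime whose quotient graph fails condition (L), so it is irreducible but not a finite product of completely irreducible ideals. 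Your plan of parlaying the non-(K) cycle into an infinite family of hereditary saturated sets cannot be carried out in general.

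The paper closes this gap with a short argument you did not find: under hypothesis (2), every \emph{irreducible} ideal is in particular a finite intersection of completely irreducible ideals, and an irreducible ideal that equals a finite intersection of ideals containing it must equal one of them; hence every irreducible ideal is completely irreducible. Proposition~\ref{Irredcible = Comp Irred} then yields condition (K) at once (its proof of (2) $\Rightarrow$ (3) is exactly the observation that for a non-graded prime $P = I(H,B_{H})+\langle f(c)\rangle$, the graded prime $I(H,B_{H})$ cannot be completely irreducible, since $E\setminus(H,B_{H})$ fails condition (L)). Once condition (K) is in hand, the maximal-tail and strong-CSP clauses follow exactly as you describe, by applying Theorem~\ref{Intersection of completely irreducibles} to each $I(H,S)$.
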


\begin{proof}
(1) $\Leftrightarrow$ (2). This follows from Theorem~\ref{Intersection of completely irreducibles}.

(2) $\Rightarrow$ (3). Suppose that (2) holds. Then, in particular, every irreducible ideal is the intersection of finitely many completely irreducible ideals, and so must actually be completely irreducible. Thus, by Proposition~\ref{Irredcible = Comp Irred}, the graph E satisfies condition (K).

Now, let $(H,S)$ be an admissible pair, with $H \neq E^{0}$. Then, by hypothesis, $I(H,S)$ is the intersection of finitely many completely irreducible ideals. Thus, by Theorem~\ref{Intersection of completely irreducibles}, $(E \setminus (H,S))^{0}$ is the union of finitely many maximal tails, each satisfying the strong CSP.

(3) $\Rightarrow$ (2). Suppose that (3) holds. Since the graph $E$ satisfies condition (K), every ideal of $L$ is graded, by Proposition~\ref{Every comp.irred.graded}. Now, let $I=I(H,S)$ be an arbitrary proper (graded) ideal of $L$. Since $E$ satisfies condition $(K)$, every cycle in $E \setminus (H,S)$ has an exit. Thus, by Theorem~\ref{Intersection of completely irreducibles}, (3) implies that $I$ is the intersection of finitely many completely irreducible ideals, giving the desired conclusion.
\end{proof}

For graphs with finitely many vertices, all the clauses in condition (3) of Theorem~\ref{all-prod-irred}, except condition (K), are satisfied automatically, giving us the following result.

\begin{corollary}
Let $E$ be a graph such that $E^0$ is finite. Then every proper ideal of $L_K(E)$ is a product of completely irreducible ideals if and only if $E$ satisfies condition (K).
\end{corollary}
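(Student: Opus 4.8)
The plan is to derive the corollary from Theorem~\ref{all-prod-irred} by observing that, once $E^0$ is finite, condition~(3) of that theorem collapses to its first clause, ``$E$ satisfies condition~(K)''. One implication is immediate: if every proper ideal of $L_K(E)$ is a product of completely irreducible ideals, then the implication $(1)\Rightarrow(3)$ of Theorem~\ref{all-prod-irred} already yields that $E$ satisfies condition~(K). So the substance of the corollary is the converse, and for that it suffices to show that \emph{for any graph with finite vertex set}, the portion of condition~(3) of Theorem~\ref{all-prod-irred} asserting that each $(E\setminus(H,S))^0$ is a union of finitely many maximal tails, each with the strong CSP, holds automatically.

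Assuming $E$ satisfies condition~(K), I would fix an admissible pair $(H,S)$ with $H\neq E^0$ and analyse $F=E\setminus(H,S)$. Since $B_H\subseteq E^0$ is finite, $F^0=(E^0\setminus H)\cup\{v'\mid v\in B_H\setminus S\}$ is a finite nonempty set, so $F$ has only finitely many maximal tails; hence it is enough to prove two elementary facts about an arbitrary finite graph $F$: \emph{(a)} every vertex of $F$ lies in some maximal tail of $F$, and \emph{(b)} every maximal tail of $F$ satisfies the strong CSP. For (a): from a vertex $w$ follow out-edges; finiteness of $F^0$ forces the resulting walk eventually to reach a sink $t$, or to revisit a vertex and thereby expose a cycle whose source $x$ is reachable from $w$. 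In the first case $M=\{u\in F^0\mid u\geq t\}$, and in the second $M=\{u\in F^0\mid u\geq x\}$, is a maximal tail of $F$ containing $w$; axioms (MT1) and (MT3) are immediate, and (MT2) follows from the observation that a regular vertex whose every out-edge leaves $M$ cannot reach the base vertex $t$ (resp.\ $x$) --- here one uses that a vertex reaches itself only by the trivial path, so a sink or an infinite emitter causes no difficulty. For (b): a maximal tail $M$ of $F$ is a finite downward directed set, so it contains a vertex $v_0$ with $u\geq v_0$ for all $u\in M$; then $T=\{v_0\}$ witnesses the CSP for $M$, and $v_0$ belongs to every nonempty hereditary saturated subset $N$ of $M$, since any $w\in N$ satisfies $w\geq v_0$. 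Thus $M$ has the strong CSP, which completes the verification of condition~(3); Theorem~\ref{all-prod-irred} then gives that every proper ideal of $L_K(E)$ is a product of completely irreducible ideals.

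The only step calling for genuine, if very mild, care is (a): one must verify the saturation axiom (MT2) for the sets $M$ above while correctly accommodating sinks, infinite emitters, and sources of cycles. I do not anticipate any real obstacle --- all the hard work is already packaged in Theorem~\ref{all-prod-irred}, and the finite case reduces to the two combinatorial observations (a) and (b). It is also worth noting, in passing, that since conditions (1) and (2) of Theorem~\ref{all-prod-irred} are equivalent, the corollary may equivalently be stated with ``intersection of finitely many'' in place of ``product of''.
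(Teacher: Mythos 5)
Your proof is correct and follows the paper's approach exactly: the paper likewise derives the corollary from Theorem~\ref{all-prod-irred} by noting that, for $E^0$ finite, all clauses of condition (3) other than condition (K) hold automatically. The paper states this without proof, whereas you supply the (correct) verifications that in a finite graph every vertex lies in a maximal tail and every maximal tail, being finite and downward directed, has a minimum element witnessing the strong CSP.
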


Let us further illustrate Theorem~\ref{all-prod-irred} by giving an example of a Leavitt path algebra where every proper ideal is a product of completely irreducible ideals, but may not be completely irreducible itself.

\begin{example}
Let $E$ be the following graph (see Example~\ref{div-eg}).
\[\xymatrix{ 
& \ar@{.}[l] & {\bullet}^{v_{-2}} \ar [l] & {\bullet}^{v_{-1}} \ar [l] & {\bullet}^{v_{0}} \ar [r] \ar [l] & {\bullet}^{v_{1}} \ar [r] & {\bullet}^{v_{2}} \ar [r] & \ar@{.}[r] & }
\]
Then $E$ satisfies condition (K), and the hereditary saturated subsets of $E^0$ are precisely $E^0$, $W^+ = \{v_i \mid i \geq 1\}$, $W^- = \{v_i \mid i \leq -1\}$, and $\emptyset$. Now,
\[
(E \setminus (W^+,\emptyset))^{0} = E^0\setminus W^+ = W^-\cup \{v_0\} \text{ and } (E \setminus (W^-,\emptyset))^{0} = E^0\setminus W^- = W^+\cup \{v_0\}
\] 
are maximal tails, each satisfying the strong CSP with respect to itself, and 
\[
(E \setminus (\emptyset,\emptyset))^{0} = E^0 = (W^-\cup \{v_0\}) \cup (W^+\cup \{v_0\}).
\] 
It follows that $E$ satisfies condition (3) in Theorem~\ref{all-prod-irred}. On the other hand, as noted in Example~\ref{div-eg}, $E$ does not satisfy condition (3) in Theorem~\ref{everyidealcompirred}. Thus in $L_K(E)$ every proper ideal is a product of completely irreducible ideals, but not every proper ideal is completely irreducible. 
\end{example}

\subsection*{Acknowledgement}

We are grateful to the referee for a careful reading of the manuscript.

\vspace{.1in}

\noindent
Department of Mathematics, University of Colorado, Colorado Springs, CO, 80918, USA \newline
\noindent 
{\href{mailto:zmesyan@uccs.edu}{zmesyan@uccs.edu}},  {\href{mailto:krangasw@uccs.edu}{krangasw@uccs.edu}}


\begin{thebibliography}{99}                                                                                               

\bibitem{AAS} G.\ Abrams, P.\ Ara, and M.\ Siles Molina, \textit{Leavitt path algebras,}
Lecture Notes in Mathematics \textbf{2191}, Springer-Verlag, London, 2017.

\bibitem{ABR} G.\ Abrams, J.\ Bell, and K.\ M.\ Rangaswamy, \textit{On prime non-primitive von Neumann regular algebras,} Trans.\ Amer.\ Math.\ Soc.\ \textbf{366} (2014) 2375--2392.

\bibitem{AMR} G.\ Abrams, Z.\ Mesyan, and K.\ M.\ Rangaswamy, \textit{Products of ideals in Leavitt path algebras,} Comm.\ Algebra \textbf{48} (2020) 1853--1871.

\bibitem{ARRS} S.\ Aljohani, K.\ Radler, K.\ M.\ Rangaswamy, and A.\ Srivastava,\textit{Variations of primeness and factorizations of ideals in Leavitt path algebras,} preprint.

\bibitem{EEKRR} S.\ Esin, M.\ Kanuni, A.\ Ko\c{c}, K.\ Radler, and K.\ M.\ Rangaswamy, \textit{On Pr\"{u}fer-like properties of Leavitt path algebras,} J.\ Algebra Appl.\ \textbf{19} (2020) 2050122.

\bibitem{EMR} S.\ Esin, M.\ Kanuni, and K.\ M.\ Rangaswamy, \textit{On intersections of
two-sided ideals of Leavitt path algebras,} J.\ Pure Appl.\ Algebra \textbf{221} (2017) 632--644.

\bibitem{FHO-0} L.\ Fuchs, W.\ J.\ Heinzer, and B.\ Olberding, \textit{Commutative ideal
theory without finiteness conditions: Primal ideals,} Trans.\ Amer.\ Math.\ Soc.\
\textbf{357} (2004) 2771--2798.

\bibitem{FHO} L.\ Fuchs, W.\ J.\ Heinzer, and B.\ Olberding, \textit{Commutative ideal
theory without finiteness conditions: Completely irreducible ideals,} Trans.\ Amer.\ Math.\ Soc.\ \textbf{358} (2006) 3113--3131.

\bibitem{FM} L.\ Fuchs and F.\ Mosteig, \textit{Ideal theory in Pr\"{u}fer domains--an
unconventional approach,} J.\ Algebra \textbf{252} (2002) 411--430.

\bibitem{HO} W.\ Heinzer and B.\ Olberding, \textit{Unique irredundant intersections of
completely irreducible ideals,} J.\ Algebra \textbf{287} (2005) 432--448.

\bibitem{H} T.\ van den Hove, \textit{The arithmetic of ideals in Leavitt path algebras,} bachelor's thesis, Ghent University, 2019.

\bibitem{R-1} K.\ M.\ Rangaswamy, \textit{Theory of prime ideals of Leavitt path algebras
over arbitrary graphs,} J.\ Algebra \textbf{375} (2013) 73--96.

\bibitem{R_0} K.\ M.\ Rangaswamy, \textit{On generators of two-sided ideals of Leavitt
path algebras over arbitrary graphs,} Comm.\ Algebra \textbf{42} (2014) 2859--2868.

\bibitem{R-2} K.\ M.\ Rangaswamy, \textit{The multiplicative ideal theory of Leavitt path
algebras,} J.\ Algebra \textbf{487} (2017) 173--199.
\end{thebibliography}
\end{document}